\renewcommand{\geq}{\geqslant}
\renewcommand{\leq}{\leqslant}
\author{{\bf Robert C. Dalang$^*$}  and {\bf Fei Pu\footnote{Research
partially supported by the Swiss National Foundation for Scientific
Research.}}\\
\\
\it\small \'Ecole Polytechnique F\'ed\'erale de Lausanne \\
}
\title{\bf\Large{Optimal lower bounds on hitting probabilities for stochastic heat equations in spatial dimension $k \geq 1$}}
\date{}
\newtheorem{stat}{Statement}[section]
\newtheorem{prop}[stat]{Proposition}
\newtheorem{theorem}[stat]{Theorem}
\newtheorem{lemma}[stat]{Lemma}
\theoremstyle{definition}
\numberwithin{equation}{section}
\begin{document}%\onehalfspacing
\maketitle
\begin{abstract}%\\
We establish a sharp estimate on the negative moments of the smallest eigenvalue of the Malliavin matrix $\gamma_Z$ of $Z := (u(s, y), u(t, x) - u(s, y))$, where $u$ is the solution to system of $d$ non-linear stochastic heat equations in spatial dimension $k \geq 1$. We also obtain the optimal exponents for the $L^p$-modulus of continuity of the increments of the solution and of its Malliavin derivatives. These lead to optimal lower bounds on hitting probabilities of the process $\{u(t, x): (t, x) \in [0, \infty[ \times \mathbb{R}\}$ in the non-Gaussian case in terms of Newtonian capacity, and improve a result in Dalang, Khoshnevisan and Nualart [\textit{Stoch PDE: Anal Comp} \textbf{1} (2013) 94--151].
\end{abstract}
\vskip.5in %\newpage

\noindent{\it \noindent MSC 2010 subject classification:}
Primary: 60H15, 60J45; Secondary: 60H07, 60G60.\\

	\noindent{\it Keywords:}
	Hitting probabilities, systems of non-linear stochastic heat equations, spatially homogeneous Gaussian noise, Malliavin calculus.\\

\noindent{\it Abbreviated title: Optimal lower bounds on hitting probabilities}

\section{Introduction and main results}

Consider the following system of stochastic partial differential equations:
\begin{align}\label{eq2018-03-01-1}
    \frac{\partial}{\partial t}u_i(t, x) = \frac{1}{2}\Delta_xu_i(t, x) + \sum_{j=1}^d\sigma_{ij}(u(t, x))\dot{F}^j(t, x) + b_i(u(t, x)), & \hbox{}
  \end{align}
for $1 \leq i \leq d$, $t \in [0, T]$ and $x \in \mathbb{R}^k$, where $u: = (u_1, \ldots, u_d)$ with initial conditions $u(0, x) = 0$ for all $x \in \mathbb{R}^k$, and the $\Delta_x$ denotes the Laplacian in the spatial variable $x$. The functions $\sigma_{ij}, \, b_i: \mathbb{R}^d \rightarrow \mathbb{R}$ are globally Lipschitz functions, $i, \, j \in \{1, \ldots, d\}$.  We set $b = (b_i)$ and $\sigma = (\sigma_{ij})$.

The noise $\dot{F} = (\dot{F}_1, \ldots, \dot{F}_d)$ is a spatially homogeneous centered Gaussian generalized random field with covariance of the form
\begin{align}\label{eq2018-03-01-2}
\mbox{E}[\dot{F}^i(t, x)\dot{F}^j(s, y)] = \delta(t - s)\|x - y\|^{-\beta}\delta_{ij},
\end{align}
where $\beta \in \, ]0, 2[$, $\delta(\cdot)$ denotes the Dirac delta function, $\delta_{ij}$ the Kronecker symbol and $\|\cdot\|$ is the Euclidean norm. In particular, the $d$-dimensional driving noise $\dot{F}$ is white in time and with a spatially homogeneous covariance given by the Riesz kernel $f(x) = \|x\|^{-\beta}$.

The solution $u$ of \eqref{eq2018-03-01-1} is known to be a $d$-dimensional random field (see the end of this section where precise definitions and references are given). The potential theory for $u$ has been developed by Dalang, Khoshnevisan and Nualart \cite{DKN13}. Fix $T > 0$ and let $I \times J \subset \, ]0, T] \times \mathbb{R}^k$ be a closed non-trivial rectangle.
%(i.e., $I \subset \, ]0, T]$ is a closed non-trivial interval and $J$ is of the form $[a_1, b_1] \times \cdots \times [a_k, b_k]$ where $a_i, \, b_i \in \mathbb{R}$ and $a_i < b_i$, $i = 1, \ldots, k$).
In the case where the noise is additive, i.e., $\sigma \equiv \mbox{Id}$ and $b \equiv 0$, Dalang, Khoshnevisan and Nualart \cite[Theorem 1.5]{DKN13} prove that for fixed $M > 0$, there exists $c > 0$ depending on $I, J$ and $M$ such that for all compact sets $A \subseteq [-M, M]^d$,
\begin{align} \label{eq2018-08-29-10}
\mbox{P}\{u(I \times J) \cap A \neq \emptyset\} \geq c\, \mbox{Cap}_{d - (\frac{4 + 2k}{2 - \beta})}(A),
\end{align}
where $u(I \times J)$ denotes the range of $I \times J$ under the random map $(t, x) \mapsto u(t, x)$, and $\mbox{Cap}_{\beta}$ denotes the capacity with respect to the Newtonian $\beta$-kernel (we refer to \cite[Section 1]{DKN13} for the definition of capacity).
 If the noise is multiplicative, i.e., $\sigma$ and $b$ are not constants (but are sufficiently regular), then using techniques of Malliavin calculus, Dalang, Khoshnevisan and Nualart \cite[Theorem 1.2(b)]{DKN13} prove that for fixed $M > 0$ and $\eta > 0$, there exists $c > 0$ depending on $I, J, M$ and $\eta$ such that for all compact sets $A \subseteq [-M, M]^d$,
\begin{align} \label{eq2018-08-29-1}
\mbox{P}\{u(I \times J) \cap A \neq \emptyset\} \geq c\, \mbox{Cap}_{d - (\frac{4 + 2k}{2 - \beta}) + \eta}(A).
\end{align}

For systems of linear and/or non-linear stochastic heat equations in spatial dimension $1$ driven by a $d$-dimensional space-time white noise, this type of question was studied in Dalang, Khoshnevisan and Nualart \cite{DKN07} and \cite{DKN09}, in which the lower bounds on hitting probabilities in the Gaussian case and non-Gaussian case are not consistent. This gap has been filled recently by Dalang and Pu \cite{DaP18a}, in which we have obtained the optimal lower bounds on hitting probabilities for systems of non-linear stochastic heat equations in spatial dimension $1$.

The aim of this paper is to remove the $\eta$ in the dimension of capacity in \eqref{eq2018-08-29-1}, so that we obtain the optimal lower bounds on hitting probabilities for systems of non-linear stochastic heat equations in higher spatial dimension.

In \cite{DKN13}, the lower bound on the hitting probability in \eqref{eq2018-08-29-1} follows from the properties of the probability density function of the solution (see \cite[Theorems 1.6 and 1.8]{DKN13}), in particular, the upper bound on the joint probability density function (denoted by $p_{Z}(\cdot, \cdot)$ of the random vector $Z:=(u(s, y), u(t, x) - u(s, y))$. In \cite[Corollary 5.10]{DKN13}, the formula for the density function $p_{Z}(\cdot, \cdot)$) is given in terms of the Malliavin derivative and the Skorohod integral (we refer to Section \ref{section2018-09-21-3} for the elements of Malliavin calculus). From this formula, in order to establish a upper bound on the density function $p_{Z}(\cdot, \cdot)$, the main effort is to analyze the $L^p$-modulus of continuity of the increments of the solution (see \cite[(2.6)]{DKN13}) and of the Malliavin derivative of the increments of the solution (see \cite[Proposition 5.1]{DKN13}), and the negative moments of the smallest eigenvalue of the Malliavin matrix $\gamma_Z$ of $Z$ (see \cite[Proposition 5.6]{DKN13}). We point out that the estimates in \cite[(2.6), Propositions 5.1 and 5.6]{DKN13} are not sharp, and that is why the extra term $\eta$ appears in \eqref{eq2018-08-29-1}.

We first look at the $L^p$-modulus of continuity of the increments of the solution. H\"{o}lder continuity for the solution to stochastic heat equation with spatially correlated noise has been studied by many authors; see, for example,  \cite{HNS13, Lik17, SaS02}. Sanz-Sol\'{e} and Sarr\`{a} \cite{SaS02} use the factorization method to study the H\"{o}lder continuity for the solution to \eqref{eq2018-03-01-1} (with $d = 1$), when the initial condition is bounded and $\rho$-H\"{o}lder continuous  for some $\rho \in \,]0, 1[$, and the spatial covariance of the noise $\dot{F}$ is the Fourier transform of a tempered measure $\mu$ on $\mathbb{R}^k$. In particular, \cite[Theorem 2.1]{SaS02} shows that, if the measure $\mu$ satisfies the condition
\begin{align*}
\int_{\mathbb{R}^k}\frac{\mu{(d\xi)}}{(1 + \|\xi\|^2)^{\eta}} < \infty, \quad \mbox{for some}\quad \eta \in \,]0, 1[,
\end{align*}
then for any $p \geq 2$ and $\gamma \in \, ]0, \rho \wedge(1 - \eta)[$, there exists $C(p, T) >0$ such that for all $(t, x)$, $(s, y) \in [0, T] \times \mathbb{R}^k$,
\begin{align*}
\mbox{E}\left[|u(t, x) - u(s, y)|^p\right] \leq C(p, T)\left(|t - s|^{\gamma/2} + \|x - y\|^{\gamma}\right)^{p};
\end{align*}
see \cite[(10) and (19)]{SaS02}. In the case where $f = \mathscr{F}\mu$ is the Riesz kernel $f(x) = \|x\|^{-\beta}$ and the initial value vanishes, this result of Sanz-Sol\'{e} and Sarr\`{a} becomes: for any $\gamma \in \, ]0, \frac{2 - \beta}{2}[$,
\begin{align} \label{eq2018-11-22-01}
\mbox{E}\left[|u(t, x) - u(s, y)|^p\right] \leq C(p, T)\left(|t - s|^{\gamma/2} + \|x - y\|^{\gamma}\right)^{p}
\end{align}
for all $(t, x)$, $(s, y) \in [0, T] \times \mathbb{R}^k$. Note that the right endpoint $\gamma = \frac{2 - \beta}{2}$ is excluded.

Li \cite{Lik17} has studied the H\"{o}lder continuity for stochastic fractional heat equations without drift in the case where the Gaussian noise is white in time and colored in space with covariance of the form \eqref{eq2018-03-01-2}. Based on some estimates of the fractional heat kernel, \cite[Theorems 1, 2 and 3]{Lik17} obtains spatial and temporal $L^p$-H\"{o}lder continuity of the solution to stochastic fractional heat equation. In these results, the exponent in time is optimal while the exponent in space is not (\cite[Remark 2]{Lik17}).

The first contribution of this paper is the following sharp estimate of the $L^p$-H\"{o}lder continuity for the solution to \eqref{eq2018-03-01-1}, improving \eqref{eq2018-11-22-01}. We have the following.

\begin{theorem}\label{prop2018-09-10-1}
Assume that $\sigma_{ij}$ and $b_i$ are globally Lipschitz continuous. There exists a constant $C_{p, T} > 0$ such that for all $s, t \in [0, T]$, $s \leq t$, $x, y \in \mathbb{R}^k$, $p \geq 2$,
\begin{align}\label{eq2018-09-10-10}
\mathrm{E}\left[\|u(t, x) - u(s, y)\|^p\right] \leq C_{p, T}\left(|t - s|^{(2 - \beta)/2} + \|x - y\|^{2 - \beta}\right)^{p/2}.
\end{align}
\end{theorem}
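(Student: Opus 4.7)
The plan is to work from the mild formulation
\begin{align*}
u_i(t,x) = \sum_{j=1}^d \int_0^t\!\!\int_{\R^k} G(t-r,x-z)\sigma_{ij}(u(r,z))\,F^j(\d r,\d z) + \int_0^t\!\!\int_{\R^k} G(t-r,x-z)b_i(u(r,z))\,\d z\,\d r
\end{align*}
(with $G$ the Gaussian heat kernel), to decompose $u(t,x)-u(s,y)=[u(t,x)-u(s,x)]+[u(s,x)-u(s,y)]$, and to treat the temporal and spatial increments separately. Each such increment further splits into a Walsh--Dalang stochastic integral and a Lebesgue drift integral; the drift part is easy to control using H\"older's inequality on the $\d r\,\d z$-integral together with the standard uniform moment bound $\sup_{(r,z)\in[0,T]\times\R^k}\E[\|u(r,z)\|^p]<\infty$, and contributes strictly higher powers of $|t-s|$ and $\|x-y\|$ than the stochastic part.

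For the stochastic parts, I would apply Burkholder--Davis--Gundy together with the covariance of the spatially homogeneous Walsh--Dalang integral, use Minkowski's integral inequality to bring the $p/2$-power past the outer $\d r$-integration, and Cauchy--Schwarz on $\sigma(u(r,z))\sigma(u(r,z'))$, which extracts $\sup_{r,z}\E[\|\sigma(u(r,z))\|^p]^{2/p}<\infty$ and reduces the problem to two \emph{deterministic} Fourier integrals (via Parseval and the Riesz-kernel pair $\mathscr{F}(\|\cdot\|^{-\beta})(\xi)=c_{k,\beta}\|\xi\|^{\beta-k}$). The spatial integral is
\begin{align*}
\mathrm{I}_{\mathrm{sp}} = 2\int_{\R^k}\frac{1-e^{-s\|\xi\|^2}}{\|\xi\|^2}\bigl(1-\cos(\xi\cdot(x-y))\bigr)\|\xi\|^{\beta-k}\,\d\xi,
\end{align*}
which, splitting at $\|\xi\|=1/\|x-y\|$ and using $1-\cos a\leq a^2/2$ on low frequencies together with $(1-e^{-u})/u\leq 1/u$ on high frequencies, is bounded by $C\|x-y\|^{2-\beta}$. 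The temporal integral
\begin{align*}
\mathrm{I}_{\mathrm{t}} = \int_0^s\!\!\int_{\R^k}\bigl[e^{-(t-r)\|\xi\|^2/2}-e^{-(s-r)\|\xi\|^2/2}\bigr]^2\|\xi\|^{\beta-k}\,\d\xi\,\d r + \int_s^t\!\!\int_{\R^k} e^{-(t-r)\|\xi\|^2}\|\xi\|^{\beta-k}\,\d\xi\,\d r
\end{align*}
yields $\mathrm{I}_{\mathrm{t}}\leq C|t-s|^{(2-\beta)/2}$ by the change of variables $\xi\mapsto\xi/\sqrt{t-s}$ in both pieces.

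The main obstacle is reaching the \emph{endpoint} exponent $(2-\beta)/2$ rather than any $\gamma<(2-\beta)/2$: the factorization method used in \cite{SaS02} to derive \eqref{eq2018-11-22-01} loses a positive $\epsilon$ at an internal H\"older step, which is precisely why $\gamma=(2-\beta)/2$ is excluded there. The direct BDG--Parseval--scaling route above bypasses any such loss. The remaining care lies in legitimizing the Cauchy--Schwarz step passing from the random bracket process to the deterministic Fourier form (which uses the positive-definiteness of the Riesz kernel to absorb the random factor $\sigma(u)$), and in verifying that $C_{p,T}$ is uniform in $(s,t,x,y)\in[0,T]\times\R^k$, which follows from translation invariance in space and the compactness of the time interval $[0,T]$.
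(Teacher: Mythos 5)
Your overall strategy — mild formulation, split into temporal and spatial increments, Burkholder--Davis--Gundy for the stochastic parts, Minkowski and Cauchy--Schwarz to extract $\sup_{r,z}\E[\|\sigma(u(r,z))\|^p]$, and Gronwall-free estimates on the heat kernel — is the right one, and your Fourier computations of $\mathrm{I}_{\mathrm{sp}}$ and $\mathrm{I}_{\mathrm{t}}$ are both correct and do yield the sharp exponents $\|x-y\|^{2-\beta}$ and $|t-s|^{(2-\beta)/2}$. But there is a genuine gap precisely at the step you flag as needing care. After Burkholder you are left with $\E\bigl[\bigl(\int_0^s\|\psi_r\|_{\mathscr{H}}^2\,\d r\bigr)^{p/2}\bigr]$ with $\psi_r(z)=\sigma(u(s-r,z))\,\Delta S_r(z)$. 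To push the $p/2$-th moment inside via Minkowski you need the underlying measure on $(r,z,v)$ to be positive, so you are forced to replace $\|z-v\|^{-\beta}\Delta S_r(z)\Delta S_r(v)$ by $\|z-v\|^{-\beta}|\Delta S_r(z)|\,|\Delta S_r(v)|$. Minkowski and Cauchy--Schwarz then reduce the problem to the \emph{absolute-value} integral
\begin{align*}
\int_0^s\!\d r\int_{\R^k}\!\int_{\R^k}\|z-v\|^{-\beta}\,|\Delta S_r(z)|\,|\Delta S_r(v)|\,\d z\,\d v,
\end{align*}
not to the signed one. Your $\mathrm{I}_{\mathrm{sp}}$ is, by Parseval, the \emph{signed} quantity $\int_0^s\!\d r\iint\|z-v\|^{-\beta}\Delta S_r(z)\Delta S_r(v)\,\d z\,\d v$, which (since the Riesz kernel is a nonnegative function) is a \emph{lower} bound for what must actually be estimated. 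The positive-definiteness of $\|\cdot\|^{-\beta}$ does not let you ``absorb'' the random factor $\sigma(u)$ and land back in the signed/Fourier form: multiplication by a bounded (or moment-bounded) function does not control the negative-order homogeneous Sobolev norm $\|\cdot\|_{\mathscr{H}}$, so one cannot pull $\sigma(u)$ out and retain Parseval. Your route goes through cleanly only in the additive case $\sigma\equiv\mathrm{const}$, where $\psi_r$ is a deterministic multiple of $\Delta S_r$.

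The paper resolves exactly this by working in physical space. After introducing absolute values, it bounds the two deterministic integrals above by $C\|x-y\|^{2-\beta}$ and $C|t-s|^{(2-\beta)/2}$ via Lemma~\ref{lemma2018-09-10-4} and Lemma~\ref{lemma2018-09-10-5} respectively, which in turn rest on the real-space kernel estimates in Lemmas~\ref{lemma2018-09-10-1}--\ref{lemma2018-09-10-3} (mean-value theorem bound $|S(t,x)-S(t,y)|\lesssim \|x-y\|t^{-(k+1)/2}(e^{-\|x\|^2/(m_0 t)}+e^{-\|y\|^2/(m_0 t)})$, an $L^1$-in-space increment bound, and a nonnegative-definiteness argument to reduce a sup over translates to the value at the origin). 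To complete your proof you would need to prove the absolute-value versions of $\mathrm{I}_{\mathrm{sp}}$ and $\mathrm{I}_{\mathrm{t}}$, and for that you would have to return to real-space estimates of the type in Lemmas~\ref{lemma2018-09-10-3}--\ref{lemma2018-09-10-5}; the Fourier-side scaling argument by itself does not suffice. Your treatment of the drift terms (and of $C_{p,T}$'s uniformity) is fine and essentially matches the paper's, which uses the estimates of Saint Loubert Bi\'e for the spatial and temporal drift increments.
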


We also need the $L^p$-H\"{o}lder continuity for the Malliavin derivative of the solution to \eqref{eq2018-03-01-1}. We consider the following hypotheses on the coefficients of the system \eqref{eq2018-03-01-1}:
\begin{enumerate}
  \item [\textbf{P1}] The functions $\sigma_{ij}$ and $b_i$ are infinitely differentiable with bounded partial derivatives of all positive orders, and the $\sigma_{ij}$ are bounded, for $1 \leq i, j \leq d$.
  \item [\textbf{P2}] The matrix $\sigma$ is uniformly elliptic, that is, $\|\sigma(x)\xi\|^2 \geq \rho^2 > 0$ for some $\rho > 0$, for all $x \in \mathbb{R}^d, \|\xi\| = 1$.
\end{enumerate}

Analogous to Theorem \ref{prop2018-09-10-1}, we have the following  sharp estimate of the $L^p$-H\"{o}lder continuity for the Malliavin derivative of the solution to \eqref{eq2018-03-01-1}, which is an improvement of \cite[Proposition 5.1]{DKN13}.
\begin{theorem}\label{prop2018-09-11-1}
Assume \textbf{P1}. Then for any $T > 0$ and $p \geq 2$, there exists a constant $C := C_{p, T} > 0$ such that for any $0 \leq s\leq t\leq T$, $x, y \in \mathbb{R}^k$, $m \geq 1$  and $i \in \{1, \ldots, d\}$,
\begin{align}\label{eq2018-09-11-1}
\|D^m(u_i(t, x) - u_i(s, y))\|_{L^p(\Omega; (\mathscr{H}_T^d)^{\otimes m})} \leq C\big(|t - s|^{\frac{2 - \beta}{4}} + \|x - y\|^{\frac{2 - \beta}{2}}\big).
\end{align}
\end{theorem}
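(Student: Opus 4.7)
The plan is to prove \eqref{eq2018-09-11-1} by induction on $m$, mirroring the Fourier-analytic strategy behind Theorem~\ref{prop2018-09-10-1} but applied to the integral equation for $D^m u$ rather than for $u$ itself. For $m=1$, differentiating the mild formulation of \eqref{eq2018-03-01-1} gives, for $r<t$,
\begin{align*}
D_{r,z}^{(j)} u_i(t,x) &= G(t-r, x-z)\, \sigma_{ij}(u(r,z))\\
&\quad + \sum_{l,n} \int_r^t\!\int_{\mathbb{R}^k} G(t-\tau, x-w)\, \partial_n\sigma_{il}(u(\tau,w))\, D_{r,z}^{(j)} u_n(\tau,w)\, F^l(d\tau,dw) + (\text{drift}),
\end{align*}
and iterating produces for $m\ge 2$ an analogous equation in which $D^m u$ is a finite sum of ``initial'' products of heat kernels and bounded derivatives of $\sigma$ at $u$, plus a stochastic integral whose integrand is polynomial in $D^j u$ for $j \le m$. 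Split
\[
D^m(u_i(t,x) - u_i(s,y)) = [D^m u_i(t,x) - D^m u_i(s,x)] + [D^m u_i(s,x) - D^m u_i(s,y)]
\]
and treat the temporal and spatial increments separately.

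For each constituent term, the increment decomposes into contributions of two types: (i) those proportional to a single heat-kernel increment --- either $G(t-\tau,x-w) - G(s-\tau,x-w)$ or $G(\tau,x-w) - G(\tau,y-w)$ --- multiplied by bounded factors and lower-order Malliavin derivatives; and (ii) stochastic integrals over $[s,t]\times\mathbb{R}^k$ against $G(t-\tau,x-w)$ times polynomials in $D^j u$, $j \le m$. Taking the $(\mathscr{H}_T^d)^{\otimes m}$-norm, applying the Hilbert-space Burkholder-Davis-Gundy inequality, and invoking the uniform $L^p(\Omega)$-bounds on $\|D^j u\|_{(\mathscr{H}_T^d)^{\otimes j}}$ for $j\le m$ (standard consequences of \textbf{P1} via Gronwall applied to the equation for $D^j u$) together with the induction hypothesis, reduces the whole matter to a small family of scalar Fourier integrals. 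By Plancherel these take the form
\begin{align*}
&\int_0^s d\tau \int_{\mathbb{R}^k} d\xi\, \bigl|\mathcal{F}G(t-\tau,\cdot)(\xi) - \mathcal{F}G(s-\tau,\cdot)(\xi)\bigr|^2\, \|\xi\|^{\beta-k},\\
&\int_s^t d\tau \int_{\mathbb{R}^k} d\xi\, |\mathcal{F}G(t-\tau,\cdot)(\xi)|^2\, \|\xi\|^{\beta-k},\\
&\int_0^s d\tau \int_{\mathbb{R}^k} d\xi\, |\mathcal{F}G(s-\tau,\cdot)(\xi)|^2\, |1 - e^{i\xi\cdot(x-y)}|^2\, \|\xi\|^{\beta-k}.
\end{align*}
Using $\mathcal{F}G(t,\cdot)(\xi) = \exp(-t\|\xi\|^2/2)$, a direct calculation bounds the first two by $C|t-s|^{(2-\beta)/2}$ and the third by $C\|x-y\|^{2-\beta}$. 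Passing from the squared Hilbert-space norm to the $L^p(\Omega)$-norm extracts the square root and produces the desired exponents $(2-\beta)/4$ in time and $(2-\beta)/2$ in space.

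The principal obstacle is the combinatorial expansion of $D^m u$ for general $m$: a Fa\`{a}~di~Bruno-type formula produces a sum whose number of terms grows rapidly with $m$, and every term must be shown to satisfy the same H\"older bound. Fortunately each such term has the same analytic backbone --- a single heat-kernel factor carrying the outer variables $(t,x)$, whose increment supplies the H\"older exponent --- so the three Fourier estimates above apply uniformly, with bounded multiplicative constants coming from \textbf{P1} and from the induction hypothesis applied to the lower-order Malliavin derivatives. The only place where $D^m u$ itself reappears in the bound is inside a type-(ii) integrand, and this is handled once and for all by the Gronwall step used to establish the uniform $L^p(\Omega)$-bound on $\|D^m u(\tau,\cdot)\|_{(\mathscr{H}_T^d)^{\otimes m}}$, which then feeds into the increment estimate to close the induction.
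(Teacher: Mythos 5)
Your overall strategy is the same as the paper's: induct on $m$, split the increment as $D^m(u_i(t,x)-u_i(s,y)) = [D^m u_i(t,x) - D^m u_i(s,x)] + [D^m u_i(s,x) - D^m u_i(s,y)]$, insert the stochastic PDE for the iterated derivative, apply Burkholder's inequality for Hilbert-space-valued martingales together with the uniform bounds \eqref{eq2018-09-10-2}, and estimate the resulting scalar kernel integrals. Your three Fourier integrals are correct: using $\mathscr{F}S(t,\cdot)(\xi) = e^{-t\|\xi\|^2/2}$ and $\min(1,\theta)$-type bounds on $(1-e^{-\theta})^2$ and $|1-e^{i\theta}|^2$, one does obtain $C|t-s|^{(2-\beta)/2}$ and $C\|x-y\|^{2-\beta}$. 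Where you differ from the paper is cosmetic: the paper bounds the temporal and spatial kernel-increment integrals in real space, via Lemmas~\ref{lemma2018-09-10-5} and \ref{lemma2018-09-10-4} respectively, while using Fourier only for the undifferenced kernel; your purely Fourier route is an equivalent and somewhat cleaner way to get the same constants. Likewise, for $m>1$ the paper invokes the SPDE satisfied by $D^m u$ from Nualart--Quer-Sardanyons rather than a Fa\`a di Bruno expansion; the two bookkeeping devices are interchangeable.

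The one genuine gap is the drift. The terms arising from $b_i$ are ordinary Lebesgue integrals in $(\theta,\eta)$ against the heat kernel and its increments (the paper's $A_{3,1}, A_{3,2}, A_{3,3}$); Burkholder and Plancherel do not act on them, and they do not produce any of your three Fourier integrals. Instead they reduce to $L^1(\mathbb{R}^k)$-type bounds on the kernel increments, namely $\int_0^s\!\int_{\mathbb{R}^k}|S(t-\theta,x-\eta) - S(t-\theta,y-\eta)|\,d\eta\,d\theta \leq C\|x-y\|$ and $\int_0^s\!\int_{\mathbb{R}^k}|S(t-\theta,y-\eta) - S(s-\theta,y-\eta)|\,d\eta\,d\theta \leq C|(t-s)\log(t-s)|$, which the paper takes from Saint Loubert Bi\'e's Lemmes A2 and A3. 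These are stronger than the noise-term bounds once $t-s$ and $\|x-y\|$ are small, so the theorem's conclusion is not in danger, but your claim that the Burkholder step ``reduces the whole matter to a small family of scalar Fourier integrals'' overlooks them; a separate $L^1$-kernel argument is needed for completeness. (You also need the preliminary reduction, via \eqref{eq2018-09-10-2}, to the case where $t-s$ and $\|x-y\|$ are small, since the drift bound $\|x-y\|^p$ does not dominate $\|x-y\|^{(2-\beta)p/2}$ for large $\|x-y\|$.)
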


Theorems \ref{prop2018-09-10-1} and \ref{prop2018-09-11-1} are proved in Section \ref{section2018-09-20-1}.
Turning to the negative moments of the smallest eigenvalue of the Malliavin matrix $\gamma_Z$, we have the following sharp estimate, which is an improvement of \cite[Proposition 5.6]{DKN13}.

\begin{theorem}\label{prop2017-09-19-4000}
 Assume \textbf{P1} and \textbf{P2}. Fix $T > 0$ and let $I \times J \subset \, ]0, T] \times \mathbb{R}^k$ be a closed non-trivial rectangle. There exists $C > 0$ depending on $T$ such that for all $s, t \in I$, $0 \leq t - s < 1$, $x, y \in J$, $(s, y) \neq (t, x)$, and $p > 1$,
      \begin{align}\label{eq2017-09-19-5}
      \mathrm{E}\Bigg[\bigg(\inf_{\scriptsize\begin{array}{c} \xi = (\lambda, \mu) \in \mathbb{R}^{2d}:\\ \|\lambda\|^2 + \|\mu\|^2 = 1 \end{array}}\xi^T\gamma_Z\xi\bigg)^{-2dp}\Bigg] &\leq C(|t - s|^{\frac{2 - \beta}{2}} + \|x - y\|^{2 - \beta})^{-2dp}.
            \end{align}
\end{theorem}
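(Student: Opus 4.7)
The plan is to establish, with sufficiently high probability, the uniform lower bound
\[
\inf_{\|\xi\|=1} \xi^T \gamma_Z \xi \;\geq\; c\,\varepsilon,\qquad \varepsilon := |t-s|^{\frac{2-\beta}{2}} + \|x-y\|^{2-\beta},
\]
and then integrate the resulting tail estimate to obtain \eqref{eq2017-09-19-5}. Writing $\xi=(\lambda,\mu)\in\mathbb R^{2d}$, the quadratic form of interest is
\[
\xi^T\gamma_Z\xi = \|\lambda\cdot Du(s,y) + \mu\cdot D(u(t,x)-u(s,y))\|_{\mathscr{H}_T^d}^2,
\]
and the strategy follows the broad outline of \cite[Proposition~5.6]{DKN13} but sharpens both the deterministic lower bound and the control of the stochastic remainder, thereby removing the $\eta>0$ that appears in \eqref{eq2018-08-29-1}.

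The concrete steps are as follows. First, I would use the integral equation $D_{r,z}u(v,w) = G(v-r,w-z)\sigma(u(r,z)) + a(r,z;v,w)$ for $(v,w)\in\{(s,y),(t,x)\}$, with a stochastic remainder $a$, and apply the ellipticity hypothesis $\sigma\sigma^T\geq\rho^2 I$ to the leading term. Restricting the $\mathscr{H}_T^d$-norm to the time slab $[s-\delta,t]$ with $\delta\asymp|t-s|+\|x-y\|^2$ gives the pointwise lower bound
\[
\xi^T\gamma_Z\xi \;\geq\; \rho^2\, I(\xi,\delta) - \mathrm{Rem}(\xi,\delta),
\]
where $I(\xi,\delta)$ is the deterministic quantity obtained by replacing each $D_{r,z}u(v,w)$ by its heat-kernel leading term, and $\mathrm{Rem}$ collects the random error contributed by $a$. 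Via Plancherel's theorem and the Fourier representation of the Riesz kernel, the deterministic term becomes
\[
I(\xi,\delta) = c_{k,\beta}\int_{\mathbb R^k}\|\eta\|^{\beta-k}\int_{s-\delta}^t\bigl|(\lambda-\mu)e^{-i\eta\cdot y}e^{-(s-r)\|\eta\|^2/2}\mathbf 1_{r<s}+\mu e^{-i\eta\cdot x}e^{-(t-r)\|\eta\|^2/2}\bigr|^2\,dr\,d\eta.
\]
The goal is to prove $\inf_{\|\xi\|=1}I(\xi,\delta)\geq c\varepsilon$, which I would establish by case analysis. In the temporally-dominated regime $|t-s|^{(2-\beta)/2}\geq\|x-y\|^{2-\beta}$ the contribution from $r\in[s,t]$ alone yields at least $c\|\mu\|^2|t-s|^{(2-\beta)/2}$; in the spatially-dominated regime the contribution from $r\in[s-\delta,s]$ is handled by splitting the frequency integral at $\|\eta\|\sim 1/\|x-y\|$; and the case $\|\lambda\|\gtrsim\|\mu\|$ is disposed of by $\|\lambda\cdot Du(s,y)\|^2\geq c\|\lambda\|^2$, which already exceeds $c\varepsilon$. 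The stochastic remainder $\mathrm{Rem}(\xi,\delta)$ is then controlled via the sharp moduli of continuity in Theorems~\ref{prop2018-09-10-1} and \ref{prop2018-09-11-1} combined with Burkholder--Davis--Gundy-type estimates, giving $\mathrm{E}[|\mathrm{Rem}(\xi,\delta)|^p]\leq C_p\delta^{p\alpha}\varepsilon^p$ for some $\alpha>0$ and every $p\geq 1$. A union bound over a finite $\varepsilon$-net on the sphere $\{\|\xi\|=1\}\subset\mathbb R^{2d}$, exploiting the Lipschitz dependence of $\xi^T\gamma_Z\xi$ on $\xi$, then yields $\mathrm{P}(\inf_{\|\xi\|=1}\xi^T\gamma_Z\xi<c\varepsilon/2)\leq C_N\varepsilon^N$ for every $N\geq 1$, and integrating this tail produces the claimed $(2dp)$-th moment bound.

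The principal obstacle is the sharp deterministic estimate on $I(\xi,\delta)$ in the spatially-dominated regime. When $(t,x)$ is close to $(s,y)$, the kernels $G(s-r,y-\cdot)$ and $G(t-r,x-\cdot)$ are nearly parallel in $\mathscr{H}$, so the combination $(\lambda-\mu)G_s+\mu G_t$ can nearly cancel pointwise. The choice $\delta=\|x-y\|^2\wedge 1$ is dictated by the fact that on this interval both heat kernels localize onto a spatial ball of radius $\|x-y\|$; the subsequent high/low-frequency split in the Fourier integrand is what permits the extraction of the sharp exponent $\|x-y\|^{2-\beta}$ rather than $\|x-y\|^{2-\beta-\eta}$. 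Matching this deterministic bound with the $L^p$-control of $\mathrm{Rem}$ so that the final tail estimate carries the correct exponent $2dp$ for arbitrarily large $p$ constitutes the technical heart of the argument.
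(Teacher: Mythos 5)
Your outline captures the right \emph{shape} of the argument — split the quadratic form into a dominant deterministic piece plus a stochastic remainder, then translate the resulting probabilistic estimate into a bound on negative moments — but it has a genuine gap at the final step, and it leaves the crux of the remainder bound undiscussed.

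The gap is in passing from a tail bound at a single scale to a bound on $\mathrm{E}[X^{-2dp}]$, where $X := \inf_{\|\xi\|=1}\xi^T\gamma_Z\xi$. You fix the slab width $\delta\asymp|t-s|+\|x-y\|^2$ once and for all and conclude $\mathrm{P}(X<c\varepsilon/2)\leq C_N\varepsilon^N$ for every $N$, with $\varepsilon=|t-s|^{(2-\beta)/2}+\|x-y\|^{2-\beta}$. This single-scale tail bound does not control $\mathrm{E}[X^{-2dp}]$: to integrate you need $\mathrm{P}(X<\tau)\leq C_N(\tau/\varepsilon)^N$ for \emph{all} $\tau\leq c\varepsilon$, i.e.\ a family of estimates indexed by a slab width $\epsilon$ that ranges over the entire interval $(0,\delta]$. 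This is precisely the setup of \cite[Proposition~3.5]{DKN09}, which the paper invokes. To make that proposition apply with the sharp exponent, one must show that for every $\epsilon$ below the critical width, $X\geq c\epsilon^{(2-\beta)/2}-Z_\epsilon$ with $\mathrm{E}[|Z_\epsilon|^q]\leq c\epsilon^{\alpha_1 q}$ for some fixed $\alpha_1>\frac{2-\beta}{2}$. Achieving that strict inequality uniformly in $\epsilon$ is the hard part: at scales $\epsilon\asymp\delta$ (the critical scale), the contributions $J_1^{(3)}$ and $a_i(l,r,t,x)$ from the ``wrong'' time slab $[s-\epsilon,s]$ viewed at the later time $t$ are only $O(\epsilon^{(2-\beta)/2})$ and do not beat the main term. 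The paper resolves this by introducing an auxiliary exponent $\gamma_0\in\,]\tfrac12,1[$ and splitting into $\epsilon<\delta(t-s)^{1/\gamma_0}$ versus $\delta(t-s)^{1/\gamma_0}\leq\epsilon<\delta(t-s)$, together with the refined remainder estimate of Lemma~\ref{lemma2018-09-12-1} (which gives $\epsilon^{\min\{\frac{2-\beta}{2}(1+\gamma_0),\,1-\beta\gamma_0/2\}q}$, strictly better than $\epsilon^{\frac{2-\beta}{2}q}$); in the near-critical regime it also uses a carefully tuned truncation parameter $\theta_0$ and a choice of $\delta$ to absorb the cross term $\tilde B_4$ into the main term. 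Your proposed estimate $\mathrm{E}[|\mathrm{Rem}(\xi,\delta)|^p]\leq C_p\delta^{p\alpha}\varepsilon^p$ ``for some $\alpha>0$'' is exactly what is needed, but you offer no mechanism for proving it; without the $\gamma_0$-refinement it does not hold for $\epsilon$ near $\delta$, and without it one is back to the $\eta$-loss of \cite{DKN13}.

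Two smaller points. Your deterministic lower bound on $I(\xi,\delta)$ via the Fourier representation and a high/low frequency split at $\|\eta\|\sim1/\|x-y\|$ is a genuinely different route from the paper's ``subtract-and-add a local term'' technique followed by \cite[Lemma~6.1]{DKN13}; this could plausibly work and is an interesting alternative, but it would need to be carried out in detail. Second, the $\varepsilon$-net/union bound over the sphere is unnecessary: $\xi\mapsto\xi^T\gamma_Z\xi$ is quadratic so the infimum can be handled deterministically after bounding $\sup_{\|\xi\|=1}$ of the error terms (as in \eqref{eq2018-09-13-16}, \eqref{eq2018-09-13-160}), and \cite[Proposition~3.5]{DKN09} works directly with the supremum of the error rather than with a net.
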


Theorem \ref{prop2017-09-19-4000} is proved in Section \ref{section2018-09-21-1}.
Using Theorems \ref{prop2018-09-10-1}, \ref{prop2018-09-11-1} and \ref{prop2017-09-19-4000} and some results of \cite{DKN13}, we establish a sharp upper bound on the joint probability density function (denoted by $p_{s, y; t, x}(\cdot, \cdot)$) of the random vector $(u(s, y), u(t, x))$  and the optimal lower bounds on hitting probabilities of the solution to \eqref{eq2018-03-01-1}.
\begin{theorem}\label{theorem2017-08-17-1}
Assume \textbf{P1} and \textbf{P2}. Fix $T > 0$ and let  $I \times J \subset \, ]0, T] \times \mathbb{R}^k$ be a closed non-trivial rectangle.
There exists $c > 0$ such that for all $s, t \in I$, $x, y \in J$ with $(s, y) \neq (t, x)$, $z_1, z_2 \in \mathbb{R}^d$ and $p \geq 1$,
\begin{align} \label{eq2017-11-20-1}
p_{s, y; t, x}(z_1, z_2) &\leq c(|t - s|^{\frac{2 - \beta}{2}} + \|x - y\|^{2 - \beta})^{-\frac{d}{2}}\left[\frac{(|t - s|^{\frac{2 - \beta}{2}} + \|x - y\|^{2 - \beta})^2}{\|z_1 - z_2\|^2} \wedge 1\right]^{p/(2d)}.
\end{align}
\end{theorem}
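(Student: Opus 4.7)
The plan is to follow the Malliavin-calculus strategy of \cite[Proposition 5.8]{DKN13}, substituting our sharp input estimates from Theorems \ref{prop2018-09-10-1}, \ref{prop2018-09-11-1} and \ref{prop2017-09-19-4000} for the less sharp auxiliary bounds used there. Set $\Delta := |t-s|^{(2-\beta)/2} + \|x-y\|^{2-\beta}$ throughout. First I would reduce to the Malliavin vector $Z := (u(s,y),\, u(t,x) - u(s,y)) \in \mathbb{R}^{2d}$: the affine change of variables $(v_1, v_2) \mapsto (v_1, v_1 + v_2)$ has unit Jacobian, so $p_{s,y;t,x}(z_1, z_2) = p_Z(z_1, z_2 - z_1)$, and it suffices to bound $p_Z(a, b)$ and then substitute $(a, b) = (z_1, z_2 - z_1)$. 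Next I would invoke the explicit density representation for Malliavin-smooth random vectors (\cite[Corollary 5.10]{DKN13}),
\[
p_Z(\zeta) = \mathrm{E}\bigl[\mathbf{1}_{\{Z > \zeta\}} \, H_{(1,\ldots,2d)}(Z, 1)\bigr],
\]
where $H_{(1,\ldots,2d)}(Z, 1)$ is an iterated Skorohod integral built from $\gamma_Z^{-1}$ and the iterated Malliavin derivatives of $Z$.

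For the short-range estimate, i.e.\ the prefactor $\Delta^{-d/2}$, I would bound $\|H_{(1,\ldots,2d)}(Z,1)\|_{L^q(\Omega)}$ by applying Meyer's inequality to the $2d$ nested Skorohod integrals and feeding in three sharp inputs: the negative moment bound on the smallest eigenvalue of $\gamma_Z$ from Theorem \ref{prop2017-09-19-4000}; the sharp $L^p$-H\"{o}lder estimate for $D^m(u(t,x) - u(s,y))$ from Theorem \ref{prop2018-09-11-1}; and a standard Malliavin bound $\|D^m u(s,y)\|_{L^p(\Omega; (\mathscr{H}_T^d)^{\otimes m})} \leq c$. Careful bookkeeping in the $2 \times 2$ block structure of $\gamma_Z$---the $(1,1)$-block of order $1$, the $(2,2)$-block of order $\Delta$, and the off-diagonal block of order $\Delta^{1/2}$---should then yield $\|H_{(1,\ldots,2d)}(Z,1)\|_{L^q(\Omega)} \leq c\Delta^{-d/2}$, and hence the uniform estimate $p_Z(a,b) \leq c\Delta^{-d/2}$.

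For the tail factor $[\Delta^2/\|b\|^2 \wedge 1]^{p/(2d)}$ I would combine the density representation with H\"{o}lder's inequality,
\[
p_Z(a,b) \leq \|H_{(1,\ldots,2d)}(Z,1)\|_{L^q(\Omega)} \cdot \mathrm{P}\{Z > \zeta\}^{1/q'},
\]
and, when $\|b\|$ is large compared with $\Delta$, choose the sign pattern in the density formula so that $\{Z > \zeta\}$ forces $\|u(t,x) - u(s,y)\|$ to be of order at least $c\|b\|$. Chebyshev's inequality together with Theorem \ref{prop2018-09-10-1} then gives $\mathrm{P}\{\|u(t,x) - u(s,y)\| \geq c\|b\|\} \leq c\,\Delta^{p'/2}\|b\|^{-p'}$ for every $p' \geq 2$; optimising the dual exponents $q'$ and $p'$ produces the exponent $p/(2d)$ in the bracketed factor. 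Taking the minimum of this tail bound with the short-range bound, and substituting $b = z_2 - z_1$, yields \eqref{eq2017-11-20-1}.

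The delicate part will be the $L^q$-norm estimate of $H_{(1,\ldots,2d)}(Z,1)$: the inverse Malliavin matrix $\gamma_Z^{-1}$ is singular on the increment block (scaling like $\Delta^{-1}$) yet tame on the base block, and the $2d$ iterated Skorohod integrals must combine the contributions of Theorems \ref{prop2018-09-11-1} and \ref{prop2017-09-19-4000} to reproduce exactly the Gaussian scaling $\Delta^{-d/2}$, with all powers of $\Delta$ matching sharply. This is the step where the improvements over \cite{DKN13} enter: each of our three sharp inputs is tight in the appropriate exponent, so no extra $\eta$ survives the contraction, and \eqref{eq2018-08-29-1} can thereby be sharpened.
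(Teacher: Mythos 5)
Your proposal follows essentially the same route as the paper's: reduce to $p_Z$ via the unit-Jacobian change of variables, represent $p_Z$ through the iterated Skorohod integral $H_{(1,\ldots,2d)}(Z,1)$, split off a tail probability via H\"older/Chebyshev controlled by Theorem~\ref{prop2018-09-10-1}, and extract the prefactor $(\,|t-s|^{(2-\beta)/2}+\|x-y\|^{2-\beta}\,)^{-d/2}$ from the $L^2$-norm of $H$ using the block structure of $\gamma_Z^{-1}$ (Proposition~\ref{th2017-09-22-1}, which packages Theorems~\ref{prop2018-09-11-1} and~\ref{prop2017-09-19-4000}). The only cosmetic difference is that the paper invokes the ready-made per-coordinate tail inequality from \cite[Theorem 1.6(b) and Proposition 5.11]{DKN13} directly, while you re-derive an equivalent H\"older split from the raw density formula; both calculations yield the bracketed factor with exponent $p/(2d)$, so the argument is in substance identical.
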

\begin{theorem}\label{theorem2018-09-21-1}
Assume \textbf{P1} and \textbf{P2}. Fix $T > 0$ and $M > 0$. Let $I \times J \subset \, ]0, T] \times \mathbb{R}^k$ be a closed non-trivial rectangle. There exists $c > 0$ depending on $I, J$ and $M$ such that for all compact sets $A \subseteq [-M, M]^d$,
\begin{align} \label{th2018-09-21-2}
\mathrm{P}\{u(I \times J) \cap A \neq \emptyset\} \geq c\, \rm{Cap}_{d - (\frac{4 + 2k}{2 - \beta})}(A).
\end{align}
\end{theorem}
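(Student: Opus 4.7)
The strategy is the classical second-moment (Paley-Zygmund) method for hitting probabilities of random fields, as implemented for SPDEs in \cite{DKN13}. The entire gain over \eqref{eq2018-08-29-1} comes from using the sharp joint density bound of Theorem \ref{theorem2017-08-17-1} in place of the $\eta$-loss bound of \cite[Theorem 1.8]{DKN13}; everything else is standard Kahane-style potential theory.

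Given a probability measure $\mu$ supported on $A \subseteq [-M, M]^d$ and a smooth mollifier $\varphi_\epsilon$ on $\mathbb{R}^d$, introduce the random functional
\begin{align*}
J_\epsilon := \int_{I \times J} dt\, dx \int_A \varphi_\epsilon(u(t,x) - z)\, \mu(dz).
\end{align*}
A uniform lower bound $\mathrm{E}[J_\epsilon] \geq c_1 > 0$ follows from the strict positivity of the one-point density $p_{t,x}(z)$ on the compact set $I \times J \times [-M, M]^d$, already established under \textbf{P1} and \textbf{P2} in \cite{DKN13}. For the second moment, letting $\epsilon \to 0$ and bounding the mollified joint density by the joint density,
\begin{align*}
\mathrm{E}[J_\epsilon^2] \leq \int_A\!\int_A \mu(dz_1)\, \mu(dz_2) \int_{(I\times J)^2} p_{s,y;t,x}(z_1, z_2)\, ds\, dy\, dt\, dx + o(1).
\end{align*}
Inserting Theorem \ref{theorem2017-08-17-1}, the key reduction is to show that the inner space-time double integral is bounded by $C\, \|z_1 - z_2\|^{-(d - (4+2k)/(2-\beta))}$ in the supercritical range $d > (4+2k)/(2-\beta)$ (with logarithmic or bounded substitutes in the marginal and subcritical cases). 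After the change of variables $u := t - s$ and $v := x - y$, this becomes a radial integral in the parabolic scale $|u|^{(2-\beta)/2} + \|v\|^{2-\beta}$; the key geometric input is that the parabolic ball $\{(u, v) \in \mathbb{R} \times \mathbb{R}^k : |u|^{(2-\beta)/2} + \|v\|^{2-\beta} \leq r\}$ has Lebesgue volume of order $r^{(2+k)/(2-\beta)}$. Splitting the radial integral at the crossover where the minimum in the density bound equals $1$, and taking the free parameter $p$ in Theorem \ref{theorem2017-08-17-1} large enough that both pieces converge, yields the claimed energy bound.

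Combining the two moment estimates via the Paley-Zygmund inequality gives
\begin{align*}
\mathrm{P}\{J_\epsilon > 0\} \geq \frac{\mathrm{E}[J_\epsilon]^2}{\mathrm{E}[J_\epsilon^2]} \geq \frac{c_2}{\mathcal{E}_{d - (4+2k)/(2-\beta)}(\mu)},
\end{align*}
where $\mathcal{E}_\alpha(\mu) := \iint \|z_1 - z_2\|^{-\alpha}\, \mu(dz_1)\, \mu(dz_2)$ is the Newtonian $\alpha$-energy. Choosing $\mu$ to be an almost-extremal measure for $\mathrm{Cap}_{d - (4+2k)/(2-\beta)}(A)$, and passing to the limit $\epsilon \to 0$ (using that $\{J_\epsilon > 0\}$ forces $u(I \times J)$ to meet the $\epsilon$-neighborhood $A^{(\epsilon)}$ of $A$, together with compactness of $A$ and path continuity of $u$ to identify the intersection $\bigcap_\epsilon \{u(I \times J) \cap A^{(\epsilon)} \neq \emptyset\}$ with $\{u(I \times J) \cap A \neq \emptyset\}$), yields the desired lower bound. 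The only genuinely delicate step is the parabolic scaling integral in the second moment; it is precisely here that the sharp exponent $(2-\beta)/2$ in Theorem \ref{theorem2017-08-17-1} (rather than $(2-\beta)/2 - \eta$) is what eliminates the extra $\eta$ from \eqref{eq2018-08-29-1}.
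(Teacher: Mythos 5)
Your proposal is correct and follows the same Paley--Zygmund/energy route as the paper, which simply invokes the proof of \cite[Theorem 1.2(b)]{DKN13} with $d+\eta$ replaced by $d$, using Theorem \ref{theorem2017-08-17-1} and the fact that \cite[Lemma 2.3]{DKN13} extends to the boundary exponents $\tilde\gamma=\gamma=2-\beta$; your parabolic-ball volume estimate $\big|\{(u,v)\in\mathbb{R}\times\mathbb{R}^k:|u|^{(2-\beta)/2}+\|v\|^{2-\beta}\le r\}\big|\asymp r^{(2+k)/(2-\beta)}$ is exactly the computation behind that extension and is why the space-time integral produces the Newtonian kernel of order $d-\frac{4+2k}{2-\beta}$. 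One caution worth recording: the displayed bound \eqref{eq2017-11-20-1} carries a squared numerator in the bracket, but the proof of Theorem \ref{theorem2017-08-17-1} actually yields the first power $\big(|t-s|^{\frac{2-\beta}{2}}+\|x-y\|^{2-\beta}\big)/\|z_1-z_2\|^2$, and it is this first power that, after splitting your radial integral at $\Delta=\|z_1-z_2\|^2$ and taking $p$ large, gives precisely the kernel $\|z_1-z_2\|^{-(d-(4+2k)/(2-\beta))}$ you assert, so your computation is internally consistent with the correct form of the density bound.
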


Theorem \ref{theorem2017-08-17-1} is an improvement of \cite[Theorem 1.6(b)]{DKN13} and Theorem \ref{theorem2018-09-21-1} is an improvement of \cite[Theorem 1.2(b)]{DKN13}, and they are proved in Section \ref{section2018-09-21-4}. The main ingredients which allow for these improvements are the sharp $L^p$-H\"{o}lder continuity estimates of Theorems \ref{prop2018-09-10-1} and \ref{prop2018-09-11-1} and a better estimate on the Malliavin derivative of $u(t, x)$ given in Lemma \ref{lemma2018-09-12-1} below.

We conclude this section by giving a rigorous formulation of \eqref{eq2018-03-01-1}, following Walsh \cite{Wal86}.
We first define precisely the driving noise that appears in \eqref{eq2018-03-01-1}. Let "$\cdot$" denote the temporal variable and "$\ast$" the spatial variable. Let $\mathscr{D}(\mathbb{R}^{k + 1})$ be the space of $C^{\infty}$ test-functions with compact support. Then $F = \{F(\phi) = (F^1(\phi), \ldots, F^d(\phi)), \phi \in \mathscr{D}(\mathbb{R}^{k + 1})\}$ is an $L^2(\Omega, \mathscr{F}, \mbox{P})^d$-valued mean zero Gaussian process with covariance
\begin{align*}
\mbox{E}\left[F^i(\phi)F^j(\psi)\right] = \delta_{ij}\int_{\mathbb{R}_+}dr \int_{\mathbb{R}^k}dy\int_{\mathbb{R}^k}dz \, \phi(r, y)\|y - z\|^{-\beta}\psi(r, z).
\end{align*}
Using elementary properties of the Fourier transform (see Dalang \cite{Dal99}), this covariance can also be written as
\begin{align*}
\mbox{E}\left[F^i(\phi)F^j(\psi)\right] = \delta_{ij}\, c_{k, \beta}\int_{\mathbb{R}_+}dr \int_{\mathbb{R}^k}d\xi \, \|\xi\|^{\beta - k} \mathscr{F}\phi(r, *)(\xi)\overline{\mathscr{F}\psi(r, *)(\xi)},
\end{align*}
where $c_{k, \beta}$ is a constant and $\mathscr{F}f(\xi)$ is the Fourier transform of $f$, that is,
\begin{align*}
\mathscr{F}f(\xi) = \int_{\mathbb{R}^k}e^{- i \xi\cdot x}f(x)dx.
\end{align*}

Following Walsh \cite{Wal86}, a rigorous formulation of \eqref{eq2018-03-01-1} through the notion of {\em mild solution} is as follows. Let $M = (M^1, \ldots, M^d), M^i = \{M_t^i(A), t \geq 0, A \in \mathscr{B}_b(\mathbb{R}^k)\}$ be the $d$-dimensional worthy martingale measure obtained as an extension of the process $\dot{F}$ as in Dalang and Frangos \cite{DaF98}. Then a {\em mild solution} of \eqref{eq2018-03-01-1} is a jointly measurable $\mathbb{R}^d$-valued process $u = \{u(t, x), t \geq 0, x \in \mathbb{R}^k\}$, adapted to the natural filtration generated by $M$, such that
\begin{align}\label{eq2018-03-01-3}
u_i(t, x) & = \int_0^t\int_{\mathbb{R}^k}S(t - s, x - y)\sum_{j=1}^d\sigma_{ij}(u(s, y))M^j(ds, dy) \nonumber \\
& \quad + \int_0^tds\int_{\mathbb{R}^k}dy \, S(t - s, x - y)b_i(u(s, y)), \qquad i \in \{1, \ldots, d\},
\end{align}
where $S(t, x)$ is the fundamental solution of the deterministic heat equation in $\mathbb{R}^k$, that is,
\begin{align*}
S(t, x) = (2\pi t)^{-k/2}\exp\left(-\frac{\|x\|^2}{2t}\right),
\end{align*}
and the stochastic integral is interpreted in the sense of \cite{Wal86}.

Using the results of Dalang \cite{Dal99}, existence and uniqueness of the solution of \eqref{eq2018-03-01-1} holds,
as discussed in \cite[Section 2]{DKN13}, under the condition
\begin{align}\label{eq2018-09-11-3}
0 < \beta < (2 \wedge k),
\end{align}
and in this case, there exists a unique $L^2$-continuous solution of \eqref{eq2018-03-01-3} satisfying
\begin{align}\label{eq2018-09-10-1}
\sup_{(t, x) \in [0, T] \times \mathbb{R}^k}\mbox{E}\left[|u_i(t, x)|^p\right] < \infty, \quad i \in \{1, \ldots, d\},
\end{align}
for any $T > 0$ and $p \geq 1$.

\section{Elements of Malliavin calculus}\label{section2018-09-21-3}

In this section, we introduce, following Nualart \cite{Nua06} (see also \cite{San05}), some elements of Malliavin calculus.
Let $\mathscr{S}(\mathbb{R}^k)$  be the Schwartz space of $C^{\infty}$ functions on $\mathbb{R}^k$ with rapid decrease. Let $\mathscr{H}$ denote the completion of $\mathscr{S}(\mathbb{R}^k)$ endowed with their inner product
\begin{align*}
\langle \phi, \psi\rangle_{\mathscr{H}} &= \int_{\mathbb{R}^k}dx\int_{\mathbb{R}^k}dy \, \phi(x)\|x - y\|^{-\beta}\psi(y) \\
& = c_{k, \beta}\int_{\mathbb{R}^{k}}d\xi \, \|\xi\|^{\beta - k} \mathscr{F}\phi(\xi)\overline{\mathscr{F}\psi(\xi)},
\end{align*}
$\phi, \, \psi \in \mathscr{S}(\mathbb{R}^k)$.

For $h = (h_1, \ldots, h_d) \in \mathscr{H}^d$ and $\tilde{h} = (\tilde{h}_1, \ldots, \tilde{h}_d) \in \mathscr{H}^d$, we set $\langle h, \tilde{h}\rangle_{\mathscr{H}^d} = \sum_{i = 1}^d\langle h_i, \tilde{h}_i\rangle_{\mathscr{H}}$.
Let $T > 0$ be fixed. We set $\mathscr{H}_T^d = L^2([0, T]; \mathscr{H}^d)$ and for $0 \leq s \leq t \leq T$, we will write $\mathscr{H}_{s, t}^d = L^2([s, t], \mathscr{H}^d)$.

The centered Gaussian noise $F$ can be used to construct an isonormal Gaussian process $\{W(h), h \in \mathscr{H}_T^d\}$ as follows. Let $\{e_j, j \geq 0\} \subset \mathscr{S}(\mathbb{R}^k)$ be a complete orthonormal system of the Hilbert space $\mathscr{H}$. Then for any $t \in [0, T]$, $i \in \{1, \ldots, d\}$ and $j \geq 0$, set
\begin{align*}
W_j^i(t) = \int_0^t\int_{\mathbb{R}^k}e_j(x)\cdot F^i(ds, dx),
\end{align*}
so that $(W_j^i, j \geq 1)$ is a sequence of independent standard real-valued Brownian motions such that, for any $\phi \in \mathscr{D}([0, T] \times \mathbb{R}^k)$, \begin{align*}
F^i(\phi) = \sum_{j = 0}^{\infty} \int_0^T \left\langle \phi(s, *), e_j(*)\right\rangle_{\mathscr{H}}dW^i_j(s),
\end{align*}
where the series converges in $L^2(\Omega, \mathscr{F}, \mbox{P})$. For $h^i \in \mathscr{H}_T$, we set
\begin{align*}
W^i(h) = \sum_{j = 0}^{\infty} \int_0^T \left\langle h^i(s, *), e_j(*)\right\rangle_{\mathscr{H}}dW^i_j(s),
\end{align*}
where, again, the series converges in $L^2(\Omega, \mathscr{F}, \mbox{P})$. In particular, for $\phi \in \mathscr{D}([0, T] \times \mathbb{R}^k)$, $F^i(\phi) = W^i(\phi)$. Finally, for $h = (h_1, \ldots, h_d) \in \mathscr{H}_T^d$, we set
\begin{align*}
W(h) = \sum_{i = 1}^dW^i(h^i).
\end{align*}

With this isonormal Gaussian process, we can use the framework of Malliavin calculus. Let $\mathscr{S}$ denote the class of smooth random variables of the form
\begin{align*}
G = g(W(h_1), \ldots, W(h_n)),
\end{align*}
where $n \geq 1$, $g \in \mathscr{C}_p^{\infty}(\mathbb{R}^n)$, the set of real-valued functions $g$ such that $g$ and all its partial derivatives have at most polynomial growth and $h_i \in \mathscr{H}_T^d$. Given $G \in \mathscr{S}$, its derivative $(D_{r}G = (D^{(1)}_{r}G, \ldots, D^{(d)}_{r}G), \, r \in [0, T])$ is an $\mathscr{H}_T^d$-valued random vector defined by
\begin{align*}
D_{r}G = \sum_{i = 1}^n \partial_i g (W(h_1), \ldots, W(h_n))h_i(r).
\end{align*}
For $\phi \in \mathscr{H}^d$ and $r \in [0, T]$, we write $D_{r, \phi}G = \langle D_rG, \phi(*)\rangle_{\mathscr{H}^d}$.
More generally,  the derivative $D^mG = (D^m_{(r_1, \ldots, r_m)}, (r_1, \ldots, r_m) \in [0, T]^m)$ of order $m \geq 1$ of $G$ is the $(\mathscr{H}_T^d)^{\otimes m}$-valued random vector defined by
 \begin{align*}
D_{(r_1, \ldots, r_m)}^mG = \sum_{i_1, \ldots, i_m = 1}^n \frac{\partial}{\partial x_{i_1}}\cdots \frac{\partial}{\partial x_{i_m}}\, g(W(h_1), \ldots, W(h_n))h_{i_1}(r_1)\otimes \cdots \otimes h_{i_m}(r_m),
\end{align*}
where the notation $\otimes$ denotes the tensor product of functions.

For $p, m \geq 1$, the space $\mathbb{D}^{m, p}$ is the closure of $\mathscr{S}$ with respect to the seminorm $\|\cdot\|_{m, p}$ defined by
\begin{align*}
\|G\|_{m, p}^p = \mbox{E}[|G|^p] + \sum_{j = 1}^m\mbox{E}\left[\|D^jG\|_{(\mathscr{H}_T^d)^{\otimes j}}^p\right].
\end{align*}
We set $\mathbb{D}^{\infty} = \cap_{p \geq 1}\cap_{m \geq 1}\mathbb{D}^{m, p}$.

The derivative operator $D$ on $L^2(\Omega)$ has an adjoint, termed the Skorohod integral and denoted by $\delta$, which is an unbounded and closed operator on $L^2(\Omega, \mathscr{H}_T^d)$; see \cite[Section 1.3]{Nua06}. Its domain, denoted by  $\mbox{Dom}\ \delta$,  is the set of elements $u \in L^2(\Omega, \mathscr{H}_T^d)$ such that there exists a constant $c$ such that $|\mbox{E}[\langle DG, u \rangle_{\mathscr{H}_T^d}]| \leq c \|G\|_{0, 2}$, for any $G \in \mathbb{D}^{1, 2}$. If $u \in \mbox{Dom}\ \delta$, then $\delta(u)$ is the element of $L^2(\Omega)$ characterized by the following duality relation:
\begin{align*}
\mbox{E}[G\, \delta(u)] = \mbox{E}\left[\langle DG, u\rangle_{\mathscr{H}_T^d}\right], \quad \mbox{for all }\ G \in \mathbb{D}^{1, 2}.
\end{align*}

Recall from \cite[Section 3]{DKN13} that for $r \in [0, t]$ and $i, l \in \{1, \ldots, d\}$, the derivative of $u_i(t, x)$ satisfies the system of equations
\begin{align}\label{eq2018-03-01-4}
D_r^{(l)}(u_i(t, x)) & = \sigma_{il}(u(r, *))S(t - r, x - *) + a_i(l, r, t, x),
\end{align}
where
\begin{align}\label{eq2018-09-12-1}
 a_i(l, r, t, x) &=  \int_r^t\int_{\mathbb{R}^k} S(t - \theta, x - \eta)\sum_{j = 1}^dD_r^{(l)}(\sigma_{ij}(u(\theta, \eta)))M^j(d\theta, d\eta) \nonumber \\
& \quad + \int_r^td\theta \int_{\mathbb{R}^k}d\eta \, S(t - \theta, x - \eta)D_r^{(l)}(b_i(u(\theta, \eta))),
\end{align}
and $D_r^{(l)}(u_i(t, x)) = 0$ if $r > t$. Moreover, by \cite[Proposition 6.1]{NuQ07}, for any $p > 1$, $m \geq 1$ and $i \in \{1, \ldots, d\}$, the order $m$ derivative satisfies
\begin{align}\label{eq2018-09-10-2}
\sup_{(t, x) \in [0, T] \times \mathbb{R}^k}\mbox{E}\left[\|D^m(u_i(t, x))\|^p_{(\mathscr{H}_T^d)^{\otimes m}}\right] < \infty,
\end{align}
and $D^m$ also satisfies the system of stochastic partial differential equations given in \cite[(6.29)]{NuQ07} and obtained by iterating the calculation that leads to \eqref{eq2018-03-01-4}. In particular, $u(t, x) \in (\mathbb{D}^{\infty})^d$, for all $(t, x) \in [0, T] \times \mathbb{R}^k$.

\section{Proof of Theorems \ref{theorem2017-08-17-1} and \ref{theorem2018-09-21-1} (assuming Theorems \ref{prop2018-09-10-1}--\ref{prop2017-09-19-4000})} \label{section2018-09-21-4}

Recall that the Malliavin matrix  $\gamma_Z$ of $Z = (u(s, y), u(t, x) - u(s, y))$ is a symmetric $2d \times 2d$ random matrix with four $d \times d$ blocs of the form
\begin{align*}
\gamma_Z = \left(
  \begin{array}{ccc}
    \gamma_Z^{(1)} & \vdots  & \gamma_Z^{(2)} \\
    \cdots  & \vdots  & \cdots  \\
    \gamma_Z^{(3)} &  \vdots & \gamma_Z^{(4)} \\
  \end{array}
\right)
\end{align*}
where
\begin{align*}
\gamma_Z^{(1)} &= \left(\left\langle D(u_i(s, y)), D(u_j(s, y))\right\rangle_{\mathscr{H}_T^d}\right)_{i, j = 1, \dots, d},\\
\gamma_Z^{(2)} &=   \left(\left\langle D(u_i(s, y)), D(u_j(t, x)- u_j(s, y))\right\rangle_{\mathscr{H}_T^d}\right)_{i, j = 1, \dots, d},\\
\gamma_Z^{(3)} &=  \left(\left\langle D(u_i(t, x)- u_i(s, y)), D(u_j(s, y))\right\rangle_{\mathscr{H}_T^d}\right)_{i, j = 1, \dots, d},\\
\gamma_Z^{(4)} &=  \left(\left\langle D(u_i(t, x) - u_i(s, y)), D(u_j(t, x)- u_j(s, y))\right\rangle_{\mathscr{H}_T^d}\right)_{i, j = 1, \dots, d}.
\end{align*}
We let ($\mathbf{1}$) denote the couples of $\{1, \ldots, d\} \times \{1, \ldots, d\}$, ($\mathbf{2}$) denote the couples of $\{1, \ldots, d\} \times \{d + 1, \ldots, 2d\}$, ($\mathbf{3}$) denote the couples of $\{d + 1, \ldots, 2d\} \times \{1, \ldots, d\}$ and ($\mathbf{4}$) denote the couples of $\{d + 1, \ldots, 2d\} \times \{d + 1, \ldots, 2d\}$.

We first state two results which follow exactly along the same lines as \cite[Propositions 5.3 and 5.4]{DKN13}, using \eqref{eq2018-09-10-2} and our Theorem \ref{prop2018-09-11-1} instead of their \cite[(3.2) and Proposition 5.1]{DKN13}. Their proofs are omitted.

\begin{prop}\label{prop2017-09-19-1}
Fix $T > 0$ and let  $I \times J \subset \, ]0, T] \times \mathbb{R}^k$ be a closed non-trivial rectangle. Let $A_Z$ denote the cofactor matrix of $\gamma_Z$. Assuming \textbf{P1}, for any $(s, y), (t, x) \in I \times J$, $(s, y) \neq (t, x)$, $p > 1$,
\begin{align*}
\mathrm{E}\left[|(A_Z)_{m, l}|^p\right]^{1/p}
 \leq
\left\{\begin{array}{ll}
    c_{p, T} (|t - s|^{\frac{2 - \beta}{2}} + \|x - y\|^{2 - \beta})^d & \hbox{if $(m, l) \in (\mathbf{1})$,} \\
    c_{p, T} (|t - s|^{\frac{2 - \beta}{2}} + \|x - y\|^{2 - \beta})^{d - \frac{1}{2}}   & \hbox{if $(m, l) \in (\mathbf{2})$ or $(\mathbf{3})$,} \\
    c_{p, T} (|t - s|^{\frac{2 - \beta}{2}} + \|x - y\|^{2 - \beta})^{d - 1} & \hbox{if $(m, l) \in (\mathbf{4})$.}
  \end{array}
\right.
\end{align*}
\end{prop}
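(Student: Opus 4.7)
Set $\Delta := |t-s|^{(2-\beta)/2} + \|x-y\|^{2-\beta}$. The plan has two ingredients: (i) sharp $L^q(\Omega)$ bounds on the individual entries of $\gamma_Z$ that are sensitive to which of the four blocks the entry belongs to, and (ii) a combinatorial count in the determinantal expansion of the cofactor $(A_Z)_{m,l}$.

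For (i), the Cauchy--Schwarz inequality, combined with the uniform Malliavin bound \eqref{eq2018-09-10-2} (with $m=1$) and the sharp increment estimate of Theorem~\ref{prop2018-09-11-1}, yields for all $i,j \in \{1,\dots,d\}$ and $q \geq 1$,
\begin{align*}
\|(\gamma_Z^{(1)})_{ij}\|_{L^q(\Omega)} &\leq C_q, \\
\|(\gamma_Z^{(2)})_{ij}\|_{L^q(\Omega)} + \|(\gamma_Z^{(3)})_{ij}\|_{L^q(\Omega)} &\leq C_q\, \Delta^{1/2}, \\
\|(\gamma_Z^{(4)})_{ij}\|_{L^q(\Omega)} &\leq C_q\, \Delta.
\end{align*}

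For (ii), write $(A_Z)_{m,l} = (-1)^{m+l}\det M^{(m,l)}$, where $M^{(m,l)}$ is the $(2d-1) \times (2d-1)$ submatrix obtained by deleting row $m$ and column $l$ from $\gamma_Z$. Expanding this determinant, $(A_Z)_{m,l}$ is a signed sum over permutations $\pi$ of products of $2d-1$ entries of $\gamma_Z$, one from each remaining row and column. Applying H\"older's inequality with exponent $(2d-1)p$ on each factor, it suffices to control the exponent of $\Delta$ in each such product. Given $\pi$, let $a, b, c, e$ denote the number of factors whose $(i,\pi(i))$ position falls in block $(\mathbf{1}), (\mathbf{2}), (\mathbf{3}), (\mathbf{4})$, respectively. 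The four row/column totals for the deleted minor give a linear system in $(a,b,c,e)$ whose unique solution (as a function of one free parameter) satisfies, independently of the specific permutation,
\begin{align*}
e + \tfrac{1}{2}(b + c) = \begin{cases} d & \text{if } (m,l) \in (\mathbf{1}), \\ d - \tfrac{1}{2} & \text{if } (m,l) \in (\mathbf{2}) \text{ or } (\mathbf{3}), \\ d - 1 & \text{if } (m,l) \in (\mathbf{4}). \end{cases}
\end{align*}
Summing over the finitely many permutations gives the stated bound.

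The argument is essentially that of \cite[Propositions 5.3 and 5.4]{DKN13}, with our Theorem~\ref{prop2018-09-11-1} substituted for their suboptimal increment estimate, so that the improved exponents of $\Delta$ at the entry level propagate through the combinatorial expansion. I do not anticipate any serious obstacle beyond writing out the four linear systems case by case; the substantive input is really step (i), where Theorem~\ref{prop2018-09-11-1} — and hence the analysis of the preceding section — is used.
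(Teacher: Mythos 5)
Your proposal is correct and follows exactly the route the paper indicates: Cauchy--Schwarz together with the uniform bound \eqref{eq2018-09-10-2} and the sharp increment estimate of Theorem~\ref{prop2018-09-11-1} give the block-dependent entrywise $L^q$ bounds, and the cofactor expansion with the row/column balance count $e + \tfrac{1}{2}(b+c)$ yields the stated exponents — this is precisely the argument of \cite[Proposition 5.3]{DKN13} with the improved increment estimate substituted, which is why the paper omits the proof.
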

\begin{prop}\label{prop2017-09-19-2}
Fix $T > 0$ and let  $I \times J \subset \, ]0, T] \times \mathbb{R}^k$ be a closed non-trivial rectangle. Assuming \textbf{P1}, for any $(s, y), (t, x) \in I \times J$, $(s, y) \neq (t, x)$, $p > 1$,
\begin{align*}
\mathrm{E}\left[\|D^k(\gamma_Z)_{m, l}\|_{\mathscr{H}^{\otimes k}}^p\right]^{1/p}
 \leq
\left\{\begin{array}{ll}
    c_{k, p, T} & \hbox{if $(m, l) \in (\mathbf{1})$,} \\
    c_{k, p, T} (|t - s|^{\frac{2 - \beta}{2}} + \|x - y\|^{2 - \beta})^{\frac{1}{2}}   & \hbox{if $(m, l) \in (\mathbf{2})$ or $(\mathbf{3})$,} \\
    c_{k, p, T} (|t - s|^{\frac{2 - \beta}{2}} + \|x - y\|^{2 - \beta}) & \hbox{if $(m, l) \in (\mathbf{4})$.}
  \end{array}
\right.
\end{align*}
\end{prop}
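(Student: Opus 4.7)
The plan is to apply a higher-order Leibniz rule for the Malliavin derivative $D^k$ to each scalar entry $(\gamma_Z)_{m,l}$ written as $\langle DX, DY\rangle_{\mathscr{H}_T^d}$, and then to bound the resulting sum by H\"older's inequality together with the uniform moment bound \eqref{eq2018-09-10-2} and the sharp $L^p$-H\"older continuity of Theorem \ref{prop2018-09-11-1}. In all four blocs, every entry of $\gamma_Z$ has the form $\langle DX, DY\rangle_{\mathscr{H}_T^d}$ with $X, Y \in \{u_i(s,y),\, u_j(t,x)-u_j(s,y)\colon 1 \le i, j \le d\}$, and the label $(\mathbf{1})$--$(\mathbf{4})$ is nothing but a bookkeeping of how many of $X$ and $Y$ are increments.

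Iterating the Malliavin--Leibniz rule on the scalar inner product gives, for $k \ge 1$,
\[
D^k \langle DX, DY\rangle_{\mathscr{H}_T^d} = \sum_{j=0}^{k} \binom{k}{j}\, \bigl\langle D^{j+1}X,\, D^{k-j+1}Y\bigr\rangle_{\mathscr{H}_T^d},
\]
where the pairing on the right-hand side contracts one copy of $\mathscr{H}_T^d$ in each argument, so the output is a $(\mathscr{H}_T^d)^{\otimes k}$-valued random element. Taking $(\mathscr{H}_T^d)^{\otimes k}$-norms, Cauchy--Schwarz in $\mathscr{H}_T^d$, and H\"older's inequality in $\Omega$ yields
\[
\mathrm{E}\bigl[\|D^k (\gamma_Z)_{m,l}\|_{(\mathscr{H}_T^d)^{\otimes k}}^p\bigr]^{1/p} \le c_k \sum_{j=0}^{k} \|D^{j+1}X\|_{L^{2p}(\Omega;(\mathscr{H}_T^d)^{\otimes (j+1)})}\, \|D^{k-j+1}Y\|_{L^{2p}(\Omega;(\mathscr{H}_T^d)^{\otimes (k-j+1)})}.
\]

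Each factor is then controlled by \eqref{eq2018-09-10-2} when its argument is a value of $u$ (bounded by a constant), or by Theorem \ref{prop2018-09-11-1} when its argument is an increment $u_i(t,x)-u_i(s,y)$ (bounded by a constant multiple of $|t-s|^{(2-\beta)/4} + \|x-y\|^{(2-\beta)/2}$). The elementary equivalence $a + b \asymp (a^2 + b^2)^{1/2}$ for $a, b \ge 0$ converts this last quantity into a constant multiple of $(|t-s|^{(2-\beta)/2} + \|x-y\|^{2-\beta})^{1/2}$, so counting how many factors are increments produces the announced exponent: zero for bloc $(\mathbf{1})$, one for blocs $(\mathbf{2})$ and $(\mathbf{3})$, and two for bloc $(\mathbf{4})$. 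The argument is morally identical to that of \cite[Proposition 5.4]{DKN13}; the whole source of the improvement is that the bound depends linearly on the H\"older exponent provided by Theorem \ref{prop2018-09-11-1}, so the proof reduces to replacing the suboptimal exponent used in \cite{DKN13} by the sharp one. The only mildly delicate point is the bookkeeping of the nested contractions in the higher-order Leibniz expansion, which one checks by a routine induction on $k$ to confirm that the output lives in $(\mathscr{H}_T^d)^{\otimes k}$ before the norm is taken.
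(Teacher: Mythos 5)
Your proposal is correct and follows essentially the same route as the paper, which omits this proof and defers to \cite[Proposition 5.4]{DKN13} with the substitution of Theorem \ref{prop2018-09-11-1} in place of \cite[Proposition 5.1]{DKN13}: expand $D^k\langle DX, DY\rangle_{\mathscr{H}_T^d}$ by the Malliavin--Leibniz rule, apply Cauchy--Schwarz in the Hilbert norm and H\"older in $\Omega$, and then bound each factor by \eqref{eq2018-09-10-2} or the sharp increment estimate, counting how many of $X, Y$ are increments. Your bookkeeping of the nested contractions and the elementary conversion $|t-s|^{(2-\beta)/4} + \|x-y\|^{(2-\beta)/2} \asymp (|t-s|^{(2-\beta)/2} + \|x-y\|^{2-\beta})^{1/2}$ are both accurate, so no gaps.
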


The next result is an improvement of \cite[Proposition 5.5]{DKN13}.
\begin{prop}\label{prop2017-09-19-3}
Fix $T > 0$ and let  $I \times J \subset \, ]0, T] \times \mathbb{R}^k$ be a closed non-trivial rectangle.  Assume \textbf{P1} and \textbf{P2}. There exists $C$ depending on $T$ such that for any $(s, y), (t, x) \in I \times J$, $(s, y) \neq (t, x)$, $p > 1$,
\begin{align}\label{eq2017-09-19-1}
\mbox{E}\left[(\det \gamma_Z)^{-p}\right]^{1/p} \leq C (|t - s|^{\frac{2 - \beta}{2}} + \|x - y\|^{2 - \beta})^{-d}.
\end{align}
\end{prop}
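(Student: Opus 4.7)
The plan is to reduce $\mathrm{E}[(\det \gamma_Z)^{-p}]$ to the negative moments of the smallest eigenvalue $\lambda_{\min}(\gamma_Z)$, for which Theorem \ref{prop2017-09-19-4000} already provides the sharp bound. Writing $\Delta := |t-s|^{(2-\beta)/2} + \|x-y\|^{2-\beta}$, the naive inequality $\det \gamma_Z \geq \lambda_{\min}(\gamma_Z)^{2d}$ (valid since $\gamma_Z$ is $2d \times 2d$ and positive semi-definite) would produce a bound of order $\Delta^{-2d}$, which falls short of the target $\Delta^{-d}$ by a factor of two in the exponent. To recover the correct power one must exploit the block structure of $\gamma_Z$: the upper-left block $\gamma_Z^{(1)}$ is the Malliavin matrix of $u(s,y)$, hence non-degenerate at scale $O(1)$, while only the increment-related block $\gamma_Z^{(4)}$ is small.

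The first step is to apply the Schur complement identity,
$$\det \gamma_Z = \det(\gamma_Z^{(1)}) \cdot \det M_2, \qquad M_2 := \gamma_Z^{(4)} - \gamma_Z^{(3)}(\gamma_Z^{(1)})^{-1}\gamma_Z^{(2)},$$
which is valid almost surely: under \textbf{P1} and \textbf{P2}, the classical non-degeneracy of the Malliavin matrix of $u(s,y)$ (in the spirit of the estimates in \cite{NuQ07}) ensures that $\gamma_Z^{(1)}$ is invertible and that $\mathrm{E}[(\det \gamma_Z^{(1)})^{-q}] \leq C_q$ holds uniformly for $(s,y) \in I \times J$ and every $q \geq 1$.

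The second step is purely linear-algebraic: $\lambda_{\min}(M_2) \geq \lambda_{\min}(\gamma_Z)$. Given a unit vector $v \in \mathbb{R}^d$, substitute $\xi = (-(\gamma_Z^{(1)})^{-1}\gamma_Z^{(2)}v, \, v) \in \mathbb{R}^{2d}$ into $\xi^T \gamma_Z \xi$; completing the square shows this quadratic form equals $v^T M_2 v$, and since $\|\xi\| \geq \|v\| = 1$ one obtains $v^T M_2 v \geq \lambda_{\min}(\gamma_Z)$. Because $M_2$ is $d \times d$ and positive semi-definite, every one of its $d$ eigenvalues is at least $\lambda_{\min}(\gamma_Z)$, so
$$\det M_2 \geq \lambda_{\min}(M_2)^d \geq \lambda_{\min}(\gamma_Z)^d.$$

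Combining these ingredients via the Cauchy--Schwarz inequality,
$$\mathrm{E}[(\det \gamma_Z)^{-p}] \leq \mathrm{E}\bigl[(\det \gamma_Z^{(1)})^{-2p}\bigr]^{1/2} \mathrm{E}\bigl[\lambda_{\min}(\gamma_Z)^{-2dp}\bigr]^{1/2} \leq C \cdot \Delta^{-dp}$$
by Theorem \ref{prop2017-09-19-4000}, and extracting $p$-th roots yields \eqref{eq2017-09-19-1}. The entire substantive content of Proposition \ref{prop2017-09-19-3} sits inside Theorem \ref{prop2017-09-19-4000}; the remainder is deterministic linear algebra combined with the standard non-degeneracy of $\gamma_Z^{(1)}$. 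Accordingly, the only real obstacle to this proof is having the \emph{sharp} exponent $-2dp$ in the smallest-eigenvalue estimate rather than the weaker $-2dp + \eta$ used in \cite{DKN13}, which is precisely the improvement delivered by Theorem \ref{prop2017-09-19-4000}.
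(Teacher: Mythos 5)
Your proof is correct and follows essentially the same route as the paper, which simply invokes the Schur-complement determinant identity from \cite{DKN13} (their (5.11)), the uniform negative-moment bound on $\det\gamma_Z^{(1)}$ (their Proposition 5.7), and the sharp smallest-eigenvalue estimate of Theorem \ref{prop2017-09-19-4000}. What you have done is reconstruct, and verify explicitly, the two deterministic linear-algebra facts that the paper cites without proof — the factorization $\det\gamma_Z = \det\gamma_Z^{(1)}\cdot\det M_2$ and the interlacing bound $\lambda_{\min}(M_2)\geq\lambda_{\min}(\gamma_Z)$ via the substitution $\xi=(-(\gamma_Z^{(1)})^{-1}\gamma_Z^{(2)}v,\,v)$ — so the argument is self-contained but not a different approach. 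One small point worth flagging for completeness: Theorem \ref{prop2017-09-19-4000} is stated under the restriction $0\le t-s<1$, whereas the Proposition has no such restriction; when $t-s\ge 1$ the right side of \eqref{eq2017-09-19-1} is bounded below by a positive constant and the left side is bounded above uniformly by the same Schur-complement argument applied at unit scale, so the gap is harmless, but it should be mentioned.
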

\begin{proof}
Similar to the proof of \cite[Proposition 5.5]{DKN13} (see also \cite[Proposition 6.6]{DKN09}), this is a consequence of \cite[(5.11)]{DKN13}, our Theorem \ref{prop2017-09-19-4000} and \cite[Proposition 5.7]{DKN13}.
\end{proof}

From Propositions \ref{prop2017-09-19-1}--\ref{prop2017-09-19-3}, we obtain the following result, which improves \cite[Theorem 5.8]{DKN13}. The proof is similar to that of \cite[Theorem 5.8]{DKN13} (but using our Proposition \ref{prop2017-09-19-3} instead of \cite[Proposition 5.5]{DKN13}) and hence is omitted.
\begin{prop}\label{th2017-09-22-1}
Fix $T > 0$  and let  $I \times J \subset \, ]0, T] \times \mathbb{R}^k$ be a closed non-trivial rectangle.  Assume \textbf{P1} and \textbf{P2}.  For any $(s, y), (t, x) \in I \times J$,  $(s, y) \neq (t, x)$, $k \geq 0$ and  $p > 1$,
\begin{align}\label{eq2017-09-22-1}
\mbox{E}\left[\|(\gamma_Z)^{-1}_{m, l}\|_{k, p}\right]
 \leq
\left\{\begin{array}{ll}
    c_{k, p, T} & \hbox{if $(m, l) \in (\mathbf{1})$,} \\
    c_{k, p, T} (|t - s|^{\frac{2 - \beta}{2}} + \|x - y\|^{2 - \beta})^{-\frac{1}{2}}   & \hbox{if $(m, l) \in (\mathbf{2})$ or $(\mathbf{3})$,} \\
    c_{k, p, T} (|t - s|^{\frac{2 - \beta}{2}} + \|x - y\|^{2 - \beta})^{-1} & \hbox{if $(m, l) \in (\mathbf{4})$.}
  \end{array}
\right.
\end{align}
\end{prop}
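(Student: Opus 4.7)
The starting point is the identity $(\gamma_Z)^{-1}_{m,l} = (\det \gamma_Z)^{-1}(A_Z)_{l,m}$, so estimating the Malliavin Sobolev norm
$$\|(\gamma_Z)^{-1}_{m,l}\|_{k,p}^p = \mathrm{E}[|(\gamma_Z)^{-1}_{m,l}|^p] + \sum_{j=1}^{k} \mathrm{E}\big[\|D^j(\gamma_Z)^{-1}_{m,l}\|^p_{(\mathscr{H}_T^d)^{\otimes j}}\big]$$
reduces to controlling iterated derivatives of a product of $(\det \gamma_Z)^{-1}$ with a cofactor entry. The plan is to follow the recursive scheme of \cite[proof of Theorem 5.8]{DKN13} verbatim, replacing the inputs \cite[Propositions 5.3--5.5]{DKN13} by our Propositions \ref{prop2017-09-19-1}--\ref{prop2017-09-19-3}. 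These propositions preserve the same bloc-structured exponent pattern, but with the sharper scale $|t-s|^{(2-\beta)/2} + \|x-y\|^{2-\beta}$ in place of the non-sharp scale used in \cite{DKN13}. Hence the same bookkeeping argument goes through and automatically yields the improved estimate.

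\emph{Step 1 ($k=0$).} Apply Cauchy--Schwarz to $(\gamma_Z)^{-1}_{m,l}=(\det \gamma_Z)^{-1}(A_Z)_{l,m}$, then invoke Proposition \ref{prop2017-09-19-3} (giving a factor $(|t-s|^{(2-\beta)/2}+\|x-y\|^{2-\beta})^{-d}$) and Proposition \ref{prop2017-09-19-1} (giving the bloc-dependent factors $(\cdots)^{d}$, $(\cdots)^{d-1/2}$, or $(\cdots)^{d-1}$). The resulting products reproduce exactly the right-hand side of \eqref{eq2017-09-22-1} in the three bloc cases.

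\emph{Step 2 (higher derivatives).} Differentiating $\gamma_Z (\gamma_Z)^{-1} = \mathrm{Id}$ gives
$$D(\gamma_Z)^{-1} = -(\gamma_Z)^{-1}\,(D\gamma_Z)\,(\gamma_Z)^{-1},$$
and iterating this identity $j$ times produces, entry-by-entry, a finite sum (with bounded combinatorial constants) of terms of the form
$$\prod_{s=1}^{j+1}(\gamma_Z)^{-1}_{m_s,l_s}\;\cdot\;\bigotimes_{s=1}^{j}D^{\alpha_s}(\gamma_Z)_{m'_s,l'_s}, \qquad \sum_{s=1}^{j}\alpha_s=j.$$
Taking the $(\mathscr{H}_T^d)^{\otimes j}$-norm, then the $L^p$-norm, and applying generalized H\"{o}lder with sufficiently large exponents on each factor reduces the estimation to a product of $L^{p_s}$-norms. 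The $(\gamma_Z)^{-1}_{m_s,l_s}$ factors are estimated by induction on $k$ (Step 1 is the base case), and the $\|D^{\alpha_s}(\gamma_Z)_{m'_s,l'_s}\|_{L^{p_s}}$ factors are estimated by Proposition \ref{prop2017-09-19-2}. Crucially, neither bound depends on the derivative order $\alpha_s$, only on the bloc in which the index pair sits.

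\emph{Step 3 (bookkeeping) and main obstacle.} The only nontrivial point is to verify that in every term of the expansion, the bloc-exponents of the $(\gamma_Z)^{-1}$ factors (which are $0$, $-1/2$ or $-1$ by the inductive hypothesis) and of the $D^{\alpha_s}(\gamma_Z)$ factors (which are $0$, $1/2$ or $1$ by Proposition \ref{prop2017-09-19-2}) sum to precisely the target exponent $0$, $-1/2$ or $-1$ corresponding to the bloc of $(m,l)$. This is a matter of tracking how bloc indices propagate through the alternating matrix products in the iterated formula above: each ``degraded'' index coming from blocs $(\mathbf{2})$, $(\mathbf{3})$, or $(\mathbf{4})$ is paired, along the chain of matrix multiplications, with a factor whose bloc membership supplies exactly the complementary scaling. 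This combinatorial cancellation is the heart of the argument and is precisely what is worked out in \cite[proof of Theorem 5.8]{DKN13}; since our Propositions \ref{prop2017-09-19-1}--\ref{prop2017-09-19-3} carry the same bloc-exponent pattern as the corresponding statements there, no change is required and the proof concludes.
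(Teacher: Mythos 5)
Your proposal is correct and takes essentially the same approach as the paper: the paper omits the proof, stating only that it follows the argument of \cite[Theorem 5.8]{DKN13} with Propositions \ref{prop2017-09-19-1}--\ref{prop2017-09-19-3} replacing the corresponding estimates there, which is exactly the substitution you carry out (with the $k=0$ case and the iterated-inverse-derivative expansion correctly identified as the two ingredients and the bloc-exponent bookkeeping correctly deferred to \cite{DKN13}).
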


We are now ready to prove Theorems \ref{theorem2017-08-17-1} and \ref{theorem2018-09-21-1}.

\begin{proof}[Proof of Theorem \ref{theorem2017-08-17-1}]
We recall from the proof of \cite[Theorem 1.6(b)]{DKN13} that
\begin{align*}
p_{s, y; t, x}(z_1, z_2) = p_Z(z_1, z_2 - z_1), \quad \mbox{for all} \quad z_1, \, z_2 \in \mathbb{R}^d,
\end{align*}
and
\begin{align}\label{eq2017-09-21-5}
p_Z(z_1, z_2 - z_1) &\leq \prod_{i = 1}^d\left(\mbox{P}\left\{|u_i(t, x) - u_i(s, y)| > |z_1^i - z_2^i|\right\}\right)^{\frac{1}{2d}}\nonumber \\
& \quad \times \|H_{(1, \ldots, 2d)}(Z, 1)\|_{0, 2},
\end{align}
where the random variable $H_{(1, \ldots, 2d)}(Z, 1)$ is given by the formula in \cite[Corollary 5.10]{DKN13}.
Using Chebyshev's inequality and Theorem \ref{prop2018-09-10-1}, we see that
\begin{align*}
\prod_{i = 1}^d\left(\mbox{P}\left\{|u_i(t, x) - u_i(s, y)| > |z_1^i - z_2^i|\right\}\right)^{\frac{1}{2d}} \leq c \left[\frac{|t - s|^{\frac{2 - \beta}{2}} + \|x - y\|^{2 - \beta}}{\|z_1 - z_2\|^2} \wedge 1\right]^{p/(2d)}.
\end{align*}
It remains to prove that
\begin{align}\label{eq2018-09-21-1}
& \|H_{(1, \ldots, 2d)}(Z, 1)\|_{0, 2} \leq c_T(|t - s|^{\frac{2 - \beta}{2}} + \|x - y\|^{2 - \beta})^{-d/2}.
\end{align}
The proof of \eqref{eq2018-09-21-1} is similar to that of \cite[Proposition 5.11]{DKN13} by using the continuity of the Skorohod integral $\delta$ (see \cite[Proposition 3.2.1]{Nua06} and \cite[(1.11) and p.131]{Nua98}) and H\"{o}lder's inequality for Malliavin norms (see \cite[Proposition 1.10, p.50]{Wat84}). Comparing with the estimate in \cite[Proposition 5.11]{DKN13}, we are able to remove the extra exponent $\eta$ because of the correct estimate on the inverse of the matrix $\gamma_Z$ in Proposition \ref{th2017-09-22-1}.
\end{proof}

\begin{proof}[Proof of  Theorems \ref{theorem2018-09-21-1}]
The proof is similar to that of \cite[Theorem 1.2(b)]{DKN13}. We remark that the estimate in \cite[Lemma 2.3]{DKN13} remains valid for $\tilde{\gamma} = \gamma = 2 - \beta$. Then by using Theorem \ref{theorem2017-08-17-1}, we follow along the same lines as in the proof of \cite[Theorem 1.2(b)]{DKN13} with $d + \eta$ there replaced by $d$, to obtain the optimal lower bounds on hitting probabilities in terms of capacity.
\end{proof}

\section{Proof of Theorems \ref{prop2018-09-10-1} and \ref{prop2018-09-11-1}} \label{section2018-09-20-1}

In this section, we establish the $L^p$-H\"{o}lder continuity of the solution and its Malliavin derivative. First, we recall some estimates on the Green kernel $S(t, x)$.
\begin{lemma}\label{lemma2018-09-10-1}
There exist some $M_0, \, m_0 >0$ such that for all $t > 0$, $x, \, y \in \mathbb{R}^k$,
\begin{align}\label{eq2018-09-10-3}
|S(t, x) - S(t, y)| \leq M_0 \|x - y\|t^{-\frac{k + 1}{2}}\big(e^{-\frac{\|x\|^2}{m_0t}} + e^{-\frac{\|y\|^2}{m_0t}}\big).
\end{align}
\end{lemma}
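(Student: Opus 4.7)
The plan is to combine the fundamental theorem of calculus along the segment from $y$ to $x$ with an elementary Gaussian absorption trick, using a case split depending on whether $\|x-y\|$ is small or large compared to $\sqrt{t}$. Writing $z(\tau) = y + \tau(x-y)$ for $\tau \in [0,1]$,
\begin{align*}
S(t,x) - S(t,y) = (x - y) \cdot \int_0^1 \nabla_z S(t, z(\tau))\, d\tau,
\end{align*}
so the factor $\|x-y\|$ in \eqref{eq2018-09-10-3} comes out automatically once the integrand is estimated. A direct computation gives $\nabla_z S(t,z) = -(z/t)\, S(t,z)$; then substituting $r = \|z\|/\sqrt{t}$ into the elementary inequality $r\, e^{-r^2/2} \leq 2\, e^{-r^2/4}$ yields
\begin{align*}
\|\nabla_z S(t,z)\| \leq C\, t^{-(k+1)/2} \exp\!\left(-\frac{\|z\|^2}{4t}\right).
\end{align*}

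Next, I would bound $\exp(-\|z(\tau)\|^2/(4t))$ by the endpoint Gaussians. From $\|x\|^2 \leq 2\|z\|^2 + 2\|x-z\|^2 \leq 2\|z\|^2 + 2\|x-y\|^2$, I obtain $\|z\|^2 \geq \|x\|^2/2 - \|x-y\|^2$, and hence
\begin{align*}
\exp\!\left(-\frac{\|z\|^2}{4t}\right) \leq \exp\!\left(\frac{\|x-y\|^2}{4t}\right) \exp\!\left(-\frac{\|x\|^2}{8t}\right),
\end{align*}
together with the analogous inequality obtained by swapping the roles of $x$ and $y$. In the regime $\|x-y\|^2 \leq t$, the prefactor $\exp(\|x-y\|^2/(4t))$ is bounded by a universal constant, so averaging the two endpoint bounds and integrating over $\tau$ gives \eqref{eq2018-09-10-3} with $m_0 = 8$. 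In the complementary regime $\|x-y\|^2 > t$, the mean value approach becomes inefficient, and I would instead use the crude triangle bound $|S(t,x) - S(t,y)| \leq S(t,x) + S(t,y) \leq (2\pi t)^{-k/2}(e^{-\|x\|^2/(2t)} + e^{-\|y\|^2/(2t)})$ and insert the factor $\|x-y\|/\sqrt{t} \geq 1$ for free, which immediately produces an estimate of exactly the desired form.

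The delicate point is the pointwise control of the Gaussian along the segment: when $x$ and $y$ are nearly antipodal (so that $z(\tau)$ passes close to the origin), no bound of the form $\exp(-\|z\|^2/(4t)) \leq C\exp(-c\|x\|^2/t)$ can hold uniformly in $\tau$. The case split is designed precisely to circumvent this obstruction—when $\|x-y\|$ is comparable to or larger than $\sqrt{t}$, the prefactor $\|x-y\|$ in the target estimate absorbs the loss incurred by using the trivial triangle inequality, while in the opposite regime the Gaussians at the endpoints really do dominate along the whole segment.
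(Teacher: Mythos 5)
Your proof is correct and is essentially the argument the paper has in mind: the paper's entire proof is the one-line remark that the estimate ``is a consequence of the mean-value theorem,'' and your writeup supplies the details that line omits. In particular, the Gaussian absorption step ($r e^{-r^2/2}\leq C e^{-r^2/4}$ followed by the endpoint comparison via $\|z\|^2\geq\|x\|^2/2-\|x-y\|^2$) and the case split $\|x-y\|^2\leq t$ versus $\|x-y\|^2>t$ are exactly what is needed to make the mean-value argument rigorous when the segment from $y$ to $x$ may pass near the origin, and your identification of that as the delicate point is accurate.
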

\begin{proof}
This is a consequence of the mean-value theorem.
\end{proof}

\begin{lemma}[{{\cite[Lemma 6.4]{CJKS13}}}]\label{lemma2018-09-10-2}
There exist some $N_0>0$ such that for all $t > 0$, $x, \, y \in \mathbb{R}^k$,
\begin{align}\label{eq2018-09-10-4}
\int_{\mathbb{R}^k}|S(t, x + z) - S(t, y + z)|dz \leq N_0 \big(\frac{\|x - y\|}{t^{1/2}} \wedge 1\big).
\end{align}
\end{lemma}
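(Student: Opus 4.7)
The plan is to prove the bound by considering two regimes and combining them. The factor $\|x-y\|/t^{1/2}\wedge 1$ separates into a trivial regime (when $\|x-y\|$ is comparable to or larger than $t^{1/2}$) and a regime where the mean-value inequality gives the linear behavior in $\|x-y\|$.

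First, I would establish the trivial bound. Since $S(t,\cdot)$ is a probability density on $\R^k$, translation invariance of Lebesgue measure yields $\int_{\R^k}|S(t,x+z) - S(t,y+z)|\,dz \leq \int_{\R^k}S(t,x+z)\,dz + \int_{\R^k}S(t,y+z)\,dz = 2$. This handles the $\wedge 1$ part of the bound.

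Next, for the linear bound in $\|x-y\|$, I would use the gradient estimate via the fundamental theorem of calculus applied along the segment from $y+z$ to $x+z$:
\begin{align*}
|S(t,x+z) - S(t,y+z)| \leq \|x-y\| \int_0^1 \|\nabla_w S(t,w)\big|_{w = y+z+r(x-y)}\|\,dr.
\end{align*}
Integrating in $z$ and using translation invariance of Lebesgue measure (after the change of variable $z \mapsto z - y - r(x-y)$ inside each inner integral) reduces the problem to estimating $\int_{\R^k}\|\nabla_w S(t,w)\|\,dw$. A direct computation gives $\nabla_w S(t,w) = -\frac{w}{t}S(t,w)$, so
\begin{align*}
\int_{\R^k}\|\nabla_w S(t,w)\|\,dw = \frac{1}{t}\int_{\R^k}\|w\|\, S(t,w)\,dw = \frac{1}{t}\,\mathrm{E}[\|W_t\|],
\end{align*}
where $W_t \sim N(0, t I_k)$. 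By scaling, $\mathrm{E}[\|W_t\|] = c_k t^{1/2}$, whence the integral is bounded by $c_k t^{-1/2}$. This yields $\int_{\R^k}|S(t,x+z) - S(t,y+z)|\,dz \leq c_k\|x-y\|\,t^{-1/2}$.

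Combining the two bounds gives $\int_{\R^k}|S(t,x+z) - S(t,y+z)|\,dz \leq N_0\,\bigl(\|x-y\|\,t^{-1/2}\wedge 1\bigr)$ with $N_0 := 2 \vee c_k$. There is no real obstacle here: the only small subtlety is making sure the change of variables applied after the mean-value inequality is justified, which follows from Fubini since the integrand is nonnegative. This argument also extends verbatim to the fundamental solution of any equation whose Green kernel is a probability density satisfying $\int \|\nabla S(t,w)\|\,dw \lesssim t^{-1/2}$.
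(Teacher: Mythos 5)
Your proof is correct and self-contained. Note that the paper does not actually prove this lemma---it is cited from \cite{CJKS13} without an internal argument---so there is no in-text proof to compare against; your argument supplies what the citation leaves implicit. The two-regime strategy (the $L^1$-normalization bound $\le 2$ for the $\wedge\,1$ side, and the gradient/FTC bound with $\nabla_w S(t,w) = -\tfrac{w}{t}S(t,w)$ and the scaling identity $\mathrm{E}\|W_t\| = c_k t^{1/2}$ for the $\|x-y\|t^{-1/2}$ side) is the canonical route and all steps check out: the Fubini/change-of-variable after the mean-value step is justified by nonnegativity of the integrand, and taking $N_0 = 2 \vee c_k$ dominates $\min(c_k u, 2)$ by $N_0(u\wedge 1)$ for every $u\ge 0$. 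One could instead first rescale $z\mapsto t^{1/2}z$ to reduce to $t=1$ and then run the same two bounds, which is cosmetically tidier but not substantively different.
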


\begin{lemma}\label{lemma2018-09-10-3}
There exists a constant $C_0>0$ such that for all $t > 0$, $x, \, y \in \mathbb{R}^k$,
\begin{align}\label{eq2018-09-10-5}
\int_{\mathbb{R}^k}\|z\|^{-\beta}|S(t, x + z) - S(t, y + z)|dz \leq C_0 t^{-\beta/2} \big(\frac{\|x - y\|}{t^{1/2}} \wedge 1\big).
\end{align}
\end{lemma}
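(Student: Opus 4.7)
The plan is to combine the pointwise gradient bound of Lemma \ref{lemma2018-09-10-1} with the $L^1$ bound of Lemma \ref{lemma2018-09-10-2}, by splitting the $z$-integral at the natural heat-kernel scale $\|z\| = t^{1/2}$. The only structural input I need besides those two lemmas is the local integrability of the Riesz kernel, i.e.\ $\beta < k$, which is exactly the standing assumption \eqref{eq2018-09-11-3}.

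First I would establish a uniform-in-$z$ bound on the difference of the heat kernels. Lemma \ref{lemma2018-09-10-1} gives
\begin{align*}
|S(t, x+z) - S(t, y+z)| \leq 2 M_0 \|x-y\| \, t^{-(k+1)/2},
\end{align*}
while the trivial bound $|S(t, x+z) - S(t, y+z)| \leq 2 S(t,0) = 2(2\pi t)^{-k/2}$ holds in general. Taking the minimum gives
\begin{align*}
\|S(t, x+\cdot) - S(t, y+\cdot)\|_{\infty} \leq C_1 \, t^{-k/2} \left( \frac{\|x-y\|}{t^{1/2}} \wedge 1 \right).
\end{align*}

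Now split the integral in \eqref{eq2018-09-10-5} at $\|z\|=t^{1/2}$. On the far region $\{\|z\| > t^{1/2}\}$, estimate $\|z\|^{-\beta} \leq t^{-\beta/2}$ and use Lemma \ref{lemma2018-09-10-2} to bound the remaining $L^1$-norm of the kernel difference by $N_0 (\|x-y\|/t^{1/2} \wedge 1)$. On the near region $\{\|z\| \leq t^{1/2}\}$, pull out the $L^{\infty}$ estimate above and compute
\begin{align*}
\int_{\|z\| \leq t^{1/2}} \|z\|^{-\beta}\, dz = c_k \, t^{(k-\beta)/2},
\end{align*}
which is finite precisely because $\beta < k$. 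The product is of the same order $t^{-\beta/2}(\|x-y\|/t^{1/2} \wedge 1)$ as the far contribution, and adding the two gives \eqref{eq2018-09-10-5}.

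The main (and only) obstacle is verifying that the $L^{\infty}$ bound on the difference really contains the minimum factor $(\|x-y\|/t^{1/2}) \wedge 1$ with the correct $t$-scaling, so that after multiplying by the local integral $c_k t^{(k-\beta)/2}$ the two regions combine cleanly into the single bound $C_0 t^{-\beta/2}(\|x-y\|/t^{1/2} \wedge 1)$; this is a straightforward but careful bookkeeping of the powers of $t$. No Fourier-side argument is needed, which is important because the absolute value inside the integrand rules out a direct Plancherel identity.
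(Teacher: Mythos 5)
Your proof is correct. The splitting at $\|z\| = t^{1/2}$ works: on $\{\|z\| \leq t^{1/2}\}$ the pointwise bound $\|S(t, x+\cdot) - S(t, y+\cdot)\|_{\infty} \leq C\,t^{-k/2}\big(\frac{\|x-y\|}{t^{1/2}} \wedge 1\big)$ (from Lemma \ref{lemma2018-09-10-1} and $|S| \leq S(t,0)$) pairs with $\int_{\|z\|\leq t^{1/2}}\|z\|^{-\beta}dz = c_k t^{(k-\beta)/2}$ to give $Ct^{-\beta/2}\big(\frac{\|x-y\|}{t^{1/2}} \wedge 1\big)$, and on $\{\|z\|>t^{1/2}\}$ the crude bound $\|z\|^{-\beta}\leq t^{-\beta/2}$ together with the $L^1$ estimate of Lemma \ref{lemma2018-09-10-2} gives the same order; both regions use only $\beta < k$, which is the standing hypothesis \eqref{eq2018-09-11-3}.

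However, this is a genuinely different route from the paper's. The paper does not split the domain and does not invoke Lemma \ref{lemma2018-09-10-2}. Instead it proves two \emph{global} bounds on the full integral, $C\|x-y\|\,t^{-(\beta+1)/2}$ (Lipschitz in $\|x-y\|$, no minimum) and $C\,t^{-\beta/2}$ (independent of $\|x-y\|$), and then takes the minimum of the two. To get the first bound uniformly in $x,y$, the paper needs to show that $x\mapsto\int_{\mathbb{R}^k}\|z\|^{-\beta}e^{-\|x+z\|^2/(m_0 t)}dz$ is maximized at $x=0$, which it does via a Fourier-side positive-definiteness argument. Your approach trades that positive-definiteness observation for a domain split plus Lemma \ref{lemma2018-09-10-2}; it is more elementary in that respect, and it never needs to integrate $\|z\|^{-\beta}$ against the Gaussian over all of $\mathbb{R}^k$, only over the ball $\{\|z\|\leq t^{1/2}\}$. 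The paper's version is self-contained modulo the positive-definiteness fact; yours is self-contained modulo Lemma \ref{lemma2018-09-10-2}, which is already in the paper and is cited elsewhere anyway. Both are equally sharp.
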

\begin{proof}
First, by Lemma \ref{lemma2018-09-10-1},
\begin{align}\label{eq2018-09-10-6}
&\int_{\mathbb{R}^k}\|z\|^{-\beta}|S(t, x + z) - S(t, y + z)|dz \nonumber \\
&\quad \leq M_0 \|x - y\|\, t^{-\frac{k + 1}{2}}\int_{\mathbb{R}^k}\|z\|^{-\beta}\big(e^{-\frac{\|x + z\|^2}{m_0t}} + e^{-\frac{\|y + z\|^2}{m_0t}}\big)dz \nonumber \\
& \quad \leq  2 M_0 \|x - y\|\, t^{-\frac{k + 1}{2}} \sup_{x \in \mathbb{R}^k}\int_{\mathbb{R}^k}\|z\|^{-\beta}e^{-\frac{\|x + z\|^2}{m_0t}}dz \nonumber \\
& \quad = 2 M_0 \|x - y\|\, t^{-\frac{k + 1}{2}}\int_{\mathbb{R}^k}\|z\|^{-\beta}e^{-\frac{\|z\|^2}{m_0t}}dz \nonumber \\
& \quad = C \|x - y\|\, t^{-\frac{\beta + 1}{2}},
\end{align}
where the first equality holds since the function $x \mapsto \int_{\mathbb{R}^k}\|z\|^{-\beta}e^{-\|x + z\|^2/(m_0t)}dz$ is a nonnegative definite function (its Fourier transform is a nonnegative function), which is therefore maximized at $x = 0$.

On the other hand, using the same arguments as above,
\begin{align}\label{eq2018-09-10-7}
&\int_{\mathbb{R}^k}\|z\|^{-\beta}|S(t, x + z) - S(t, y + z)|dz \nonumber \\
& \quad \leq \int_{\mathbb{R}^k}\|z\|^{-\beta}|S(t, x + z) + S(t, y + z)|dz \nonumber \\
& \quad \leq 2\sup_{x \in \mathbb{R}^k}\int_{\mathbb{R}^k}\|z\|^{-\beta}S(t, x + z)dz \nonumber \\
& \quad = 2\int_{\mathbb{R}^k}\|z\|^{-\beta}S(t, z)dz  = C \, t^{-\frac{\beta}{2}}.
\end{align}
Therefore, \eqref{eq2018-09-10-7} and \eqref{eq2018-09-10-6} imply \eqref{eq2018-09-10-5}.
\end{proof}

\begin{lemma}\label{lemma2018-09-10-4}
There exists a constant $C>0$ such that for all $s \geq 0$, $x, \, y \in \mathbb{R}^k$,
\begin{align}\label{eq2018-09-10-8}
& \int_0^sdr\int_{\mathbb{R}^k}dz\int_{\mathbb{R}^k}dv \, \|z - v\|^{-\beta}|S(r, x - z) - S(r, y - z)|\, |S(r, x - v) - S(r, y - v)| \nonumber \\
& \quad \leq C \|x - y\|^{2 - \beta}.
\end{align}
\end{lemma}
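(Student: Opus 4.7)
The plan is to reduce the triple integral to a one‑dimensional integral in $r$ by successively absorbing the spatial integrals using Lemmas \ref{lemma2018-09-10-3} and \ref{lemma2018-09-10-2}, and then performing the time integration by a direct split.

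First, for fixed $r > 0$ and $z \in \mathbb{R}^k$, I would handle the inner integral
\begin{equation*}
\int_{\mathbb{R}^k} dv\, \|z-v\|^{-\beta}\bigl|S(r, x-v)-S(r, y-v)\bigr|
\end{equation*}
by a translation $u = v - z$, followed by the reflection $u \mapsto -u$ (the latter leaves both $\|u\|^{-\beta}$ and Lebesgue measure invariant). This rewrites the integral in the form of Lemma \ref{lemma2018-09-10-3} applied with arguments $x - z$ and $y - z$, giving
\begin{equation*}
\int_{\mathbb{R}^k} dv\, \|z-v\|^{-\beta}\bigl|S(r, x-v)-S(r, y-v)\bigr| \leq C_0\, r^{-\beta/2}\Bigl(\frac{\|x-y\|}{r^{1/2}} \wedge 1\Bigr).
\end{equation*}
The crucial point is that this bound is independent of $z$.

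Next, I would pull this bound out of the $z$‑integral and apply Lemma \ref{lemma2018-09-10-2} to the remaining one‑dimensional kernel mass, producing a second factor of $N_0\,(\|x-y\|/r^{1/2} \wedge 1)$. The triple integral is therefore controlled by
\begin{equation*}
C \int_0^s r^{-\beta/2}\Bigl(\frac{\|x-y\|}{r^{1/2}} \wedge 1\Bigr)^{2} dr.
\end{equation*}

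To finish, I would split the $r$‑integral at $r_{0} = \|x-y\|^{2}$. On $(0, r_{0} \wedge s)$ the minimum equals $1$, and $\int_{0}^{r_{0}\wedge s} r^{-\beta/2} dr \leq C\|x-y\|^{2-\beta}$ thanks to $\beta < 2$. On $(r_{0}, s)$ (if nonempty) the minimum equals $\|x-y\|^{2}/r$, yielding $\|x-y\|^{2}\int_{r_{0}}^{s} r^{-\beta/2-1} dr \leq C\|x-y\|^{2-\beta}$ thanks to $\beta > 0$. Both contributions are of the claimed order.

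The step that requires care rather than depth is the change of variable that decouples the $v$‑integral from $z$; once this is recognized, the rest is a direct concatenation of the two preceding lemmas with an elementary scalar computation, and there is no real obstacle.
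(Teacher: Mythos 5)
Your argument is correct and is essentially identical to the paper's: the paper also performs the change of variables $v' = z - v$ (equivalent to your translation followed by reflection), applies Lemma \ref{lemma2018-09-10-3} to the decoupled $v$-integral, then Lemma \ref{lemma2018-09-10-2} to the $z$-integral, and finishes by splitting the resulting $r$-integral at $\|x-y\|^2$. The only cosmetic difference is that the paper first replaces $\int_0^s$ by $\int_0^\infty$ before splitting, while you retain the upper limit $s$; the computation is the same.
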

\begin{proof}
By the change of variable $[z - v = v']$, the integral in \eqref{eq2018-09-10-8} is equal to
\begin{align*}
& \int_0^sdr\int_{\mathbb{R}^k}dz \, |S(r, x - z) - S(r, y - z)| \nonumber \\
& \quad \quad \times \int_{\mathbb{R}^k}dv' \, \|v'\|^{-\beta} |S(r, x - z + v') - S(r, y -z + v')|.
\end{align*}
Applying Lemma \ref{lemma2018-09-10-3} first and then Lemma \ref{lemma2018-09-10-2}, this is bounded above by
\begin{align*}
& C \int_0^sdr\, r^{-\beta/2}\Big(\frac{\|x - y\|}{r^{1/2}} \wedge 1\Big)\int_{\mathbb{R}^k}dz \, |S(r, x - z) - S(r, y - z)| \nonumber \\ \nonumber \\
& \quad \leq  C \int_0^sdr\, r^{-\beta/2}\Big(\frac{\|x - y\|^{2}}{r} \wedge 1\Big)\leq  C \int_0^{\infty}dr\, r^{-\beta/2}\Big(\frac{\|x - y\|^{2}}{r} \wedge 1\Big) \nonumber \\
& \quad = C\int_0^{\|x - y\|^{2}}r^{-\beta/2}dr + C \|x - y\|^{2} \int_{\|x - y\|^{2}}^{\infty}r^{-\beta/2 - 1}dr \nonumber \\
& \quad = C \|x - y\|^{2 - \beta} + C \|x - y\|^{2 + 2\times (1 - 1 - \beta/2)} = 2C \|x - y\|^{2 - \beta}. \qedhere
\end{align*}
\end{proof}

\begin{lemma}\label{lemma2018-09-10-5}
There exists a constant $C>0$ such that for all $t \geq 0$, $\delta \geq 0$ and $x \in \mathbb{R}^k$,
\begin{align}\label{eq2018-09-10-9}
& \int_0^tdr\int_{\mathbb{R}^k}dz\int_{\mathbb{R}^k}dv \, \|z - v\|^{-\beta}|S(r + \delta, x - z) - S(r, x - z)|\, |S(r + \delta, x - v) - S(r, x - v)| \nonumber \\
& \quad \leq C \delta^{(2 - \beta)/2}.
\end{align}
\end{lemma}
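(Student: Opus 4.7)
The plan is to exploit the Fourier-analytic representation of the Riesz kernel, which linearises the triple integral and makes the temporal increment of the Gaussian heat kernel transparent. By the identity already used in Section \ref{section2018-09-21-3} (cf.\ Dalang \cite{Dal99}), for nice enough $f,g$,
\[
\int_{\mathbb{R}^k}\!\int_{\mathbb{R}^k}\|z-v\|^{-\beta}f(z)g(v)\,dz\,dv = c_{k,\beta}\int_{\mathbb{R}^k}\|\xi\|^{\beta-k}\mathscr{F}f(\xi)\overline{\mathscr{F}g(\xi)}\,d\xi.
\]
I would apply this with $f(z)=g(z)=S(r+\delta,x-z)-S(r,x-z)$. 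Since $\mathscr{F}S(t,\cdot)(\xi)=e^{-t\|\xi\|^2/2}$, a direct computation gives $|\mathscr{F}f(\xi)|^{2}=e^{-r\|\xi\|^{2}}(1-e^{-\delta\|\xi\|^{2}/2})^{2}$, so the inner double integral equals $c_{k,\beta}\int_{\mathbb{R}^k}\|\xi\|^{\beta-k}e^{-r\|\xi\|^{2}}(1-e^{-\delta\|\xi\|^{2}/2})^{2}\,d\xi$.

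Next I would swap the $r$- and $\xi$-integrations by Fubini and use the elementary estimate $\int_{0}^{t}e^{-r\|\xi\|^{2}}\,dr\le\|\xi\|^{-2}$ (uniformly in $t$), bounding the triple integral by
\[
c_{k,\beta}\int_{\mathbb{R}^k}\|\xi\|^{\beta-k-2}\bigl(1-e^{-\delta\|\xi\|^{2}/2}\bigr)^{2}\,d\xi.
\]
The change of variables $\eta=\sqrt{\delta}\,\xi$ contributes a Jacobian $\delta^{-k/2}$ together with $\|\xi\|^{\beta-k-2}=\delta^{(k+2-\beta)/2}\|\eta\|^{\beta-k-2}$, which cleanly pulls out the desired prefactor $\delta^{(2-\beta)/2}$. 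The claim therefore reduces to verifying that the dimensionless integral
\[
\int_{\mathbb{R}^k}\|\eta\|^{\beta-k-2}\bigl(1-e^{-\|\eta\|^{2}/2}\bigr)^{2}\,d\eta
\]
is finite. Passing to polar coordinates, near the origin the bracket behaves like $\|\eta\|^{4}/4$, so the radial integrand is $\sim\|\eta\|^{\beta+1}$, integrable because $\beta>0$; near infinity the bracket is bounded by $1$, so the radial integrand is $\sim\|\eta\|^{\beta-3}$, integrable because $\beta<2$. Both conditions use exactly the standing hypothesis $\beta\in(0,2\wedge k)$ from \eqref{eq2018-09-11-3}, and a quick scaling check confirms the exponent: the triple integral has ``units'' $T\cdot L^{-\beta}$, which with the parabolic scaling $L^{2}\sim T=\delta$ gives $\delta^{(2-\beta)/2}$.

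The only point requiring genuine care, which I expect to be the main technical obstacle, is justifying the Fourier identity at the level of rigour used in the paper. For each $r>0$, the function $f=S(r+\delta,x-\cdot)-S(r,x-\cdot)$ lies in $\mathscr{S}(\mathbb{R}^{k})$, so the identity is standard there. The $r=0$ endpoint contributes only a null set of $r$-values and can be bypassed by restricting the $r$-integration to $(\varepsilon,t)$ and letting $\varepsilon\downarrow 0$ using monotone convergence, since all upper bounds derived above are uniform in $\varepsilon$ and integrable near $r=0$. Alternatively, one may simply note that both integrands define nonnegative measurable quantities that agree as elements of the Hilbert space $\mathscr{H}$ for every $r\ge 0$.
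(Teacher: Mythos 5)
Your Fourier-analytic approach is elegant and your scaling analysis correctly identifies the exponent $\delta^{(2-\beta)/2}$, but there is a genuine gap: the Fourier identity you invoke computes
\[
\int_{\mathbb{R}^k}\int_{\mathbb{R}^k}\|z-v\|^{-\beta}f(z)\,g(v)\,dz\,dv,
\]
whereas the lemma requires an estimate on the version with absolute values, $\int\int\|z-v\|^{-\beta}|f(z)|\,|g(v)|\,dz\,dv$. These two quantities differ here because the heat-kernel time increment $f(z)=S(r+\delta,x-z)-S(r,x-z)$ changes sign: since $\partial_t S(t,z)$ is positive exactly when $\|z\|^2>kt$, the increment is negative near $x$ and positive away from $x$. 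Consequently $|f(z)|\,|f(v)|\ge f(z)f(v)$ with strict inequality on a set of positive measure, and since $\|z-v\|^{-\beta}\ge 0$, integrating gives the strict inequality $\||f|\|_{\mathscr{H}}^2 > \|f\|_{\mathscr{H}}^2$. What you have bounded via Plancherel is the smaller quantity $\|f\|_{\mathscr{H}}^2$, which does not control the one in the statement.

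The absolute values are not cosmetic: the lemma is invoked in the proof of Theorem \ref{prop2018-09-10-1} after Burkholder's inequality, at the step where Minkowski's inequality is applied with respect to the measure $\|z-v\|^{-\beta}|f(z)|\,|f(v)|\,dr\,dz\,dv$, and Minkowski requires this measure to be nonnegative; without the absolute values the ``measure'' would be signed. The paper's own proof proceeds quite differently, following the estimate of $I_1$ in the proof of \cite[Theorem 2]{Lik17}: it works with direct pointwise bounds on $|S(r+\delta,\cdot)-S(r,\cdot)|$ and replaces the logarithmic inequality there by $\log(1+\delta/t)\le C(\delta/t)^{(2-\beta)/2}$, keeping the absolute values throughout. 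To salvage your Fourier route you would need to replace $|f|$ by a pointwise dominating function of one sign with a tractable Fourier transform, for instance via $|S(r+\delta,z)-S(r,z)|\le\int_r^{r+\delta}|\partial_\theta S(\theta,z)|\,d\theta$ together with a Gaussian bound such as $|\partial_\theta S(\theta,z)|\le C\theta^{-1}S(c\theta,z)$ for suitable $C,c>0$; but that is a materially different argument from the one you have written.
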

\begin{proof}
The proof follows the same lines as the  estimate of $I_1$ in the proof of \cite[Theorem 2]{Lik17} applied to the case of the ordinary stochastic heat equation. We point out that,  in the case of the ordinary heat kernel, the argument for the estimate in \cite[(2.26)]{Lik17} does not apply since the lower bound in \cite[(1.7)]{Lik17} does not apply to the heat kernel. However, in the case of the ordinary heat kernel, the statement of \cite[(2.26)]{Lik17} (together with \cite[(2.27) and (2.28)]{Lik17}) remains valid by the following calculation:
\begin{align*}
\int_{\mathbb{R}^k}\|x\|^{- \beta}S(r, x)dx
&= \int_{\mathbb{R}^k}r^{-\beta/2}\|z\|^{- \beta} S(1, z)dz =C\, r^{-\beta/2}.
\end{align*}

Moreover, we can replace the inequality in \cite[(2.33)]{Lik17} by the following: for $r \in \,]0, 1]$, there exists for $C > 1$ such that for all $\mu >0$,
\begin{align}\label{eq2018-11-16-1}
0 < \log(1 + \mu) \leq C\, \mu^r.
\end{align}

We apply \eqref{eq2018-11-16-1} to conclude that
\begin{align}\label{eq2018-11-16-2}
0 < \log(1 + \delta/t) \leq C\, (\delta/t)^{(2 - \beta)/2}, \quad \mbox{for all} \,\,\, \delta \geq 0, \, t > 0.
\end{align}
Using \eqref{eq2018-11-16-2} instead of \cite[(2.35)]{Lik17}, the remaining calculation is the same as that in \cite[(2.29)--(2.32)]{Lik17}.
\end{proof}

Based on the above estimates on the Green kernel, we now prove Theorems \ref{prop2018-09-10-1} and \ref{prop2018-09-11-1}.

\begin{proof}[Proof of Theorem \ref{prop2018-09-10-1}]
By \eqref{eq2018-09-10-1}, it suffices to prove \eqref{eq2018-09-10-10} when $t - s$ and $\|x - y\|$ are small. Without loss of generality, we assume that $t - s \leq 1/2$ and $\|x - y\| \leq 1/2$.
Denote
$$I_{ij}(t, x) = \int_0^t\int_{\mathbb{R}^k}S(t - \theta, x - \eta)\sigma_{ij}(u(\theta, \eta))M^j(d\theta, d\eta).$$
From \eqref{eq2018-03-01-3},
\begin{align}\label{eq2018-09-10-11}
|u_i(t, x) - u_i(s, y)| &\leq \sum_{j = 1}^d|I_{ij}(t, x) - I_{ij}(s, x)| + \sum_{j = 1}^d|I_{ij}(s, x) - I_{ij}(s, y)| \nonumber \\
& \quad + \int_0^{t - s}\int_{\mathbb{R}^k}S(t - \theta, x - \eta)|b_i(u(\theta, \eta))|d\eta d\theta \nonumber \\
& \quad + \int_0^{s}\int_{\mathbb{R}^k}|S(t - \theta, x - \eta) - S(t - \theta, y - \eta)|\, |b_i(u(\theta, \eta))|d\eta d\theta \nonumber \\
& \quad + \int_0^{s}\int_{\mathbb{R}^k}|S(t - \theta, y - \eta) - S(s - \theta, y - \eta)|\, |b_i(u(\theta, \eta))|d\eta d\theta \nonumber \\
& := I_1 + I_2 + I_3 + I_4+ I_5.
\end{align}
By Burkholder's inequality, for any $p \geq 2$,
 \begin{align*}
\mbox{E}[|I_1|^p] &\leq c \sum_{j = 1}^d \mbox{E}\bigg[\Big|\int_s^td\theta\int_{\mathbb{R}^k}dz\int_{\mathbb{R}^k}dv \, \frac{1}{\|z - v\|^{\beta}} \nonumber \\
&\qquad \qquad \qquad\times S(t - \theta, x - z)\, |\sigma_{ij}(u(\theta, z))|\, S(t - \theta, x - v)\, |\sigma_{ij}(u(\theta, v))|\Big|^{\frac{p}{2}}\bigg]\nonumber \\
& \quad + c \sum_{j = 1}^d \mbox{E}\bigg[\Big|\int_0^sd\theta\int_{\mathbb{R}^k}dz\int_{\mathbb{R}^k}dv \, \frac{1}{\|z - v\|^{\beta}}|S(t - \theta, x - z) - S(s - \theta, x - z)| \nonumber \\
&\qquad \qquad \qquad\times |\sigma_{ij}(u(\theta, z))|\, |S(t - \theta, x - v) - S(s - \theta, x - v)|\, |\sigma_{ij}(u(\theta, v))|\Big|^{\frac{p}{2}}\bigg].
\end{align*}
Using Minkowski inequality and the Cauchy-Schwartz inequality, \eqref{eq2018-09-10-1} and the linear growth property of the functions $\sigma_{ij}$, this is bounded above by
\begin{align*}
 & c \bigg[\int_s^td\theta\int_{\mathbb{R}^k}dz\int_{\mathbb{R}^k}dv \, \frac{1}{\|z - v\|^{\beta}} S(t - \theta, x - z)S(t - \theta, x - v)\bigg]^{\frac{p}{2}}\nonumber \\
& \quad + c \bigg[\int_0^sd\theta\int_{\mathbb{R}^k}dz\int_{\mathbb{R}^k}dv \, \frac{1}{\|z - v\|^{\beta}}|S(t - \theta, x - z) - S(s - \theta, x - z)| \nonumber \\
&\qquad \qquad \qquad\qquad \qquad\times  |S(t - \theta, x - v) - S(s - \theta, x - v)|\bigg]^{\frac{p}{2}}.
\end{align*}
The first term above is equal to $c(t - s)^{(2 - \beta)p/4}$ by \cite[(6.3)]{DKN13} and the second term above is bounded above by $c(t - s)^{(2 - \beta)p/4}$ by Lemma \ref{lemma2018-09-10-5}. Hence for any $p \geq 2$,
\begin{align}\label{eq2018-09-10-12}
 \mbox{E}[|I_1|^p] &\leq c \,(t - s)^{\frac{(2 - \beta)p}{4}}.
\end{align}

Similarly, applying Burkholder's inequality and taking the absolute value inside,
\begin{align*}
\mbox{E}[|I_2|^p] &\leq c \sum_{j = 1}^d \mbox{E}\bigg[\Big|\int_0^sdr\int_{\mathbb{R}^k}dz\int_{\mathbb{R}^k}dv \, \|z - v\|^{-\beta}|S(r, x - z) - S(r, y - z)|\, |\sigma_{ij}(u(s-r, z))| \nonumber \\
& \qquad\qquad\qquad \times  |S(r, x - v) - S(r, y - v)|\, |\sigma_{ij}(u(s-r, v))| \Big|^{p/2}\bigg].
\end{align*}
By the Minkowski inequality with respect to the measure $\|z - v\|^{-\beta}|S(r, x - z) - S(r, y - z)|\,|S(r, x - v) - S(r, y - v)|drdvdz$, the Cauchy-Schwartz inequality, \eqref{eq2018-09-10-1} and the linear growth property of the functions $\sigma_{ij}$, this is bounded above by
\begin{align}\label{eq2018-09-10-13}
 &c\sup_{(t, x) \in [0, T] \times \mathbb{R}^k}\mbox{E}\left[1 + \|u(t, x)\|^p\right]\Big|\int_0^sdr\int_{\mathbb{R}^k}dz\int_{\mathbb{R}^k}dv \, \|z - v\|^{-\beta} \nonumber \\
 &\qquad \qquad\qquad\times   |S(r, x - z) - S(r, y - z)| \, |S(r, x - v) - S(r, y - v)| \Big|^{p/2} \nonumber \\
 & \quad \leq c \|x - y\|^{\frac{(2 - \beta)p}{2}},
\end{align}
where the inequality follows from Lemma \ref{lemma2018-09-10-4}.

For the estimate of $I_3$, using the Minkowski inequality with respect to the measure $S(t - \theta, x - \eta)d\eta d\theta$, \eqref{eq2018-09-10-1} and the linear growth property of the functions $b_{i}$, we have
\begin{align}\label{eq2018-09-10-14}
 \mbox{E}[|I_3|^p] &\leq c\sup_{(t, x) \in [0, T] \times \mathbb{R}^k}\mbox{E}\left[1 + \|u(t, x)\|^p\right]\Big(\int_0^{t - s}\int_{\mathbb{R}^k}S(t - \theta, x - \eta)d\eta d\theta\Big)^p \nonumber \\
 & = c\, |t - s|^p.
\end{align}
Moreover, using the Minkowski inequality with respect to the measure $|S(t - \theta, x - \eta) - S(t - \theta, y - \eta)|d\eta d\theta$, \eqref{eq2018-09-10-1} and the linear growth property of the functions $b_{i}$,
\begin{align}\label{eq2018-09-10-15}
 \mbox{E}[|I_4|^p] &\leq c\, \Big(\int_0^{s}\int_{\mathbb{R}^k}|S(t - \theta, x - \eta) - S(t - \theta, y - \eta)|d\eta d\theta\Big)^p \nonumber \\
 & \leq c\, \|x - y\|^p,
\end{align}
where the second inequality follows from \cite[Lemme A2]{Sai98}.

Similarly, by the Minkowski inequality with respect to the measure $|S(t - \theta, y - \eta) - S(s - \theta, y - \eta)|d\eta d\theta$, \eqref{eq2018-09-10-1} and the linear growth property of the functions $b_{i}$,
\begin{align}\label{eq2018-09-10-1500}
 \mbox{E}[|I_5|^p] &\leq c\, \Big(\int_0^{s}\int_{\mathbb{R}^k}|S(t - \theta, y - \eta) - S(s - \theta, y - \eta)|d\eta d\theta\Big)^p \nonumber \\
 & \leq c\, |(t - s)\log(t - s)|^p,
\end{align}
where the second inequality follows from \cite[Lemme A3]{Sai98}.

Hence, \eqref{eq2018-09-10-11}--\eqref{eq2018-09-10-1500} imply \eqref{eq2018-09-10-10}.
\end{proof}

\begin{proof}[Proof of Theorem \ref{prop2018-09-11-1}]
The proof is similar to that of Theorem \ref{prop2018-09-10-1} by using Lemmas \ref{lemma2018-09-10-4} and \ref{lemma2018-09-10-5}. From \eqref{eq2018-09-10-2}, we assume that $t - s \leq 1/2$ and $\|x - y\| \leq 1/2$.

We assume $m = 1$ and fix $p \geq 2$. Let
\begin{align*}
g_{t, x; s, y}(r, *) := S(t - r, x - *)1_{\{r < t\}} - S(s - r, y - *)1_{\{r < s\}}.
\end{align*}
Using \eqref{eq2018-03-01-4}, we see that
\begin{align*}
\|D(u_i(t, x) - u_i(s, y))\|^p_{L^p(\Omega; \mathscr{H}_T^d)} \leq c_p(A_{1,1} + A_{1,2} + A_{1,3} + A_{2,1} + A_{2,2} + A_{2,3} + A_{3,1} + A_{3,2}),
\end{align*}
where
\begin{align*}
A_{1,1} &= \mbox{E}\bigg[\Big(\int_s^tdr \sum_{j = 1}^d\|S(t - r, x - *)\sigma_{ij}(u(r, *))\|_{\mathscr{H}}^2\Big)^{p/2}\bigg], \\
A_{1,2} &= \mbox{E}\bigg[\Big(\int_0^sdr \sum_{j = 1}^d\|g_{t, x; s, x}(r, *)\sigma_{ij}(u(r, *))\|_{\mathscr{H}}^2\Big)^{p/2}\bigg], \\
A_{1,3} &= \mbox{E}\bigg[\Big(\int_0^sdr \sum_{j = 1}^d\|g_{s, x; s, y}(r, *)\sigma_{ij}(u(r, *))\|_{\mathscr{H}}^2\Big)^{p/2}\bigg], \\
A_{2,1} &= \mbox{E}\bigg[\Big\|\int_s^t\int_{\mathbb{R}^k}S(t - \theta, x - \eta)\sum_{j = 1}^dD(\sigma_{ij}(u(\theta, \eta)))M^j(d\theta, d\eta)\Big\|^p_{\mathscr{H}_T^d}\bigg], \\
A_{2,2} &= \mbox{E}\bigg[\Big\|\int_0^s\int_{\mathbb{R}^k}g_{t, x; s, x}(\theta, \eta)\sum_{j = 1}^dD(\sigma_{ij}(u(\theta, \eta)))M^j(d\theta, d\eta)\Big\|^p_{\mathscr{H}_T^d}\bigg], \\
A_{2,3} &= \mbox{E}\bigg[\Big\|\int_0^s\int_{\mathbb{R}^k}g_{s, x; s, y}(\theta, \eta)\sum_{j = 1}^dD(\sigma_{ij}(u(\theta, \eta)))M^j(d\theta, d\eta)\Big\|^p_{\mathscr{H}_T^d}\bigg], \\
A_{3,1} &= \mbox{E}\bigg[\Big\|\int_0^{t - s}\int_{\mathbb{R}^k}S(t - \theta, x - \eta)D(b_{i}(u(\theta, \eta)))d\theta d\eta\Big\|^p_{\mathscr{H}_T^d}\bigg], \\
A_{3,2} &= \mbox{E}\bigg[\Big\|\int_0^s\int_{\mathbb{R}^k}(S(t - \theta, x - \eta) - S(t - \theta, y - \eta))\,D(b_{i}(u(\theta, \eta)))d\theta d\eta\Big\|^p_{\mathscr{H}_T^d}\bigg], \\
A_{3,3} &= \mbox{E}\bigg[\Big\|\int_0^s\int_{\mathbb{R}^k}(S(t - \theta, y - \eta) - S(s - \theta, y - \eta))\,D(b_{i}(u(\theta, \eta)))d\theta d\eta\Big\|^p_{\mathscr{H}_T^d}\bigg].
\end{align*}

Using the Minkowski inequality, the Cauchy-Schwartz inequality, \eqref{eq2018-09-10-1} and the linear growth property of the functions $\sigma_{ij}$, we have
\begin{align}\label{eq2018-09-11-4}
A_{1, 1} &\leq c \sup_{(t, x) \in [0, T] \times \mathbb{R}^k}\mbox{E}\left[1 + \|u(t, x)\|^p\right]\Big(\int_s^tdr \int_{\mathbb{R}^k}\mu(d\xi)|\mathscr{F}S(t - r, x - *)(\xi)|^2\Big)^{p/2} \nonumber \\
& = c \Big(\int_s^t (t - r)^{-\beta/2}dr\Big)^{p/2} = c'(t - s)^{\frac{(2 - \beta)p}{4}},
\end{align}
where the first equality is due to \cite[(6.3)]{DKN13}.

Similarly,
\begin{align}\label{eq2018-09-11-5}
A_{1, 2} &\leq c \sup_{(t, x) \in [0, T] \times \mathbb{R}^k}\mbox{E}\left[1 + \|u(t, x)\|^p\right]\Big(\int_0^sdr \int_{\mathbb{R}^k}dz\int_{\mathbb{R}^k}dv \, \|z - v\|^{-\beta} \nonumber \\
& \qquad \times |S(t - r, x - z) - S(s - r, x - z)| \, |S(t - r, x - v) - S(s - r, x - v)| \Big)^{p/2} \nonumber \\
& \leq c (t - s)^{\frac{(2 - \beta)p}{4}},
\end{align}
where the last inequality follows from \eqref{eq2018-09-10-1} and Lemma \ref{lemma2018-09-10-5}.

Moreover, by the Minkowski inequality,  the Cauchy-Schwartz inequality, \eqref{eq2018-09-10-1} and the linear growth property of $\sigma_{ij}$, we have
\begin{align}\label{eq2018-09-11-5}
A_{1, 3} &\leq c \sup_{(t, x) \in [0, T] \times \mathbb{R}^k}\mbox{E}\left[1 + \|u(t, x)\|^p\right]\Big(\int_0^sdr \int_{\mathbb{R}^k}dz\int_{\mathbb{R}^k}dv \, \|z - v\|^{-\beta} \nonumber \\
& \qquad \times |S(s - r, x - z) - S(s - r, y - z)| \, |S(s - r, x - v) - S(s - r, y - v)| \Big)^{p/2} \nonumber \\
& \leq c \|x - y\|^{\frac{(2 - \beta)p}{2}},
\end{align}
where the last inequality follows from \eqref{eq2018-09-10-1} and Lemma \ref{lemma2018-09-10-4}.

The estimate of $A_{2, 1}$ is similar to that of $A_{1, 1}$. Indeed, by Burkholder's inequality for Hilbert-space-valued martingales (\cite[E.2. p.212]{MeM82}),
\begin{align*}
A_{2, 1} &\leq c \sum_{j = 1}^d \mbox{E}\bigg[\Big(\int_s^tdr\int_{\mathbb{R}^k}dz\int_{\mathbb{R}^k}dv  \, \|z - v\|^{-\beta}S(t - r, x - z)\|D(\sigma_{ij}(u(\theta, z)))\|_{\mathscr{H}_T^d}\nonumber \\
& \qquad \qquad \qquad \qquad \times S(t - r, x - v)\|D(\sigma_{ij}(u(\theta, v)))\|_{\mathscr{H}_T^d}\Big)^{p/2}\bigg]
\end{align*}
By hypothesis \textbf{P1}, the Minkowski inequality, the Cauchy-Schwarz inequality and \eqref{eq2018-09-10-2}, this is bounded above by
\begin{align}\label{eq2018-09-11-6}
& c \sum_{l = 1}^d\sup_{(t, x) \in [0, T] \times \mathbb{R}^k}\mbox{E}\left[\|D(u_l(t, x))\|^p_{\mathscr{H}_T^d}\right]\nonumber \\
& \quad \quad \times \Big(\int_s^tdr\int_{\mathbb{R}^k}dz\int_{\mathbb{R}^k}dv  \, \|z - v\|^{-\beta}S(t - r, x - z)S(t - r, x - v)\Big)^{p/2} \nonumber \\
& \quad = c \Big(\int_s^tdr \int_{\mathbb{R}^k}\mu(d\xi)|\mathscr{F}S(t - r, x - *)(\xi)|^2\Big)^{p/2} \nonumber \\
&\quad  = c \Big(\int_s^t (t - r)^{-\beta/2}dr\Big)^{p/2} = c'(t - s)^{\frac{(2 - \beta)p}{4}},
\end{align}
where the second equality is due to \cite[(6.3)]{DKN13}.

Furthermore,  by Burkholder's inequality for Hilbert-space-valued martingales (\cite[E.2. p.212]{MeM82}),
\begin{align*}
A_{2, 2} &\leq c \sum_{j = 1}^d \mbox{E}\bigg[\Big(\int_0^sdr\int_{\mathbb{R}^k}dz\int_{\mathbb{R}^k}dv  \, \|z - v\|^{-\beta}|S(t - r, x - z) - S(s - r, x - z)|\nonumber \\
& \quad   \times \|D(\sigma_{ij}(u(\theta, z)))\|_{\mathscr{H}_T^d} |S(t - r, x - v) - S(s - r, x - v)| \, \|D(\sigma_{ij}(u(\theta, v)))\|_{\mathscr{H}_T^d}\Big)^{p/2}\bigg].
\end{align*}
Similar to the estimate of $A_{1, 2}$, by hypothesis \textbf{P1}, the Minkowski inequality, the Cauchy-Schwarz inequality and \eqref{eq2018-09-10-2}, this is bounded above by
\begin{align}\label{eq2018-09-11-7}
& c \sum_{l = 1}^d\sup_{(t, x) \in [0, T] \times \mathbb{R}^k}\mbox{E}\left[\|D(u_l(t, x))\|^p_{\mathscr{H}_T^d}\right]\Big(\int_0^sdr\int_{\mathbb{R}^k}dz\int_{\mathbb{R}^k}dv  \, \|z - v\|^{-\beta}\nonumber \\
& \quad \quad \qquad\qquad \times |S(t - r, x - z) - S(s - r, x - z)| \, |S(t - r, x - v) - S(s - r, x - v)|\Big)^{p/2} \nonumber \\
&\quad = c \Big(\int_0^sdr \int_{\mathbb{R}^k}dz\int_{\mathbb{R}^k}dv \, \|z - v\|^{-\beta}|S(t - s + r, x - z) - S(r, x - z)| \nonumber \\
& \qquad \qquad\qquad \qquad\qquad \qquad \times  \, |S(t -s + r, x - v) - S(r, x - v)| \Big)^{p/2} \nonumber \\
&\quad \leq c (t - s)^{\frac{(2 - \beta)p}{4}},
\end{align}
where the last inequality follows from Lemma \ref{lemma2018-09-10-5}.

We move on to estimate $A_{2, 3}$. By Burkholder's inequality for Hilbert-space-valued martingales (\cite[E.2. p.212]{MeM82}),
\begin{align*}
A_{2, 3} &\leq c \sum_{j = 1}^d \mbox{E}\bigg[\Big(\int_0^sdr\int_{\mathbb{R}^k}dz\int_{\mathbb{R}^k}dv  \, \|z - v\|^{-\beta}|S(s - r, x - z) - S(s - r, y - z)|\nonumber \\
& \quad   \times \|D(\sigma_{ij}(u(\theta, z)))\|_{\mathscr{H}_T^d} |S(s - r, x - v) - S(s - r, y - v)| \, \|D(\sigma_{ij}(u(\theta, v)))\|_{\mathscr{H}_T^d}\Big)^{p/2}\bigg].
\end{align*}
Again, using hypothesis \textbf{P1}, the Minkowski inequality, the Cauchy-Schwarz inequality and \eqref{eq2018-09-10-2}, this is bounded above by
\begin{align}\label{eq2018-09-11-7}
& c \sum_{l = 1}^d\sup_{(t, x) \in [0, T] \times \mathbb{R}^k}\mbox{E}\left[\|D(u_l(t, x))\|^p_{\mathscr{H}_T^d}\right]\Big(\int_0^sdr\int_{\mathbb{R}^k}dz\int_{\mathbb{R}^k}dv  \, \|z - v\|^{-\beta}\nonumber \\
& \quad \quad \qquad\qquad \times |S(s - r, x - z) - S(s - r, y - z)| \, |S(s - r, x - v) - S(s - r, y - v)|\Big)^{p/2} \nonumber \\
&\quad = c \Big(\int_0^sdr \int_{\mathbb{R}^k}dz\int_{\mathbb{R}^k}dv \, \|z - v\|^{-\beta}|S(r, x - z) - S(r, y - z)| \nonumber \\
& \qquad \qquad\qquad \qquad\qquad \qquad \times  \, |S(r, x - v) - S(r, y - v)| \Big)^{p/2} \nonumber \\
&\quad \leq c \|x - y\|^{\frac{(2 - \beta)p}{2}},
\end{align}
where the last inequality follows from Lemma \ref{lemma2018-09-10-4}.

We proceed to estimate $A_{3, 1}$, $A_{3, 2}$ and $A_{3, 3}$. For $A_{3, 1}$, by hypothesis \textbf{P1}, the Minkowski inequality and \eqref{eq2018-09-10-2},
\begin{align}\label{eq2018-09-11-8}
A_{3, 1} & \leq c \sum_{l = 1}^d\sup_{(t, x) \in [0, T] \times \mathbb{R}^k}\mbox{E}\left[\|D(u_l(t, x))\|^p_{\mathscr{H}_T^d}\right]\Big(\int_0^{t - s}\int_{\mathbb{R}^k}S(t - \theta, x - \eta)d\theta d\eta\Big)^p \nonumber \\
& = c (t - s)^p.
\end{align}

Similar to the estimate of $I_4$ in the proof of Theorem \ref{prop2018-09-10-1}, by hypothesis \textbf{P1} and the Minkowski inequality,
\begin{align}\label{eq2018-09-11-800}
A_{3, 2} & \leq c \sum_{l = 1}^d\sup_{(t, x) \in [0, T] \times \mathbb{R}^k}\mbox{E}\left[\|D(u_l(t, x))\|^p_{\mathscr{H}_T^d}\right]\nonumber \\
& \qquad \qquad \times \Big(\int_0^{s}\int_{\mathbb{R}^k}|S(t - \theta, x - \eta) - S(t - \theta, y - \eta)|d\eta d\theta\Big)^p \nonumber \\
 & \leq c\, \|x - y\|^p,
\end{align}
where the second inequality follows from \eqref{eq2018-09-10-2} and  \cite[Lemme A2]{Sai98}.

Moreover, by hypothesis \textbf{P1} and the Minkowski inequality,
\begin{align}\label{eq2018-09-11-80011}
A_{3, 3} & \leq c \sum_{l = 1}^d\sup_{(t, x) \in [0, T] \times \mathbb{R}^k}\mbox{E}\left[\|D(u_l(t, x))\|^p_{\mathscr{H}_T^d}\right]\nonumber \\
& \qquad \qquad \times \Big(\int_0^{s}\int_{\mathbb{R}^k}|S(t - \theta, y - \eta) - S(s - \theta, y - \eta)|d\eta d\theta\Big)^p \nonumber \\
 & \leq c\, |(t - s)\log(t - s)|^p,
\end{align}
where the second inequality follows from \eqref{eq2018-09-10-2} and \cite[Lemme A3]{Sai98}.

Therefore, \eqref{eq2018-09-11-4}--\eqref{eq2018-09-11-80011} together prove \eqref{eq2018-09-11-1} for $m = 1$.
The case $m > 1$ follows along the same lines by induction using \eqref{eq2018-09-10-2} and the stochastic partial differential equations satisfied by the iterated derivatives (cf. \cite[Proposition 6.1]{NuQ07}).
\end{proof}

\section{Proof of Theorem \ref{prop2017-09-19-4000}} \label{section2018-09-21-1}

We first state an elementary fact that will be used several times later on.
\begin{lemma}\label{lemma2018-11-22-2}
Fix $\gamma \in \, ]0, 1[$ and $\mu > 0$.
\begin{itemize}
  \item [(a)] The function $x \mapsto (x + \mu)^{\gamma} - x^{\gamma}$ is nonincreasing on $[0, \infty[$ and the function $x \mapsto x^{\gamma} - (x - \mu)^{\gamma}$ is nonincreasing on $[\mu, \infty[$.
  \item [(b)] $(1 + x)^{\gamma} - 1 \leq \gamma x$ for all $x \geq 0$.
\end{itemize}
\end{lemma}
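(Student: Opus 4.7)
The statement is elementary and follows from basic monotonicity of the power function $x\mapsto x^{\gamma-1}$, so my plan is simply to carry out two short calculus computations.

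For part (a), I would just differentiate. For the first function, set $f(x) = (x+\mu)^{\gamma} - x^{\gamma}$, so that for $x>0$ one has $f'(x) = \gamma\bigl((x+\mu)^{\gamma-1} - x^{\gamma-1}\bigr)$. Since $\gamma-1<0$, the map $t\mapsto t^{\gamma-1}$ is strictly decreasing on $]0,\infty[$, and therefore $(x+\mu)^{\gamma-1} \leq x^{\gamma-1}$, giving $f'(x)\leq 0$. The value at $x=0$ is $\mu^\gamma$, and continuity of $f$ at $0$ settles the endpoint. For the second function, set $g(x) = x^{\gamma} - (x-\mu)^{\gamma}$ on $[\mu,\infty[$; the same computation gives $g'(x) = \gamma\bigl(x^{\gamma-1} - (x-\mu)^{\gamma-1}\bigr) \leq 0$, and at the endpoint $x=\mu$ one has $g(\mu) = \mu^{\gamma}$.

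For part (b), I would use concavity of $x\mapsto x^{\gamma}$ on $[0,\infty[$ (which holds because $\gamma\in\,]0,1[$). The tangent line to this function at the point $1$ has equation $y = 1 + \gamma(x-1)$, and concavity yields $x^{\gamma}\leq 1+\gamma(x-1)$ for every $x\geq 0$. Substituting $x=1+t$ with $t\geq 0$ rearranges to $(1+t)^{\gamma} - 1 \leq \gamma t$, which is exactly the inequality claimed. Alternatively, define $h(t) = 1+\gamma t - (1+t)^{\gamma}$; then $h(0)=0$ and $h'(t) = \gamma\bigl(1-(1+t)^{\gamma-1}\bigr)\geq 0$ for $t\geq 0$, so $h\geq 0$ on $[0,\infty[$.

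There is no real obstacle here: both parts reduce to the elementary fact that $t\mapsto t^{\gamma-1}$ is decreasing, used once as a monotonicity statement for the derivatives in (a) and once through concavity (tangent-line inequality) in (b). The only minor care needed is the endpoint $x=0$ in the first claim of (a), handled by continuity.
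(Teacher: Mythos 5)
Your proof is correct. The paper states this lemma without proof (it is labeled an "elementary fact"), and your derivative/concavity argument is the standard and expected justification; the care you take at the endpoint $x=0$ in part (a), where the derivative of $x^\gamma$ is unbounded, is exactly the right point to flag.
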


We recall from \cite[p.148]{DKN13} an estimate on the Malliavin derivative of the solution.
\begin{lemma}\label{lemma2018-11-22-1}
For all $q \geq 1$, $0 < \epsilon \leq s \leq T$ and $s - \epsilon \leq \rho \leq T$, there exists $C > 0$ such that for all $i \in \{1, \ldots, d\}$,
\begin{align}
\sup_{x \in \mathbb{R}^k}\mathrm{E}\left[\|D_{\cdot, *}(u_i(\rho, x))\|_{\mathscr{H}_{s - \epsilon, s}^d}^{2q}\right] & \leq C\Big(\int_{s - \epsilon}^{s \wedge \rho}d\theta \int_{\mathbb{R}^k}\mu(d\xi)|\mathscr{F}S(\rho - \theta, *)(\xi)|^2\Big)^q \label{eq2018-09-12-5}\\
& \leq C \epsilon^{\frac{(2 - \beta)q}{2}}.\label{eq2018-09-12-6}
\end{align}
\end{lemma}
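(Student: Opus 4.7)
The plan is to use the equation \eqref{eq2018-03-01-4} for the Malliavin derivative of $u_i(\rho,x)$, splitting
\[
D_r^{(l)}(u_i(\rho,x)) = \sigma_{il}(u(r,*))\,S(\rho-r,x-*) + a_i(l,r,\rho,x),
\]
and controlling the $L^{2q}(\Omega)$-norm of the two resulting pieces of $\|D_{\cdot,*}(u_i(\rho,x))\|_{\mathscr{H}_{s-\epsilon,s}^d}^2$ separately (after raising to the power $q$). The bound \eqref{eq2018-09-12-5} will be obtained via a Gronwall-type argument, and then \eqref{eq2018-09-12-6} will follow from the explicit formula $\int_{\mathbb{R}^k}\mu(d\xi)|\mathscr{F}S(t,*)(\xi)|^2 = c\,t^{-\beta/2}$ given in \cite[(6.3)]{DKN13}.

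For the leading term, using the boundedness of $\sigma_{ij}$ from \textbf{P1}, the Minkowski inequality and Plancherel's theorem,
\[
\mathrm{E}\Bigl[\Bigl(\int_{s-\epsilon}^{s\wedge\rho} dr \sum_{l=1}^d \|\sigma_{il}(u(r,*))\,S(\rho-r,x-*)\|_{\mathscr{H}}^2\Bigr)^q\Bigr] \leq C\Bigl(\int_{s-\epsilon}^{s\wedge\rho} d\theta \int_{\mathbb{R}^k}\mu(d\xi)|\mathscr{F}S(\rho-\theta,*)(\xi)|^2\Bigr)^q,
\]
uniformly in $x$, which is the desired right-hand side of \eqref{eq2018-09-12-5}. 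For the remainder $a_i$, I would apply Burkholder's inequality for Hilbert-space-valued martingales (\cite[E.2, p.212]{MeM82}) to the stochastic integral, together with the chain rule $D_r^{(l)}(\sigma_{ij}(u(\theta,\eta))) = \sum_k \partial_k\sigma_{ij}(u(\theta,\eta))\,D_r^{(l)}(u_k(\theta,\eta))$, whose norm is bounded by $C\|D_r^{(l)}(u(\theta,\eta))\|$ thanks to the bounded partial derivatives of $\sigma_{ij}$ and $b_i$ from \textbf{P1}. This yields, for $\Phi_\epsilon(\rho):=\sup_{x}\mathrm{E}\bigl[\|D_{\cdot,*}(u_i(\rho,x))\|_{\mathscr{H}_{s-\epsilon,s}^d}^{2q}\bigr]^{1/q}$, a recursion
\[
\Phi_\epsilon(\rho) \leq C\int_{s-\epsilon}^{s\wedge\rho} d\theta \int_{\mathbb{R}^k}\mu(d\xi)|\mathscr{F}S(\rho-\theta,*)(\xi)|^2 + C\int_{s-\epsilon}^{\rho} d\theta \, (\rho-\theta)^{-\beta/2}\, \Phi_\epsilon(\theta),
\]
and a standard Gronwall argument (the kernel $(\rho-\theta)^{-\beta/2}$ being integrable on $[s-\epsilon,\rho]$ for $\beta<2$) closes the estimate and gives \eqref{eq2018-09-12-5}.

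For \eqref{eq2018-09-12-6}, using \cite[(6.3)]{DKN13} and the substitution $u=\rho-\theta$ reduces the integral to $\int_{\rho-s\wedge\rho}^{\rho-s+\epsilon}u^{-\beta/2}\,du$. When $s-\epsilon\leq\rho<s$ this equals $\frac{2}{2-\beta}(\rho-s+\epsilon)^{(2-\beta)/2}\leq \frac{2}{2-\beta}\epsilon^{(2-\beta)/2}$, while for $s\leq\rho$ it equals $\frac{2}{2-\beta}\bigl[(\rho-s+\epsilon)^{(2-\beta)/2}-(\rho-s)^{(2-\beta)/2}\bigr]$, which is maximized at $\rho=s$ by Lemma \ref{lemma2018-11-22-2}(a), again yielding $\frac{2}{2-\beta}\epsilon^{(2-\beta)/2}$. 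Raising to the power $q$ concludes \eqref{eq2018-09-12-6}.

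The main obstacle is the Gronwall closure for the remainder: the stochastic integral defining $a_i$ runs over $[r,\rho]$ while the $r$-integration defining the norm is restricted to $[s-\epsilon,s\wedge\rho]$, so one must be careful to preserve the $\epsilon$-localization when iterating the equation for $D_r^{(l)}(u_i)$ rather than losing it to a crude bound that would only recover the non-optimal $\epsilon^{(2-\beta-\eta)q/2}$ rate.
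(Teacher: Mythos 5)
Your proposal follows the paper's appendix argument closely: the same decomposition via \eqref{eq2018-03-01-4}, the same direct bound on the leading term from \textbf{P1}, the same use of Burkholder's inequality for Hilbert-space-valued martingales together with the chain rule for the remainder, and the same Gronwall closure followed by the reduction to $\epsilon^{(2-\beta)q/2}$ via \cite[Lemma 6.1]{DKN13} and Lemma~\ref{lemma2018-11-22-2}(a). The only bookkeeping difference is that the paper explicitly splits into the cases $\rho\leq s$ and $\rho>s$ and, in the latter case, handles the portion of the kernel integral over $[s-\epsilon,s]$ separately via the global moment bound \eqref{eq2018-09-10-2} (absorbing it into the forcing term, so the Gronwall kernel integral runs only over $[s,\rho]$ and the forcing is exactly the right-hand side of \eqref{eq2018-09-12-5}), whereas your recursion keeps the kernel integral over the full range $[s-\epsilon,\rho]$.
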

\noindent{}Note that \eqref{eq2018-09-12-5} is exactly the estimate between $(6.2)$ and $(6.3)$ in \cite[p.148]{DKN13} and \eqref{eq2018-09-12-6} follows from the calculation below \cite[(6.3)]{DKN13}. We give the proof of \eqref{eq2018-09-12-5} in the appendix for reader's convenience.

We next give an estimate on $a_i(l, r, t, x)$, which is a refinement of \cite[Lemma 6.2]{DKN13}.

\begin{lemma} \label{lemma2018-09-12-1}
Assume \textbf{P1}. Fix $T > 0$, $c_0 > 1$ and $0 < \gamma_0 < 1$. For any $q \geq 1$, there exists a constant $c = c(c_0, \gamma_0, q, T) >0$ such that for every $0 <\epsilon \leq s \leq t \leq T$ with $t - s > c_0 \epsilon^{\gamma_0}$ and $x \in \mathbb{R}^k$,
\begin{align}\label{eq2018-09-12-2}
&W:= \mathrm{E}\bigg[\sup_{\xi \in \mathbb{R}^d: \|\xi\| \leq 1} \Big(\int_{s - \epsilon}^s dr \sum_{l = 1}^d\Big\|\sum_{i = 1}^da_i(l, r, t, x)\xi_i\Big\|_{\mathscr{H}}^2\Big)^q\bigg] \nonumber \\
&\qquad \leq c\, \epsilon^{\min\left\{\frac{2 - \beta}{2}(1 + \gamma_0), \, 1 - \frac{\beta\gamma_0}{2}\right\}q},
\end{align}
where $a_i(l, r, t, x)$ is defined in \eqref{eq2018-09-12-1}.
\end{lemma}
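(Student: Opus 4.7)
The plan is to decompose $a_i = A_i^{\mathrm m} + A_i^{\mathrm d}$ into its $M^j$-stochastic integral and its Lebesgue-integral parts from \eqref{eq2018-09-12-1}, and to estimate each separately; the elementary inequality $\|\sum_i a_i\xi_i\|_{\mathscr H}^2 \le d\sum_i\|a_i\|_{\mathscr H}^2$ (using $\|\xi\|\le 1$) reduces the supremum over $\xi$ to a sum of $\mathrm E\bigl[\|a_i(\cdot,\cdot,t,x)\|_{\mathscr H^d_{s-\epsilon,s}}^{2q}\bigr]$ over $i$.

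For the martingale part I would view $A_i^{\mathrm m}(\cdot,\cdot,t,x)$ as an $\mathscr H^d_{s-\epsilon,s}$-valued stochastic integral against $M^j$ with integrand $\Phi^j(\theta,\eta)(l,r,*) = \mathbf{1}_{\{r\le\theta\}}S(t-\theta,x-\eta)D_r^{(l)}(\sigma_{ij}(u(\theta,\eta)))(*)$. Burkholder's inequality for Hilbert-valued martingales (\cite[E.2, p.\ 212]{MeM82}), Cauchy--Schwarz applied to the $\mathscr H^d_{s-\epsilon,s\wedge\theta}$-valued inner product that appears, the chain rule for $\sigma_{ij}\circ u$ with $\partial_k\sigma_{ij}$ bounded by \textbf{P1}, and the estimate $ab\le(a^2+b^2)/2$ combined with the $y\leftrightarrow z$ symmetry reduce the bound to
\[
\mathrm E\bigl[\|A_i^{\mathrm m}\|^{2q}_{\mathscr H^d_{s-\epsilon,s}}\bigr] \le C\,\mathrm E\Bigl[\Bigl(\sum_k\int_0^t\!d\theta\iint\|y-z\|^{-\beta}S(t-\theta,x-y)S(t-\theta,x-z)\|Du_k(\theta,y)\|_{\mathscr H^d_{s-\epsilon,s\wedge\theta}}^2\,dy\,dz\Bigr)^q\Bigr].
\]
Applying Minkowski's $L^q(\Omega)$-inequality to move the expectation inside, invoking the bound \eqref{eq2018-09-12-5} of Lemma \ref{lemma2018-11-22-1} (which is uniform in $y$ and hence factors out of the spatial integral), and noting that $\iint\|y-z\|^{-\beta}S(t-\theta,x-y)S(t-\theta,x-z)\,dy\,dz = C(t-\theta)^{-\beta/2}$ by Plancherel, one obtains
\[
\mathrm E\bigl[\|A_i^{\mathrm m}\|^{2q}\bigr]^{1/q} \le C\int_0^t \Bigl(\int_{s-\epsilon}^{s\wedge\theta}(\theta-\tau)^{-\beta/2}\,d\tau\Bigr)(t-\theta)^{-\beta/2}\,d\theta.
\]

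The key step is to estimate this deterministic integral by splitting $\int_0^t = \int_{s-\epsilon}^s + \int_s^{s+\epsilon} + \int_{s+\epsilon}^t$. On the first two pieces Lemma \ref{lemma2018-11-22-2}(a) bounds the inner integral by $C\epsilon^{(2-\beta)/2}$, while $(t-\theta)^{-\beta/2}\le(t-s-\epsilon)^{-\beta/2}\le C\epsilon^{-\beta\gamma_0/2}$ by the hypothesis $t-s>c_0\epsilon^{\gamma_0}$, contributing in total $\lesssim \epsilon^{2-\beta(1+\gamma_0)/2}$. On the third piece Lemma \ref{lemma2018-11-22-2}(b) bounds the inner integral by $\tfrac{2-\beta}{2}\epsilon(\theta-s)^{-\beta/2}$, and the substitution $\theta=s+(t-s)v$ together with the finiteness of the Beta integral $B(1-\beta/2,1-\beta/2)$ (since $\beta<2$) yields a contribution $\lesssim\epsilon(t-s)^{1-\beta}\le C\epsilon^{1-\gamma_0(\beta-1)_+}$. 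A direct exponent comparison shows that both $2-\beta(1+\gamma_0)/2$ and $1-\gamma_0(\beta-1)_+$ are at least $\min\{(2-\beta)(1+\gamma_0)/2,\,1-\beta\gamma_0/2\}$ for $\beta\in(0,2)$ and $\gamma_0\in(0,1)$, so raising to the $q$-th power yields the desired estimate for $A_i^{\mathrm m}$.

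The drift part $A_i^{\mathrm d}$ is easier: Minkowski in $\mathscr H$, the boundedness of $\partial_k b_i$ and Cauchy--Schwarz with total mass $\int_r^t\!\int S(t-\theta,x-\eta)\,d\eta\,d\theta = t-r \le T$ reduce it to the same type of estimate as above but without the factor $(t-\theta)^{-\beta/2}$, yielding (after the same three-way split and using $\int S\,d\eta=1$) a bound of order $\epsilon^q$, which is dominated by the target since $\min\{\cdot,\cdot\}\le 1-\beta\gamma_0/2 \le 1$. The main obstacle is the balancing in the three-way split for the martingale term: only Lemma \ref{lemma2018-11-22-2}(a) tames the divergence $(\theta-s)^{-\beta/2}$ near $\theta=s$, while only Lemma \ref{lemma2018-11-22-2}(b) captures the tighter decay on $[s+\epsilon,t]$; their cross-over at the threshold $\theta=s+\epsilon$, together with the separation hypothesis $t-s>c_0\epsilon^{\gamma_0}$, is what produces the sharp exponent $\min\{(2-\beta)(1+\gamma_0)/2,\,1-\beta\gamma_0/2\}$ rather than the weaker exponent one would obtain from applying a single bound on the whole interval.
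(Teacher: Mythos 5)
Your proof is correct, and it takes a genuinely different route from the paper's, even though both start by decomposing $a_i$ into the stochastic-integral and drift parts, apply Burkholder's inequality for Hilbert-space-valued martingales, and invoke \eqref{eq2018-09-12-5} together with Lemma \ref{lemma2018-11-22-2}. The paper splits the time integral at $s + c_0\epsilon^{\gamma_0}$ (the threshold appearing in the separation hypothesis) into three pieces $I_{1,1}, I_{1,2}, I_{1,3}$, applies H\"older's inequality in the $dr$-integral to produce a factor $(\int\cdots)^{q-1}\times\int\cdots$, and then bounds the inner spatial convolution $\int_{s-\epsilon}^{s}(r-\theta)^{-\beta/2}d\theta$ pointwise by its value at the left endpoint $r = s+c_0\epsilon^{\gamma_0}$ via Lemma \ref{lemma2018-11-22-2}(a), finally absorbing $t-s-c_0\epsilon^{\gamma_0}$ into $T$. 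You instead apply Minkowski's $L^q(\Omega)$-inequality to bring the expectation directly inside, split at the more natural threshold $s+\epsilon$ (where Lemma \ref{lemma2018-11-22-2}(b) begins to apply), and on $[s+\epsilon,t]$ evaluate the resulting deterministic convolution $\int(\theta-s)^{-\beta/2}(t-\theta)^{-\beta/2}d\theta$ exactly via the Beta function $B(1-\beta/2,1-\beta/2)$ rather than by an endpoint estimate. Your exponent bookkeeping is correct: the contribution of $[s-\epsilon,s+\epsilon]$ is $O(\epsilon^{2-\beta(1+\gamma_0)/2})$ and the contribution of $[s+\epsilon,t]$ is $O(\epsilon^{1-\gamma_0(\beta-1)_+})$, and both exponents dominate $\min\{(2-\beta)(1+\gamma_0)/2,\,1-\beta\gamma_0/2\}$ for $\beta\in(0,2)$, $\gamma_0\in(0,1)$, so your approach actually yields a strictly sharper exponent than the one stated in the lemma (the lemma is therefore proved a fortiori). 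The drift-part estimate of order $\epsilon^q$ and the observation $\min\{\cdot,\cdot\}\le 1-\beta\gamma_0/2\le 1$ close the argument; the remaining case of $\epsilon$ bounded below is handled by \eqref{eq2018-09-10-2} as usual. In short, the Minkowski-plus-Beta-function route buys a cleaner split, a tighter estimate, and avoids the somewhat ad hoc choice of the paper's intermediate threshold $s+c_0\epsilon^{\gamma_0}$.
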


\begin{proof}
We adopt the same notation as in the proof of \cite[Lemma 6.2]{DKN13}. Use \eqref{eq2018-09-12-1} and the Cauchy-Schwartz inequality to get
\begin{align}\label{eq2018-09-12-3}
W \leq c\, \bigg(\Big(\sum_{i, j = 1}^d \mbox{E}\Big[\Big(\int_{s - \epsilon}^sdr \, \|W_1\|_{\mathscr{H}^d}^2\Big)^q\Big]\Big) + \Big(\sum_{i = 1}^d\mbox{E}\Big[\Big(\int_{s - \epsilon}^sdr \, \|W_2\|_{\mathscr{H}^d}^2\Big)^q\Big] \Big)\bigg),
\end{align}
where
\begin{align*}
W_1 &= \int_r^t\int_{\mathbb{R}^k}S(t - \theta, x - \eta)D_r(\sigma_{ij}(u(\theta, \eta)))M^j(d\theta, d\eta), \\
W_2 &=  \int_r^td\theta\int_{\mathbb{R}^k}d\eta \, S(t - \theta, x - \eta)D_r(b_{i}(u(\theta, \eta))).
\end{align*}
Then
\begin{align*}
\mbox{E}\Big[\Big(\int_{s - \epsilon}^sdr\, \|W_1\|_{\mathscr{H}^d}^2\Big)^q\Big] = \mbox{E}\left[\|W_1\|^{2q}_{\mathscr{H}_{s-\epsilon, s}^d}\right].
\end{align*}
Using hypothesis \textbf{P1}, Burkholder's inequality for Hilbert-space-valued martingales (\cite[E.2. p.212]{MeM82}) ensures that this is
\begin{align}\label{eq2018-09-12-4}
&\leq c\sum_{l = 1}^d \mbox{E}\Big[\Big(\int_{s - \epsilon}^tdr \int_{\mathbb{R}^k}dz\int_{\mathbb{R}^k}dv \, \|z - v\|^{-\beta} S(t - r, x - z)\|D_{\cdot, *}(u_l(r, z))\|_{\mathscr{H}_{s-\epsilon, s}^d}\nonumber \\
&\quad \quad\qquad\qquad\qquad \times  S(t - r, x - v)\|D_{\cdot, *}(u_l(r, v))\|_{\mathscr{H}_{s-\epsilon, s}^d}\Big)^q\Big]\nonumber \\
& \quad \leq I_{1,1} + I_{1,2} + I_{1,3},
\end{align}
where for $i = 1, 2, 3$,
\begin{align*}
I_{1,i} &:= c\sum_{l = 1}^d \mbox{E}\Big[\Big(\int_{a_i}^{b_i}dr \int_{\mathbb{R}^k}dz\int_{\mathbb{R}^k}dv \, \|z - v\|^{-\beta} S(t - r, x - z)\|D_{\cdot, *}(u_l(r, z))\|_{\mathscr{H}_{s-\epsilon, s}^d}\nonumber \\
&\quad \quad\qquad\qquad\qquad \times  S(t - r, x - v)\|D_{\cdot, *}(u_l(r, v))\|_{\mathscr{H}_{s-\epsilon, s}^d}\Big)^q\Big],
\end{align*}
and
\begin{align}\label{eq2018-11-22-20}
a_1 = s - \epsilon, \,\,b_1 = s,\,\, a_2 = s, \,\,b_2 = s + c_0\epsilon^{\gamma_0},\,\, a_3 = s + c_0\epsilon^{\gamma_0},\,\, b_3 = t.
\end{align}

We now estimate $I_{1, i}$. Applying H\"{o}lder's inequality and the Cauchy-Schwartz inequality,
\begin{align}\label{eq2018-11-22-19}
I_{1,i} &\leq c\sum_{l = 1}^d\Big(\int_{a_i}^{b_i}dr\int_{\mathbb{R}^k}\mu(d\xi)|\mathscr{F}S(t - r, x - *)(\xi)|^2\Big)^{q - 1} \nonumber \\
& \quad \times \int_{a_i}^{b_i}dr\int_{\mathbb{R}^k}\mu(d\xi)|\mathscr{F}S(t - r, x - *)(\xi)|^2\sup_{\eta \in \mathbb{R}^k}\mbox{E}\left[\|D_{\cdot, *}(u_l(r, \eta))\|_{\mathscr{H}_{s - \epsilon, s}^d}^{2q}\right].
\end{align}
In the case $i = 1$, we find that, by \eqref{eq2018-09-12-6} and \cite[(6.3)]{DKN13},
\begin{align}\label{eq2018-09-12-7}
I_{1, 1}& \leq c\, \epsilon^{\frac{(2 - \beta)q}{2}} \Big(\int_{s - \epsilon}^sdr\int_{\mathbb{R}^k}\mu(d\xi)|\mathscr{F}S(t - r, x - *)(\xi)|^2\Big)^{q} \nonumber \\
& = c\, \epsilon^{\frac{(2 - \beta)q}{2}}\big((t - s + \epsilon)^{(2 - \beta)/2} -  (t - s)^{(2 - \beta)/2}\big)^q \nonumber \\
& \leq c\, \epsilon^{\frac{(2 - \beta)q}{2}}\big((c_0\epsilon^{\gamma_0} + \epsilon)^{(2 - \beta)/2} -  (c_0\epsilon^{\gamma_0})^{(2 - \beta)/2}\big)^q \nonumber \\
&  = c\, \epsilon^{\frac{(2 - \beta)(1 + \gamma_0)q}{2}}\big((1 + c_0^{-1}\epsilon^{1 - \gamma_0})^{(2 - \beta)/2} -  1\big)^q \nonumber \\
& \leq  c\, \epsilon^{\frac{(2 - \beta)(1 + \gamma_0)q}{2} + (1 - \gamma_0)q},
\end{align}
where the second inequality follows from Lemma \ref{lemma2018-11-22-2}(a) because $t - s > c_0\epsilon^{\gamma_0}$, and the third inequality is due to Lemma \ref{lemma2018-11-22-2}(b).

Similarly, by \eqref{eq2018-11-22-19},
\begin{align}\label{eq2018-09-12-8}
I_{1,2} &\leq c\, \epsilon^{\frac{(2 - \beta)q}{2}}\big((t - s)^{(2 - \beta)/2} -  (t - s - c_0\epsilon^{\gamma_0})^{(2 - \beta)/2}\big)^q \nonumber \\
& \leq c\, \epsilon^{\frac{(2 - \beta)q}{2}}(c_0\epsilon^{\gamma_0})^{(2 - \beta)q/2} = c'\epsilon^{\frac{2 - \beta}{2}(1 + \gamma_0)q},
\end{align}
where the second inequality holds by Lemma \ref{lemma2018-11-22-2}(a) since $t - s > c_0\epsilon^{\gamma_0}$.

Moreover, by \eqref{eq2018-11-22-19}, using \cite[(6.3)]{DKN13} and \eqref{eq2018-09-12-5},
\begin{align} \label{eq2018-09-13-1}
I_{1,3} &\leq c\, (t - s - c_0\epsilon^{\gamma_0})^{\frac{(2 - \beta)(q - 1)}{2}} \int_{s + c_0\epsilon^{\gamma_0}}^tdr\int_{\mathbb{R}^k}\mu(d\xi)|\mathscr{F}S(t - r, x - *)(\xi)|^2 \nonumber \\
 & \qquad\qquad\qquad\qquad\qquad\qquad\qquad\qquad \times \Big(\int_{s - \epsilon}^{s}d\theta \int_{\mathbb{R}^k}\mu(d\xi)|\mathscr{F}S(r - \theta, *)(\xi)|^2\Big)^q \nonumber \\
  &= c\, (t - s - c_0\epsilon^{\gamma_0})^{\frac{(2 - \beta)(q - 1)}{2}} \int_{s + c_0\epsilon^{\gamma_0}}^tdr\int_{\mathbb{R}^k}\mu(d\xi)|\mathscr{F}S(t - r, x - *)(\xi)|^2 \nonumber \\
 & \qquad\qquad\qquad\qquad\qquad\qquad\qquad\qquad \times \big((r - s + \epsilon)^{\frac{2 - \beta}{2}} -  (r - s)^{\frac{2 - \beta}{2}}\big)^q \nonumber \\
 &  \leq c\, (t - s - c_0\epsilon^{\gamma_0})^{\frac{(2 - \beta)q}{2}} \big((c_0\epsilon^{\gamma_0} + \epsilon)^{\frac{2 - \beta}{2}} -  (c_0\epsilon^{\gamma_0})^{\frac{2 - \beta}{2}}\big)^q \nonumber \\
 &   \leq c\, \epsilon^{\frac{2 - \beta}{2}\gamma_0q}\big((1 + c_0^{-1}\epsilon^{1 - \gamma_0})^{\frac{2 - \beta}{2}} -  1\big)^q \nonumber \\
 &   \leq c\,\epsilon^{(\frac{2 - \beta}{2}\gamma_0 + 1 - \gamma_0)q} =  c\,\epsilon^{(1 - \beta\gamma_0/2)q},
\end{align}
where in the equality we use \cite[Lemma 6.1]{DKN13}, the second inequality follows from Lemma \ref{lemma2018-11-22-2}(a) since $r - s \geq c_0\epsilon^{\gamma_0}$ for all $r \in [s + c_0\epsilon^{\gamma_0}, t]$, in the third inequality we bound $t - s - c_0\epsilon^{\gamma_0}$ by $T$, and in the fourth inequality we use Lemma \ref{lemma2018-11-22-2}(b).

We proceed to estimate the second term in \eqref{eq2018-09-12-3}. First, by hypothesis \textbf{P1} and the Cauchy-Schwartz inequality,
\begin{align*}
\int_{s - \epsilon}^sdr \, \|W_2\|_{\mathscr{H}^d}^2 &\leq c \sum_{l = 1}^d\int_{s - \epsilon}^s dr\Big(\int_r^td\rho\int_{\mathbb{R}^k}d\xi \, S(t - \rho, x - \xi)\|D_r(u_l(\rho, \xi))\|_{\mathscr{H}^d}\Big)^2 \\
&\leq c \sum_{l = 1}^d(t - s + \epsilon)\int_{s - \epsilon}^sdr\int_r^td\rho\int_{\mathbb{R}^k}d\xi \, S(t - \rho, x - \xi)\|D_r(u_l(\rho, \xi))\|^2_{\mathscr{H}^d} \\
& \leq c \sum_{l = 1}^d\int_{s - \epsilon}^td\rho\int_{\mathbb{R}^k}d\xi \, S(t - \rho, x - \xi)\|D_{\cdot, *}(u_l(\rho, \xi))\|^2_{\mathscr{H}_{s - \epsilon, s}^d}.
\end{align*}
Therefore,
\begin{align*}
\mbox{E}\Big[\Big(\int_{s - \epsilon}^sdr \, \|W_2\|_{\mathscr{H}^d}^2\Big)^q\Big] \leq c\,(I_{2,1} + I_{2,2} + I_{2,3}),
\end{align*}
where for $i = 1, 2, 3$,
\begin{align*}
I_{2,i} &= \sum_{l = 1}^d\mbox{E}\Big[\Big(\int_{a_i}^{b_i}d\rho\int_{\mathbb{R}^k}d\xi \, S(t - \rho, x - \xi)\|D_{\cdot, *}(u_l(\rho, \xi))\|^2_{\mathscr{H}_{s - \epsilon, s}^d}\Big)^q\Big],
\end{align*}
and $a_i$, $b_i$ are defined in \eqref{eq2018-11-22-20}.
By H\"{o}lder's inequality,
\begin{align} \label{eq2018-11-23-1}
I_{2,i} &\leq c  \sum_{l = 1}^d\Big(\int_{a_i}^{b_i}d\rho\int_{\mathbb{R}^k}d\xi \, S(t - \rho, x - \xi)\Big)^{q - 1} \nonumber \\
& \qquad \times \int_{a_i}^{b_i}d\rho\int_{\mathbb{R}^k}d\xi \, S(t - \rho, x - \xi)\mbox{E}\left[\|D_{\cdot, *}(u_l(\rho, \xi))\|_{\mathscr{H}_{s - \epsilon, s}^d}^{2q}\right].
\end{align}
In the case $i = 1$, by  \eqref{eq2018-09-12-6},
\begin{align}\label{eq2018-09-13-2}
I_{2,1} &\leq c \, \epsilon^{\frac{(2 - \beta)q}{2}}\Big(\int_{s - \epsilon}^{s}d\rho\int_{\mathbb{R}^k}d\xi \, S(t - \rho, x - \xi)\Big)^{q} = c \, \epsilon^{(2 - \beta/2)q}.
\end{align}
Similarly, by \eqref{eq2018-11-23-1},
\begin{align}\label{eq2018-09-13-3}
I_{2,2} &\leq c \, \epsilon^{\frac{(2 - \beta)q}{2}}\Big(\int_{s}^{s + c_0\epsilon^{\gamma_0}}d\rho\int_{\mathbb{R}^k}d\xi \, S(t - \rho, x - \xi)\Big)^{q} \nonumber \\
& = c\, \epsilon^{\gamma_0q}\epsilon^{\frac{(2 - \beta)q}{2}} = c \, \epsilon^{(1 + \gamma_0 - \beta/2)q}.
\end{align}
It remains to estimate $I_{2, 3}$. By  \eqref{eq2018-11-23-1}, \eqref{eq2018-09-12-5} and \cite[(6.3)]{DKN13},
\begin{align}\label{eq2018-09-13-4}
I_{2,3} & \leq c\, (t - s - c_0\epsilon^{\gamma_0})^{q - 1}\int_{s + c_0\epsilon^{\gamma_0}}^td\rho \,\Big(\int_{s - \epsilon}^{s \wedge \rho}d\theta \int_{\mathbb{R}^k}\mu(d\xi)|\mathscr{F}S(\rho - \theta, *)(\xi)|^2\Big)^q \nonumber \\
 & = c\, (t - s - c_0\epsilon^{\gamma_0})^{q - 1} \int_{s + c_0\epsilon^{\gamma_0}}^td\rho \,\big((\rho - s + \epsilon)^{\frac{2 - \beta}{2}} -  (\rho - s)^{\frac{2 - \beta}{2}}\big)^q \nonumber \\
 &  \leq c\, (t - s - c_0\epsilon^{\gamma_0})^{q} \big((c_0\epsilon^{\gamma_0} + \epsilon)^{\frac{2 - \beta}{2}} -  (c_0\epsilon^{\gamma_0})^{\frac{2 - \beta}{2}}\big)^q \nonumber \\
 &  \leq c\, \epsilon^{\frac{2 - \beta}{2}\gamma_0q}\big((1 + c_0^{-1}\epsilon^{1 - \gamma_0})^{\frac{2 - \beta}{2}} -  1\big)^q \nonumber \\
 &  \leq c\,\epsilon^{(\frac{2 - \beta}{2}\gamma_0 + 1 - \gamma_0)q} =  c\,\epsilon^{(1 - \beta\gamma_0/2)q},
\end{align}
where in the second inequality we use Lemma \ref{lemma2018-11-22-2}(a) since $\rho - s \geq c_0\epsilon^{\gamma_0}$ for all $\rho \in [s + c_0\epsilon^{\gamma_0}, t]$,  in the third inequality we bound $t - s - c_0\epsilon^{\gamma_0}$ by $T$, and in the fourth inequality we use Lemma \ref{lemma2018-11-22-2}(b).

Finally, we combine the estimates in \eqref{eq2018-09-12-7}--\eqref{eq2018-09-13-4} to obtain \eqref{eq2018-09-12-2}.
\end{proof}

We now prove Theorem \ref{prop2017-09-19-4000}.

\begin{proof}[Proof of Theorem \ref{prop2017-09-19-4000}]
The proof of this theorem follows lines similar to those of \cite[Proposition 5.6]{DKN13}.

\textbf{Case 1}. Assume $t - s > 0$ and $\|x - y\|^2 \leq t - s$. Fix $\epsilon \in \, ]0, \delta(t - s)[$, where $0 < \delta < 1$ is fixed; its specific value will be decided on later (see the line above \eqref{eq2018-09-14-15}). For $\xi = (\lambda, \mu) \in \mathbb{R}^{2d}$ with $\|\xi\|^2 = \|\lambda\|^2 + \|\mu\|^2 = 1$,  we write
\begin{align*}
\xi^T\gamma_Z\xi \geq J_1 + J_2,
\end{align*}
where
\begin{align}
J_1 &:= \int_{s - \epsilon}^sdr\sum_{l = 1}^d\Big\|\sum_{i = 1}^d(\lambda_i - \mu_i)[S(s - r, y - *)\sigma_{il}(u(r, *)) + a_i(l, r, s, y)] + W\Big\|_{\mathscr{H}}^2, \label{eq2018-09-17-2}\\
J_2 &:= \int_{t - \epsilon}^tdr\sum_{l = 1}^d \|W\|_{\mathscr{H}}^2,\label{eq2018-09-17-3}
\end{align}
where
\begin{align}
W := \sum_{i = 1}^d[\mu_i S(t - r, x - *)\sigma_{il}(u(r, *)) + \mu_ia_i(l, r, t, x)], \label{eq2018-09-17-1}
\end{align}
 and $a_i(l, r, t, x)$ is defined in \eqref{eq2018-09-12-1}.

We use the inequality
\begin{align}\label{eq2018-09-13-5}
\|a + b\|^2_\mathscr{H} \geq \frac{2}{3}\|a\|^2_{\mathscr{H}} - 2\|b\|^2_{\mathscr{H}},
\end{align}
subtract and add a "local" term to find that
$
J_2 \geq \frac{2}{3}J_2^{(1)} - 4(J_2^{(2)} + J_2^{(3)}),
$
where
\begin{align*}
J_2^{(1)} &= \sum_{l = 1}^d \int_{t - \epsilon}^tdr \int_{\mathbb{R}^k}dv\int_{\mathbb{R}^k}dz \, \|v - z\|^{-\beta} \\
& \qquad \qquad \times S(t - r, x - v)S(t - r, x - z)(\mu^{T} \cdot \sigma(u(r, x)))_l^2, \\
J_2^{(2)} &=  \sum_{l = 1}^d \int_{t - \epsilon}^tdr \int_{\mathbb{R}^k}dv\int_{\mathbb{R}^k}dz \, \|v - z\|^{-\beta}S(t - r, x - v)S(t - r, x - z) \\
& \qquad \qquad \times \big(\mu^{T} \cdot [\sigma(u(r, v)) - \sigma(u(r, x))]\big)_l\big(\mu^{T} \cdot [\sigma(u(r, z)) - \sigma(u(r, x))]\big)_l, \\
J_2^{(3)} &= \int_{t - \epsilon}^tdr\sum_{l = 1}^d \Big\|\sum_{i = 1}^da_i(l, r, t, x)\mu_i\Big\|_{\mathscr{H}}^2.
\end{align*}
Now, hypothesis \textbf{P2} and \cite[Lemma 6.1]{DKN13} together imply that
\begin{align}\label{eq2018-09-13-6}
J_2^{(1)} \geq c \, \|\mu\|\epsilon^{\frac{2 - \beta}{2}}.
\end{align}
Similar to the calculation in \cite[(4.4)]{DKN13}, we can replace the exponent $\gamma$ there by $2 - \beta$ by using our Theorem \ref{prop2018-09-10-1} instead of their $(2.6)$ to obtain that, for any $q \geq 1$,
\begin{align}\label{eq2018-09-13-7}
\mbox{E}\Big[\sup_{\|\xi\| = 1}|J_2^{(2)}|^q\Big] \leq c \, \epsilon^{(2 - \beta)q}.
\end{align}
Moreover, applying \cite[Lemma 6.2]{DKN13} with $a = 1$ and $s = t$,
\begin{align}\label{eq2018-09-13-8}
\mbox{E}\Big[\sup_{\|\xi\| = 1}|J_2^{(3)}|^q\Big] \leq c \, \epsilon^{(2 - \beta)q}.
\end{align}
We will bound $J_1$ in two different subcases.

{\em Subcase A}: $\epsilon < \delta(t - s)^{1/\gamma_0}$ where $\gamma_0 \in \, ]\frac{1}{2}, 1[$.
 We use \eqref{eq2018-09-13-5} again and we subtract and add a "local" term to see that
\begin{align*}
J_1 \geq \frac{2}{3}J_1^{(1)} - 8(J_1^{(2)} + J_1^{(3)} + J_1^{(4)} + J_1^{(5)}),
\end{align*}
where
\begin{align*}
J_1^{(1)} & = \sum_{l = 1}^d \int_{s - \epsilon}^sdr \, ((\lambda - \mu)^{T} \cdot \sigma(u(r, y)))_l^2\int_{\mathbb{R}^k}d\xi \, \|\xi\|^{\beta - k}|\mathscr{F}S(s - r, y - *)(\xi)|^2, \\
J_1^{(2)} & = \sum_{l = 1}^d \int_{s - \epsilon}^sdr \int_{\mathbb{R}^k}dv\int_{\mathbb{R}^k}dz \, \|v - z\|^{-\beta} S(s - r, y - v)S(s - r, y - z)\\
& \qquad  \times \big((\lambda - \mu)^{T} \cdot [\sigma(u(r, v)) - \sigma(u(r, y))]\big)_l\big((\lambda - \mu)^{T} \cdot [\sigma(u(r, z)) - \sigma(u(r, y))]\big)_l, \\
J_1^{(3)} & =  \int_{s - \epsilon}^sdr \sum_{l = 1}^d\Big\|\sum_{i = 1}^d\mu_iS(t - r, x - *)\sigma_{il}(u(r, *))\Big\|_{\mathscr{H}}^2, \\
J_1^{(4)} & =  \int_{s - \epsilon}^sdr \sum_{l = 1}^d\Big\|\sum_{i = 1}^d(\lambda_i - \mu_i)a_i(l, r, s, y)\Big\|_{\mathscr{H}}^2, \\
J_1^{(5)} & =  \int_{s - \epsilon}^sdr \sum_{l = 1}^d\Big\|\sum_{i = 1}^d\mu_ia_i(l, r, t, x)\Big\|_{\mathscr{H}}^2.
\end{align*}

Hypothesi \textbf{P2} and \cite[Lemma 6.1]{DKN13} together imply that
\begin{align}\label{eq2018-09-13-10}
J_1^{(1)} &\geq c \, \|\lambda - \mu\|\epsilon^{\frac{2 - \beta}{2}}.
\end{align}

Similar to the term $J_2^{(2)}$ and \eqref{eq2018-09-13-7}, we obtain that, for any $q \geq 1$,
\begin{align}\label{eq2018-09-13-11}
\mbox{E}\Big[\sup_{\|\xi\| = 1}|J_1^{(2)}|^q\Big] \leq c \, \epsilon^{(2 - \beta)q}.
\end{align}

Using hypothesis \textbf{P1} and \cite[lemma 6.1]{DKN13},
\begin{align}\label{eq2018-09-13-12}
J_1^{(3)} &\leq c\, \int_{s - \epsilon}^sdr\int_{\mathbb{R}^k}d\xi \, \|\xi\|^{\beta - k}|\mathscr{F}S(t - r, x - *)(\xi)|^2\nonumber \\
& = c\, \big((t - s + \epsilon)^{\frac{2 - \beta}{2}} - (t - s)^{\frac{2 - \beta}{2}}\big)\nonumber \\
& \leq c \, \big((\delta^{-\gamma_0}\epsilon^{\gamma_0} + \epsilon)^{\frac{2 - \beta}{2}} - (\delta^{-\gamma_0}\epsilon^{\gamma_0})^{\frac{2 - \beta}{2}}\big)\nonumber \\
& = c \, \epsilon^{\frac{2 - \beta}{2}\gamma_0}\big((1 + \delta^{\gamma_0}\epsilon^{1 - \gamma_0})^{\frac{2 - \beta}{2}} - 1\big)\leq c' \, \epsilon^{1 - \beta\gamma_0/2},
\end{align}
where the second inequality holds by Lemma \ref{lemma2018-11-22-2}(a) because $t - s > \delta^{-\gamma_0}\epsilon^{\gamma_0}$, and the last inequality holds by Lemma \ref{lemma2018-11-22-2}(b).

Similar to the term $J_2^{(3)}$ and \eqref{eq2018-09-13-8}, we see that, for any $q \geq 1$,
\begin{align}\label{eq2018-09-13-13}
\mbox{E}\Big[\sup_{\|\xi\| = 1}|J_1^{(4)}|^q\Big] \leq c \, \epsilon^{(2 - \beta)q}.
\end{align}

To estimate $J_1^{(5)}$, since we are under the assumption $t - s > \delta^{-\gamma_0}\epsilon^{\gamma_0}$, by Lemma \ref{lemma2018-09-12-1}, for any $q \geq 1$,
\begin{align}\label{eq2018-09-13-14}
\mbox{E}\Big[\sup_{\|\xi\| = 1}|J_1^{(5)}|^q\Big] \leq c \, \epsilon^{\min\left\{\frac{2 - \beta}{2}(1 + \gamma_0), \, 1 - \frac{\beta\gamma_0}{2}\right\}q}.
\end{align}
From \eqref{eq2018-09-13-6}--\eqref{eq2018-09-13-14}, we conclude that in the subcase $\epsilon < \delta(t - s)^{1/\gamma_0}$,
\begin{align}\label{eq2018-09-13-15}
\inf_{\|\xi\| = 1}\xi^T\gamma_Z\xi \geq c\, \epsilon^{\frac{2 - \beta}{2}} - Z_{\epsilon}^1,
\end{align}
where $Z_{\epsilon}^1:= \sup_{\|\xi\| = 1}8(J_2^{(2)} + J_2^{(3)} + J_1^{(2)} + J_1^{(3)} + J_1^{(4)} + J_1^{(5)})$ satisfies that, for any $q \geq 1$,
\begin{align}\label{eq2018-09-13-16}
\mbox{E}\Big[\sup_{\|\xi\| = 1}|Z_{\epsilon}^1|^q\Big]\leq c \, \epsilon^{\min\left\{\frac{2 - \beta}{2}(1 + \gamma_0), \, 1 - \frac{\beta\gamma_0}{2}\right\}q}.
\end{align}

{\em Subcase B}: $\delta(t - s)^{1/\gamma_0} \leq \epsilon < \delta(t - s)$. In this subcase, we give a different estimate on $J_1$. Apply inequality \eqref{eq2018-09-13-5} and subtract and add a "local" term, to find that
\begin{align}
J_1 \geq \frac{2}{3}A_1 - 8(A_2 + A_3 + A_4 + A_5), \label{eq2018-09-17-4}
\end{align}
where
\begin{align}
A_1 &= \sum_{l = 1}^d\int_{s - \epsilon}^sdr \, \Big\|S(s - r, y - *)\big((\lambda - \mu)^{T}\cdot\sigma(u(r, y))\big)_l \nonumber\\
& \qquad \qquad \qquad \qquad + S(t - r, x - *)\big(\mu^{T}\cdot\sigma(u(r, x))\big)_l\Big\|_{\mathscr{H}}^2, \label{eq2018-09-17-5} \\
A_2 &= \sum_{l = 1}^d\int_{s - \epsilon}^sdr \,\Big\|S(s - r, y - *)\big((\lambda - \mu)^{T}\cdot[\sigma(u(r, *)) - \sigma(u(r, y))]\big)_l \Big\|_{\mathscr{H}}^2,\label{eq2018-09-17-7} \\
A_3 &= \sum_{l = 1}^d\int_{s - \epsilon}^sdr \,\Big\|S(t - r, x - *)\big(\mu^{T}\cdot[\sigma(u(r, *)) - \sigma(u(r, x))]\big)_l \Big\|_{\mathscr{H}}^2, \label{eq2018-09-17-8}\\
A_4 &= \sum_{l = 1}^d\int_{s - \epsilon}^sdr \,\Big\|\sum_{i = 1}^d(\lambda_i - \mu_i)a_i(l, r, s, y)\Big\|_{\mathscr{H}}^2, \label{eq2018-09-17-9}\\
A_5 &= \sum_{l = 1}^d\int_{s - \epsilon}^sdr \,\Big\|\sum_{i = 1}^d\mu_ia_i(l, r, t, x)\Big\|_{\mathscr{H}}^2.\label{eq2018-09-17-10}
\end{align}

Using the inequality $\|a + b\|^2_{\mathscr{H}} \geq \|a\|^2 + \|b\|^2 - 2 |\langle a, b\rangle_{\mathscr{H}}|$, we see that $A_1 \geq \tilde{A}_1 + \tilde{A}_2 - 2\tilde{B}_4$, where
\begin{align}
\tilde{A}_1 &= \sum_{l = 1}^d\int_{s - \epsilon}^sdr \,\Big\|S(s - r, y - *)\big((\lambda - \mu)^{T}\cdot\sigma(u(r, y))\big)_l \Big\|_{\mathscr{H}}^2,\label{eq2018-09-17-11} \\
\tilde{A}_2 &= \sum_{l = 1}^d\int_{s - \epsilon}^sdr\ \Big\|S(t - r, x - *)\big(\mu^{T}\cdot\sigma(u(r, x))\big)_l \Big\|_{\mathscr{H}}^2,\label{eq2018-09-17-12} \\
\tilde{B}_4 &= \sum_{l = 1}^d\int_{s - \epsilon}^sdr \,\Big|\big\langle S(s - r, y - *)\big((\lambda - \mu)^{T}\cdot\sigma(u(r, y))\big)_l, \nonumber\\
& \qquad \qquad \qquad \qquad\qquad \qquad S(t - r, x - *)\big(\mu^{T}\cdot\sigma(u(r, x))\big)_l  \big\rangle_{\mathscr{H}}\Big|.\label{eq2018-09-17-13}
\end{align}
 Hypothesis \textbf{P2} and \cite[Lemma 6.1]{DKN13} together imply that
\begin{align}\label{eq2018-09-13-60}
\tilde{A}_1 \geq c \, \|\lambda - \mu\|\epsilon^{\frac{2 - \beta}{2}}.
\end{align}
From \eqref{eq2018-09-13-6} and \eqref{eq2018-09-13-60}, since $\|\lambda\|^2 + \|\mu\|^2 = 1$, we see that
\begin{align}\label{eq2018-09-14-1}
J_1 + J_2 \geq c_0 \, \epsilon^{\frac{2 - \beta}{2}} - \frac{4}{3}\tilde{B}_4 - 8(J_2^{(2)} + J_2^{(3)} + A_2 + A_3 + A_4 + A_5).
\end{align}

We have bounded the two terms $J_2^{(2)}$ and $J_2^{(3)}$ in \eqref{eq2018-09-13-7} and \eqref{eq2018-09-13-8}. We now estimate the other five terms on the right-hand side of \eqref{eq2018-09-14-1}. As for the term $J_2^{(2)}$ and \eqref{eq2018-09-13-7}, we obtain that, for any $q \geq 1$,
\begin{align}\label{eq2018-09-13-110}
\mbox{E}\Big[\sup_{\|\xi\| = 1}|A_2|^q\Big] \leq c \, \epsilon^{(2 - \beta)q}.
\end{align}
Similar to $J_2^{(3)}$ and \eqref{eq2018-09-13-8},, we have for any $q \geq 1$,
\begin{align}\label{eq2018-09-13-1101}
\mbox{E}\Big[\sup_{\|\xi\| = 1}|A_4|^q\Big] \leq c \, \epsilon^{(2 - \beta)q}.
\end{align}
Again, by \cite[Lemma 6.2]{DKN13} with $a = 1$, for any $q \geq 1$,
\begin{align}\label{eq2018-09-13-11012}
\mbox{E}\Big[\sup_{\|\xi\| = 1}|A_5|^q\Big] &\leq c \, (t - s + \epsilon)^{\frac{2 - \beta}{2}q}\epsilon^{\frac{2 - \beta}{2}q} \nonumber \\
&\leq c \, (\delta^{-\gamma_0}\epsilon^{\gamma_0} + \epsilon)^{\frac{2 - \beta}{2}q}\epsilon^{\frac{2 - \beta}{2}q}  \leq c' \, \epsilon^{\frac{2 - \beta}{2}(1 + \gamma_0)q},
\end{align}
where, in the second inequality, we have used the assumption $t - s \leq \delta^{-\gamma_0}\epsilon^{\gamma_0}$.

We next bound the $q$-th moment of $A_3$. This is similar to the calculation in \cite[p.129]{DKN13}, but with their exponent $\gamma$ replaced by $2 - \beta$ since now we use our Theorem \ref{prop2018-09-10-1} instead of their $(2.6)$. Hence,
$
\mbox{E}[\sup_{\|\xi\| = 1}|A_3|^q] \leq c \, a_1 \times a_2,
$
where $a_1$ and $a_2$ are defined in \cite[p.129]{DKN13}, that is,
\begin{align}
a_1 &= \Big(\int_{s - \epsilon}^sdr \int_{\mathbb{R}^k}dv\int_{\mathbb{R}^k}dz \, \|v - z\|^{-\beta} S(t - r, x - v)S(t - r, x - z)\Big)^{q - 1},\label{eq2018-09-18-1} \\
a_2 &=\int_{s - \epsilon}^sdr \int_{\mathbb{R}^k}dv\int_{\mathbb{R}^k}dz \, \|v - z\|^{-\beta} S(t - r, x - v) \nonumber \\
&\qquad \qquad \qquad\qquad \qquad \times S(t - r, x - z)\|v -x\|^{\frac{2 - \beta}{2}q}\|z -x\|^{\frac{2 - \beta}{2}q}\label{eq2018-09-18-2}
\end{align}
By \cite[Lemma 6.1]{DKN13},
\begin{align}\label{eq2018-09-18-3}
a_1 &= c\, \big((t - s + \epsilon)^{\frac{2 - \beta}{2}} - (t - s)^{\frac{2 - \beta}{2}}\big)^{q - 1} \nonumber \\
& \leq c \, (t - s + \epsilon)^{\frac{2 - \beta}{2}(q - 1)}  \leq c \, (\delta^{-\gamma_0}\epsilon^{\gamma_0} + \epsilon)^{\frac{2 - \beta}{2}(q - 1)}\nonumber \\
& \leq c\, \epsilon^{\frac{2 - \beta}{2}\gamma_0(q - 1)},
\end{align}
where, in the second inequality, we use the assumption $t - s \leq \delta^{-\gamma_0}\epsilon^{\gamma_0}$.
For $a_2$, as in \cite[p.129]{DKN13}, we use the change of variables $\tilde{v} = \frac{x - v}{\sqrt{t - r}}$, $\tilde{z} = \frac{x - z}{\sqrt{t - r}}$, to see that
\begin{align}\label{eq2018-09-18-4}
a_2 &= \int_{s - \epsilon}^sdr \, (t - r)^{\frac{(2 - \beta)q}{2}- \frac{\beta}{2}} \int_{\mathbb{R}^k}d\tilde{v}\int_{\mathbb{R}^k}d\tilde{z}\, S(1, \tilde{v})S(1, \tilde{z})\|\tilde{v} - \tilde{z}\|^{-\beta}\|\tilde{v}\|^{\frac{(2 - \beta)q}{2}}\|\tilde{z}\|^{\frac{(2 - \beta)q}{2}}\nonumber \\
&= c \, \big((t - s + \epsilon)^{\frac{2 - \beta}{2}(1 + q)} - (t - s)^{\frac{2 - \beta}{2}(1 + q)}\big) \nonumber\\
 & \leq c \, (\delta^{-\gamma_0}\epsilon^{\gamma_0} + \epsilon)^{\frac{2 - \beta}{2}(1 + q)} \leq c'\, \epsilon^{\frac{2 - \beta}{2}\gamma_0(1 + q)},
\end{align}
where, in the first inequality, we use the assumption $t - s \leq \delta^{-\gamma_0}\epsilon^{\gamma_0}$. Therefore, from \eqref{eq2018-09-18-3} and \eqref{eq2018-09-18-4}, we obtain
\begin{align}\label{eq2018-09-14-10}
\mbox{E}\Big[\sup_{\|\xi\| = 1}|A_3|^q\Big] &\leq c \, \epsilon^{\frac{2 - \beta}{2}\gamma_0(q - 1) + \frac{2 - \beta}{2}\gamma_0(1 + q)} = c \, \epsilon^{(2 - \beta)\gamma_0q}.
\end{align}

We proceed to study the term $\tilde{B}_4$. Following the calculation in \cite[p.130]{DKN13}, by hypothesis \textbf{P1} and the semigroup property of $S(t, v)$,
\begin{align}\label{eq2018-09-14-11}
\tilde{B}_4 &\leq \tilde{c} \, \int_{0}^\epsilon dr \int_{\mathbb{R}^k}dv \, \|v\|^{-\beta}(t - s + 2r)^{-k/2}\exp\Big(-\frac{\|y - x + v\|^2}{2(t - s + 2r)}\Big) \nonumber \\
& \leq \tilde{c} (I_1 + I_2),
\end{align}
where
\begin{align*}
I_1 &=  \int_{0}^\epsilon dr \int_{\|v\|  \leq \theta_0\sqrt{r}}dv \, \|v\|^{-\beta}(t - s + 2r)^{-k/2}\exp\Big(-\frac{\|y - x + v\|^2}{2(t - s + 2r)}\Big), \nonumber \\
I_2 &=  \int_{0}^\epsilon dr \int_{\|v\|  > \theta_0\sqrt{r}}dv \, \|v\|^{-\beta}(t - s + 2r)^{-k/2}\exp\Big(-\frac{\|y - x + v\|^2}{2(t - s + 2r)}\Big).
\end{align*}
The constant $\theta_0$ above is a fixed and sufficiently large constant such that
\begin{align}\label{eq2018-09-14-12}
\frac{2\tilde{c}}{2 - \beta}\theta_0^{-\beta} < \frac{3c_0}{8},
\end{align}
where $c_0$ and $\tilde{c}$ are the constants in \eqref{eq2018-09-14-1} and \eqref{eq2018-09-14-11}. By the choice of $\theta_0$, we have
\begin{align}\label{eq2018-09-14-13}
\tilde{c}\, I_2 & \leq \tilde{c}\, \int_{0}^\epsilon dr\, \theta_0^{-\beta}r^{-\beta/2} \int_{\mathbb{R}^k}dv \, (t - s + 2r)^{-k/2}\exp\Big(-\frac{\|y - x + v\|^2}{2(t - s + 2r)}\Big) \nonumber \\
& = \tilde{c}\, \int_{0}^\epsilon dr\, \theta_0^{-\beta}r^{-\beta/2} = \frac{2\tilde{c}}{2 - \beta}\theta_0^{-\beta}\epsilon^{\frac{2 - \beta}{2}} \leq  \frac{3c_0}{8}\epsilon^{\frac{2 - \beta}{2}}.
\end{align}

As for $I_1$, it is bounded above by
\begin{align*}
\int_0^{\epsilon}dr \, (t - s + 2r)^{-k/2}\int_{\|v\|  \leq \theta_0\sqrt{r}}dv \, \|v\|^{-\beta},
\end{align*}
and the $dv$-integral is equal to $c_k\, \theta_0^{k - \beta}r^{(k - \beta)/2}$, so
\begin{align}
I_1 &\leq c_k\, \theta_0^{k - \beta}\int_0^{\epsilon}dr\, r^{(k - \beta)/2}(t - s + 2r)^{-k/2} \nonumber \\
&\leq c_k\, \theta_0^{k - \beta}\int_0^{\epsilon}dr\, (t - s + 2r)^{-\beta/2} \nonumber\\
& = c_k\, \theta_0^{k - \beta}(2 - \beta)^{-1} \epsilon^{\frac{2 - \beta}{2}}\big[(2 + (t - s)/\epsilon)^{\frac{2 - \beta}{2}} - ((t - s)/\epsilon)^{\frac{2 - \beta}{2}}\big] \nonumber\\
& \leq c_k\, \theta_0^{k - \beta}(2 - \beta)^{-1} \epsilon^{\frac{2 - \beta}{2}}\big[(2 + \delta^{-1})^{\frac{2 - \beta}{2}} - (\delta^{-1})^{\frac{2 - \beta}{2}}\big], \label{eq2018-09-14-16}
\end{align}
where, in the last inequality, we use the assumption $\epsilon < \delta(t - s)$ and Lemma \ref{lemma2018-11-22-2}(a). Since $\lim_{\delta \rightarrow 0} (2 + \delta^{-1})^{\frac{2 - \beta}{2}} - (\delta^{-1})^{\frac{2 - \beta}{2}} = 0$, we choose $\delta$ sufficiently small such that
\begin{align}\label{eq2018-09-14-15}
\tilde{c}\, c_k\, \theta_0^{k - \beta}(2 - \beta)^{-1} \big[(2 + \delta^{-1})^{\frac{2 - \beta}{2}} - (\delta^{-1})^{\frac{2 - \beta}{2}}\big] \leq \frac{3c_0}{16},
\end{align}
where $\tilde{c}$, $c_k$ and $c_0$ are the constants in \eqref{eq2018-09-14-11}, \eqref{eq2018-09-14-16} and \eqref{eq2018-09-14-1}.

From the estimates in \eqref{eq2018-09-13-7}, \eqref{eq2018-09-13-8}, \eqref{eq2018-09-14-1}--\eqref{eq2018-09-14-15}, we conclude that in the subcase $\delta(t - s)^{1/\gamma_0} < \epsilon < \delta(t - s)$,
\begin{align}\label{eq2018-09-14-17}
\inf_{\|\xi\| = 1}\xi^T\gamma_Z\xi \geq  \frac{c_0}{4}\epsilon^{\frac{2 - \beta}{2}} - Z_{\epsilon}^2,
\end{align}
where $Z_{\epsilon}^2:= 8\sup_{\|\xi\| = 1}(J_2^{(2)} + J_2^{(3)} + A_2 + A_3 + A_4 + A_5)$ satisfies that, for any $q \geq 1$,
\begin{align}\label{eq2018-09-13-160}
\mbox{E}\Big[|Z_{\epsilon}^2|^q\Big]\leq c \, \epsilon^{(2 - \beta)\gamma_0q}.
\end{align}

Therefore, in the \textbf{Case 1}, from \eqref{eq2018-09-13-15} and \eqref{eq2018-09-14-17}, for $\epsilon < \delta(t - s)$,
\begin{align}\label{eq2018-09-14-1700}
\inf_{\|\xi\| = 1}\xi^T\gamma_Z\xi \geq  c\, \epsilon^{\frac{2 - \beta}{2}} - Z_{\epsilon},
\end{align}
where, by \eqref{eq2018-09-13-16} and \eqref{eq2018-09-13-160}, $Z_{\epsilon}:= Z^1_{\epsilon}\, 1_{\{0 < \epsilon < \delta(t - s)^{1/\gamma_0}\}} + Z^2_{\epsilon}\, 1_{\{\delta(t - s)^{1/\gamma_0} \leq \epsilon < \delta(t - s)\}}$ satisfies that, for any $q \geq 1$,
\begin{align}\label{eq2018-09-14-18}
\mbox{E}\Big[|Z_{\epsilon}|^q\Big]\leq c \, \epsilon^{\min\left\{(2 - \beta)\gamma_0, \, 1 - \frac{\beta\gamma_0}{2}\right\}q}.
\end{align}

Since $\gamma_0 \in \, ]\frac{1}{2}, 1[$, it is clear that $\min\left\{(2 - \beta)\gamma_0, \, 1 - \beta\gamma_0/2\right\} > \frac{2 - \beta}{2}$. Therefore, we apply \cite[Proposition 3.5]{DKN09} to find that
\begin{align} \label{eq2018-09-14-19}
\mbox{E}\Big[\Big(\inf_{\|\xi\| = 1}\xi^T\gamma_Z\xi\Big)^{-2pd}\Big] &\leq c\, \left(\delta(t - s)\right)^{-2pd\frac{2 - \beta}{2}} = c'(t - s)^{-2pd\frac{2 - \beta}{2}}\nonumber \\
  &\leq \tilde{c}\left[|t - s|^{\frac{2 - \beta}{2}} + \|x - y\|^{2 - \beta}\right]^{-2pd},
\end{align}
whence follows the result in the case that $\|x - y\|^{2} \leq t - s < 1$.

\textbf{Case 2}. Now we work on the second case where $\|x - y\| > 0$ and $\|x - y\|^{2} \geq t - s \geq 0$.  Let $\epsilon > 0$ be such that $(1 + \alpha)\epsilon^{1/2} < \frac{1}{2}\|x - y\|$, where $\alpha > 0$ is large but fixed; its specific value will be decided on later (see the explanation for \eqref{eq2018-09-17-22} and \eqref{eq2018-09-17-23}).

From here on, \textbf{Case 2} is divided into two further subcases.

{\em Subcase A}. Suppose, in addition, that $\epsilon \geq \delta(t - s)$, where $\delta$ is chosen as in \textbf{Case 1}. In this subcase, we find that
\begin{align*}
\xi^T\gamma_Z\xi \geq J_1 + \tilde{J}_2,
\end{align*}
where
\begin{align*}
\tilde{J}_2 &:= \int_{(t - \epsilon)\vee s}^tdr\sum_{l = 1}^d \|W\|_{\mathscr{H}}^2
\end{align*}
with $W$ as defined in \eqref{eq2018-09-17-1}, and $J_1$ has the same expression as in \eqref{eq2018-09-17-2}.

We estimate $J_1$ in the same way as in \eqref{eq2018-09-17-4}, i.e.,
\begin{align*}
J_1 \geq \frac{2}{3}A_1 - 8(A_2 + A_3 + A_4 + A_5),
\end{align*}
where $A_1$--$A_5$ have the same expression as \eqref{eq2018-09-17-5}--\eqref{eq2018-09-17-10}. Moreover, as in Subcase A of \textbf{Case 1}, we see that $A_1 \geq \tilde{A}_1 + \tilde{A}_2 - 2\tilde{B}_4$, where $\tilde{A}_1$, $\tilde{A}_2$ and $\tilde{B}_4$ have the expressions as in \eqref{eq2018-09-17-11}--\eqref{eq2018-09-17-13}.

As for $\tilde{J}_2$, we apply \eqref{eq2018-09-13-5} and we subtract and add a "local" term to see that
\begin{align*}
\tilde{J}_2 \geq \frac{2}{3}B_1 - 4(B_2 + B_3),
\end{align*}
where
\begin{align*}
B_1 &= \sum_{l = 1}^d \int_{(t - \epsilon)\vee s}^tdr \, \|S(t - r, x - *)(\mu^{T} \cdot \sigma(u(r, x)))_l\|^2_{\mathscr{H}}, \\
B_2 &=  \sum_{l = 1}^d \int_{t - \epsilon}^tdr \, \|S(t - r, x - *)\big(\mu^{T} \cdot [\sigma(u(r, *)) - \sigma(u(r, x))]\big)_l\|_{\mathscr{H}}^2, \\
B_3 &=  \int_{t - \epsilon}^tdr\sum_{l = 1}^d \Big\|\sum_{i = 1}^da_i(l, r, t, x)\mu_i\Big\|_{\mathscr{H}}^2.
\end{align*}

By hypothesis \textbf{P2} and \cite[Lemma 6.1]{DKN13}
\begin{align}\label{eq2018-09-17-6}
\tilde{A}_2 &\geq c\, \|\mu\|^2\int_{s - \epsilon}^sdr \, \|S(t - r, x - *)\|_{\mathscr{H}}^2\nonumber\\
 &= c \, \|\mu\|^2\big((t - s + \epsilon)^{\frac{2 - \beta}{2}} - (t - s)^{\frac{2 - \beta}{2}}\big).
\end{align}
Similarly,
\begin{align}\label{eq2018-09-17-60}
B_1 &\geq c\, \|\mu\|^2\int_{(t - \epsilon) \vee s}^tdr \, \|S(t - r, x - *)\|_{\mathscr{H}}^2\nonumber\\
 &= c \, \|\mu\|^2(t - ((t - \epsilon)\vee s))^{\frac{2 - \beta}{2}} =  c \,\|\mu\|^2((t - s)\wedge \epsilon)^{\frac{2 - \beta}{2}}.
\end{align}
From \eqref{eq2018-09-13-60}, \eqref{eq2018-09-17-6} and \eqref{eq2018-09-17-60}, we see that
\begin{align}
\tilde{A}_1 + \tilde{A}_2  + B_1 &\geq c \, \Big(\|\lambda - \mu\|\epsilon^{\frac{2 - \beta}{2}} + \|\mu\|^2\big((t - s - \epsilon)^{\frac{2 - \beta}{2}} - (t - s)^{\frac{2 - \beta}{2}} + ((t - s)\wedge \epsilon)^{\frac{2 - \beta}{2}} \big)\Big) \nonumber \\
& = c \, \epsilon^{\frac{2 - \beta}{2}}\Big(\|\lambda - \mu\| + \|\mu\|^2\big((\frac{t - s}{\epsilon} + 1)^{\frac{2 - \beta}{2}} - (\frac{t - s}{\epsilon})^{\frac{2 - \beta}{2}} + ((\frac{t - s}{\epsilon})\wedge 1)^{\frac{2 - \beta}{2}} \big)\Big) \nonumber
\end{align}
Denote $\zeta(x):= (x + 1)^{\frac{2 - \beta}{2}} - x^{\frac{2 - \beta}{2}} + (x\wedge 1)^{\frac{2 - \beta}{2}}$, $x \in [0, \infty[$. Then it is clear that
$
\hat{c}_0 := \min_{0 \leq x < \infty}\zeta(x) > 0.
$
Thus we have
\begin{align}\label{eq2018-09-17-20}
\tilde{A}_1 + \tilde{A}_2  + B_1 &\geq c\, \epsilon^{\frac{2 - \beta}{2}}\Big(\|\lambda - \mu\|^{2} + \hat{c}_0\|\mu\|^2\Big) \geq c'\epsilon^{\frac{2 - \beta}{2}}.
\end{align}

The estimates of $A_2$ and $A_4$ in this subcase are the same as those in \eqref{eq2018-09-13-110} and \eqref{eq2018-09-13-1101} respectively. Hence we have
for any $q \geq 1$,
\begin{align}\label{eq2018-09-13-1102}
\mbox{E}\Big[\sup_{\|\xi\| = 1}|A_2|^q\Big] \leq c \, \epsilon^{(2 - \beta)q}, \quad\quad  \mbox{E}\Big[\sup_{\|\xi\| = 1}|A_4|^q\Big] \leq c \, \epsilon^{(2 - \beta)q}.
\end{align}
The term $B_2$ is similar to $J_2^{(2)}$ and $B_3$ is similar to $J_2^{(3)}$, so as in \eqref{eq2018-09-13-7} and \eqref{eq2018-09-13-8}, for any $q \geq 1$,
\begin{align}\label{eq2018-09-13-1109}
\mbox{E}\Big[\sup_{\|\xi\| = 1}|B_2|^q\Big] \leq c \, \epsilon^{(2 - \beta)q}, \quad\quad  \mbox{E}\Big[\sup_{\|\xi\| = 1}|B_3|^q\Big] \leq c \, \epsilon^{(2 - \beta)q}.
\end{align}
Moreover, as in \eqref{eq2018-09-13-11012}, by \cite[Lemma 6.2]{DKN13} with $a = 1$, for any $q \geq 1$,
\begin{align}\label{eq2018-09-17-11012}
\mbox{E}\Big[\sup_{\|\xi\| = 1}|A_5|^q\Big] &\leq c \, (t - s + \epsilon)^{\frac{2 - \beta}{2}q}\epsilon^{\frac{2 - \beta}{2}q} \nonumber \\
&\leq c \, (\delta^{-1}\epsilon + \epsilon)^{\frac{2 - \beta}{2}q}\epsilon^{\frac{2 - \beta}{2}q} =  c' \, \epsilon^{(2 - \beta)q}.
\end{align}
where, in the second inequality, we have used the assumption $t - s \leq \delta^{-1}\epsilon$.

We proceed to estimate the $q$-th moment of $A_3$. Again, we follow the calculation in \cite[p.129]{DKN13}, use our Theorem \ref{prop2018-09-10-1} instead of their $(2.6)$, to replace their exponent $\gamma$ by $2 - \beta$. That is,
$
\mbox{E}[\sup_{\|\xi\| = 1}|A_3|^q] \leq c \, a_1 \times a_2,
$
where $a_1$ and $a_2$ are defined in \cite[p.129]{DKN13} (see also our \eqref{eq2018-09-18-1} and \eqref{eq2018-09-18-2}). Similar to \eqref{eq2018-09-18-3}, we have, in this subcase $t - s \leq \delta^{-1}\epsilon$,
\begin{align}
a_1 &= c\, \big((t - s + \epsilon)^{\frac{2 - \beta}{2}} - (t - s)^{\frac{2 - \beta}{2}}\big)^{q - 1} \nonumber \\
& \leq c \, (t - s + \epsilon)^{\frac{2 - \beta}{2}(q - 1)}  \leq c \, (\delta^{-1}\epsilon + \epsilon)^{\frac{2 - \beta}{2}(q - 1)} =  c'\, \epsilon^{\frac{2 - \beta}{2}(q - 1)}.
\end{align}
Moreover, similar to \eqref{eq2018-09-18-4},
\begin{align}
a_2  &= c \, \big((t - s + \epsilon)^{\frac{2 - \beta}{2}(1 + q)} - (t - s)^{\frac{2 - \beta}{2}(1 + q)}\big) \nonumber\\
 & \leq c \, (\delta^{-1}\epsilon + \epsilon)^{\frac{2 - \beta}{2}(1 + q)} = c'\, \epsilon^{\frac{2 - \beta}{2}(1 + q)},
\end{align}
where, in the  inequality, we use the assumption $t - s \leq \delta^{-1}\epsilon$. Therefore,
\begin{align}\label{eq2018-09-14-101122}
\mbox{E}\Big[\sup_{\|\xi\| = 1}|A_3|^q\Big] &\leq c \, \epsilon^{\frac{2 - \beta}{2}(q - 1) + \frac{2 - \beta}{2}(1 + q)} =  c \, \epsilon^{(2 - \beta)q}.
\end{align}

Furthermore, under the assumption $(1 + \alpha)\epsilon^{1/2} < \frac{1}{2}\|x - y\|$, the estimate of $\tilde{B}_4$ follows exactly the same lines as in \cite[p.130-131]{DKN13}. Indeed, by hypothesis \textbf{P1} and using change of variable,
\begin{align*}
\tilde{B}_4 \leq c(I_1 + I_2),
\end{align*}
where
\begin{align*}
I_1 &=  \int_{0}^\epsilon dr \int_{\|v\|  < \sqrt{r}(1 + \alpha)}dv \, \|v\|^{-\beta}(t - s + 2r)^{-k/2}\exp\Big(-\frac{\|y - x + v\|^2}{2(t - s + 2r)}\Big), \nonumber \\
I_2 &=  \int_{0}^\epsilon dr \int_{\|v\|  \geq \sqrt{r}(1 + \alpha)}dv \, \|v\|^{-\beta}(t - s + 2r)^{-k/2}\exp\Big(-\frac{\|y - x + v\|^2}{2(t - s + 2r)}\Big).
\end{align*}
Using the change of variables $\rho = \frac{t - s + 2r}{\alpha^2\epsilon}$ and the inequality $t - s \leq \delta^{-1}\epsilon$,
we follow the calculation in \cite[p.130-131]{DKN13} to see that, under the assumption $(1 + \alpha)\epsilon^{1/2} < \frac{1}{2}\|x - y\|$,
\begin{align*}
I_1 \leq  \epsilon^{\frac{2 - \beta}{2}}\Phi_1(\alpha),
\end{align*}
where $\lim_{\alpha \rightarrow \infty}\Phi_1(\alpha) = 0$. The estimate of $I_2$ is the same as that in \cite[p.131]{DKN13} and we have
\begin{align*}
I_2 \leq c(1 + \alpha)^{-\beta}\epsilon^{\frac{2 - \beta}{2}}.
\end{align*}
We note that $\lim_{\alpha \rightarrow \infty}(1 + \alpha)^{-\beta} = 0$ and so we have shown that
\begin{align}\label{eq2018-09-17-22}
\tilde{B}_4 \leq \Phi(\alpha)\epsilon^{\frac{2 - \beta}{2}}, \quad \mbox{with} \quad \lim_{\alpha \rightarrow \infty} \Phi(\alpha) = 0.
\end{align}

Therefore, from \eqref{eq2018-09-17-20} and \eqref{eq2018-09-17-22}, we have shown that, in the subcase $\epsilon \geq \delta(t - s)$,
\begin{align*}
\inf_{\|\xi\| = 1}\xi^T\gamma_Z\xi \geq  c'\, \epsilon^{\frac{2 - \beta}{2}} - \frac{4}{3}  \Phi(\alpha)\epsilon^{\frac{2 - \beta}{2}} - \tilde{Z}_{\epsilon},
\end{align*}
where by \eqref{eq2018-09-13-1102}--\eqref{eq2018-09-14-101122},  $\tilde{Z}_{\epsilon}:= \sup_{\|\xi\| = 1}8(A_2 + A_3 +A_4 +A_5 + B_2 +B_3)$ satisfies that, for any $q \geq 1$,
\begin{align}\label{eq2018-09-17-24}
\mbox{E}\Big[|\tilde{Z}_{\epsilon}|^q\Big] \leq c \, \epsilon^{(2 - \beta)q}.
\end{align}
We choose $\alpha$ large enough so that $\frac{4}{3}  \Phi(\alpha) < \frac{c'}{2}$, to get
\begin{align}\label{eq2018-09-17-23}
\inf_{\|\xi\| = 1}\xi^T\gamma_Z\xi \geq  \frac{c'}{2}\, \epsilon^{\frac{2 - \beta}{2}} - \tilde{Z}_{\epsilon}.
\end{align}

{\em Subcase B}. In this final subcase, we suppose that $\epsilon < \delta(t - s) \leq \delta\|x - y\|^{2}$. Choose and fix $0 < \epsilon < \delta(t - s)$. During the course of our proof of \textbf{Case 1}, we established that
\begin{align}\label{eq2017-09-20-8}
\inf_{\|\xi\| = 1}\xi^T\gamma_Z\xi &\geq c\epsilon^{\frac{2 - \beta}{2}} - Z_{\epsilon},
\end{align}
where, for all $q \geq 1$,
\begin{align*}
\mbox{E}\left[|Z_{\epsilon}|^q\right] \leq c \, \epsilon^{\min\left\{(2 - \beta)\gamma_0, \, 1 - \frac{\beta\gamma_0}{2}\right\}q},
\end{align*}
with $\gamma_0 \in \, ]1/2, 1[$ be a fixed constant (see \eqref{eq2018-09-14-1700} and \eqref{eq2018-09-14-18}). This inequality remains valid in this Subcase B.

Combine Subcases A and B, and, in particular, \eqref{eq2018-09-17-23} and \eqref{eq2017-09-20-8}, to find that for all $0 < \epsilon < 2^{-2}(1 + \alpha)^{-2}\|x - y\|^{2}$,
\begin{align*}
\inf_{\|\xi\| = 1}\xi^T\gamma_Z\xi &\geq c \, \epsilon^{\frac{2 - \beta}{2}} -(Z_{\epsilon}\, 1_{\{\epsilon < \delta(t - s)\}} + \tilde{Z}_{\epsilon}\, 1_{\{t - s \leq \delta^{-1}\epsilon\}}).
\end{align*}
Because of this and \eqref{eq2018-09-14-18} and \eqref{eq2018-09-17-24}, by \cite[Proposition 3.5]{DKN09}, this implies that
\begin{align*}
\mbox{E}\left[\left(\inf_{ \|\xi\| = 1}\xi^T\gamma_Z\xi\right)^{-2pd}\right] &\leq c\, \|x - y\|^{2(-2dp)(\frac{2 - \beta}{2})}\\
&\leq c\left(\|x - y\|^{2} + |t - s|\right)^{(\frac{2 - \beta}{2})(-2dp)}\\
&\leq c\left(|t - s|^{\frac{2 - \beta}{2}} + \|x - y\|^{2 - \beta} \right)^{-2dp}.
\end{align*}
This completes the proof of Theorem \ref{prop2017-09-19-4000}.
\end{proof}

\begin{appendices}
\section{Proof of \eqref{eq2018-09-12-5}}
\begin{proof}[Proof of \eqref{eq2018-09-12-5}]
By \eqref{eq2018-03-01-4},
\begin{align}\label{eq2018-11-22-2}
&\mathrm{E}\left[\|D^{(l)}_{\cdot, *}(u_i(\rho, x))\|_{\mathscr{H}_{s - \epsilon, s}}^{2q}\right] = \mathrm{E}\left[\|D^{(l)}_{\cdot, *}(u_i(\rho, x))\|_{\mathscr{H}_{s - \epsilon, s\wedge \rho}}^{2q}\right] \nonumber \\
&\quad \leq c\,\mathrm{E}\bigg[\Big(\int_{s - \epsilon}^{s\wedge \rho}dr\, \|\sigma_{il}(u(r, *))S(\rho - r, x - *)\|_{\mathscr{H}}^2\Big)^q\bigg] \nonumber \\
& \quad \quad + c\,\mathrm{E}\bigg[\Big\| \int_{s - \epsilon}^{\rho}\int_{\mathbb{R}^k} S(\rho - \theta, x - \eta)1_{\{\theta > \cdot\}}\sum_{j = 1}^dD_{\cdot, *}^{(l)}(\sigma_{ij}(u(\theta, \eta)))M^j(d\theta, d\eta)\Big\|_{\mathscr{H}_{s - \epsilon, s\wedge \rho}}^{2q}\bigg] \nonumber \\
& \quad \quad + c\,\mathrm{E}\bigg[\Big\| \int_{s - \epsilon}^{\rho}\int_{\mathbb{R}^k} S(\rho - \theta, x - \eta)1_{\{\theta > \cdot\}}D_{\cdot, *}^{(l)}(b_{i}(u(\theta, \eta)))d\theta d\eta\Big\|_{\mathscr{H}_{s - \epsilon, s\wedge \rho}}^{2q}\bigg] \nonumber \\
& \quad =: A + B + C.
\end{align}

By hypothesis \textbf{P1}, it is clear that
\begin{align}\label{eq2018-11-22-3}
A \leq c \, \Big(\int_{s - \epsilon}^{s \wedge \rho}d\theta \int_{\mathbb{R}^k}\mu(d\xi)|\mathscr{F}S(\rho - \theta, *)(\xi)|^2\Big)^q.
\end{align}
Using Burkholder's inequality for Hilbert-space-valued martingales (\cite[E.2. p.212]{MeM82}) and hypothesis \textbf{P1}, we see that
\begin{align}\label{eq2018-11-22-4}
B &\leq c \, \sum_{j = 1}^d\mathrm{E}\bigg[\Big(\int_{s - \epsilon}^{\rho}d\theta \int_{\mathbb{R}^k}dz\int_{\mathbb{R}^k}dv \, \|z - v\|^{-\beta}S(\rho - \theta, x - z)\|D_{\cdot, *}u_j(\theta, z)\|_{\mathscr{H}^d_{s - \epsilon, s\wedge \theta}}\nonumber \\
&\qquad \qquad\qquad \qquad \times S(\rho - \theta, x - v)\|D_{\cdot, *}u_j(\theta, v)\|_{\mathscr{H}^d_{s - \epsilon, s\wedge \theta}}\Big)^q\bigg].
\end{align}
Moreover, using hypothesis \textbf{P1},
\begin{align}\label{eq2018-11-22-5}
C &\leq c \, \sum_{j = 1}^d\mathrm{E}\bigg[\Big(\int_{s - \epsilon}^{\rho}d\theta \int_{\mathbb{R}^k}d\eta \, S(\rho - \theta, x - \eta)\|D_{\cdot, *}u_j(\theta, \eta)\|_{\mathscr{H}^d_{s - \epsilon, s\wedge \theta}}\Big)^{2q}\bigg].
\end{align}

\textbf{Case 1}: $\rho \leq s$. Applying Minkowski's inequality with respect to the measure $\|z - v\|^{-\beta}S(\rho - \theta, x - z) S(\rho - \theta, x - v)dvdzd\theta$,
\begin{align}\label{eq2018-11-22-6}
B &\leq c \, \sum_{j = 1}^d\sup_{(t, x) \in [0, T] \times \mathbb{R}^k}\mbox{E}\left[\|D(u_j(t, x))\|^{2q}_{\mathscr{H}_T^d}\right] \nonumber \\
& \quad\quad \times \Big(\int_{s - \epsilon}^{\rho}d\theta \int_{\mathbb{R}^k}dz\int_{\mathbb{R}^k}dv \, \|z - v\|^{-\beta}S(\rho - \theta, x - z)S(\rho - \theta, x - v)\Big)^q\nonumber \\
& \leq c \, \Big(\int_{s - \epsilon}^{\rho}d\theta \int_{\mathbb{R}^k}\mu(d\xi)|\mathscr{F}S(\rho - \theta, *)(\xi)|^2\Big)^q,
\end{align}
where the second inequality holds by \eqref{eq2018-09-10-2}. Similarly, using Minkowski's inequality with respect to the measure $S(\rho - \theta, x - \eta)d\theta d\eta$,
\begin{align}
C &\leq c \, \sum_{j = 1}^d\sup_{(t, x) \in [0, T] \times \mathbb{R}^k}\mbox{E}\left[\|D(u_j(t, x))\|^{2q}_{\mathscr{H}_T^d}\right] \Big(\int_{s - \epsilon}^{\rho}d\theta \int_{\mathbb{R}^k}d\eta\, S(\rho - \theta, x - \eta)\Big)^{2q} \nonumber \\
& \leq c \, (\rho - s + \epsilon)^{2q} \leq c' \, (\rho - s + \epsilon)^{(2 - \beta)q/2} \nonumber  \\
& = c'' \, \Big(\int_{s - \epsilon}^{\rho}d\theta \int_{\mathbb{R}^k}\mu(d\xi)|\mathscr{F}S(\rho - \theta, *)(\xi)|^2\Big)^q,\label{eq2018-11-22-7}
\end{align}
where the second inequality holds by \eqref{eq2018-09-10-2} and the equality follows from \cite[Lemma 6.1]{DKN13}.

Hence, in \textbf{Case 1}, \eqref{eq2018-11-22-3}, \eqref{eq2018-11-22-6} and \eqref{eq2018-11-22-7} prove \eqref{eq2018-09-12-5}.
\vspace{0.2cm}

\textbf{Case 2}: $\rho > s$. Let $a_1 = s - \epsilon$, $b_1 = s$, $a_2 = s$, $b_2 = \rho$.

From \eqref{eq2018-11-22-4}, we bound $B$  by $B_1 + B_2$, where for $i = 1, 2$,
\begin{align*}
B_i &\leq c \, \sum_{j = 1}^d\mathrm{E}\bigg[\Big(\int_{a_i}^{b_i}d\theta \int_{\mathbb{R}^k}dz\int_{\mathbb{R}^k}dv \, \|z - v\|^{-\beta}S(\rho - \theta, x - z)\|D_{\cdot, *}u_j(\theta, z)\|_{\mathscr{H}^d_{s - \epsilon, s\wedge \theta}}\nonumber \\
&\qquad \qquad\qquad \qquad \times S(\rho - \theta, x - v)\|D_{\cdot, *}u_j(\theta, v)\|_{\mathscr{H}^d_{s - \epsilon, s\wedge \theta}}\Big)^q\bigg].
\end{align*}
From the calculation in \eqref{eq2018-11-22-6}, we have
\begin{align}\label{eq2018-11-22-9}
B_1 &\leq c \, \Big(\int_{s - \epsilon}^{s}d\theta \int_{\mathbb{R}^k}\mu(d\xi)|\mathscr{F}S(\rho - \theta, *)(\xi)|^2\Big)^q.
\end{align}
Applying H\"{o}lder's inequality with respect to the measure $\|z - v\|^{-\beta}S(\rho - \theta, x - z) S(\rho - \theta, x - v)dvdzd\theta$ and the Cauchy-Schwartz inequality,
\begin{align}\label{eq2018-11-22-10}
B_2 &\leq c \, \sum_{j = 1}^d\int_{s}^{\rho}d\theta \int_{\mathbb{R}^k}dz\int_{\mathbb{R}^k}dv \, \|z - v\|^{-\beta}S(\rho - \theta, x - z)S(\rho - \theta, x - v)\nonumber \\
&\qquad \qquad\qquad \qquad \times \sup_{x \in \mathbb{R}^k}\mathrm{E}\big[\|D_{\cdot, *}u_j(\theta, x)\|_{\mathscr{H}^d_{s - \epsilon, s\wedge \theta}}^{2q}\big] \nonumber \\
& = c \, \sum_{j = 1}^d\int_{s}^{\rho}d\theta\, (\rho - \theta)^{-\frac{\beta}{2}}\sup_{x \in \mathbb{R}^k}\mathrm{E}\big[\|D_{\cdot, *}u_j(\theta, x)\|_{\mathscr{H}^d_{s - \epsilon, s}}^{2q}\big],
\end{align}
where the equality follows from \cite[(6.3)]{DKN13}.

Moreover, from \eqref{eq2018-11-22-5}, we bound $C$ by $C_1 + C_2$, where for $i = 1, 2$,
\begin{align*}
C_i &\leq c \, \sum_{j = 1}^d\mathrm{E}\bigg[\Big(\int_{a_i}^{b_i}d\theta \int_{\mathbb{R}^k}d\eta \, S(\rho - \theta, x - \eta)\|D_{\cdot, *}u_j(\theta, \eta)\|_{\mathscr{H}^d_{s - \epsilon, s\wedge \theta}}\Big)^{2q}\bigg].
\end{align*}
Similar to the derivation of the first inequality in \eqref{eq2018-11-22-7}, we see that
\begin{align}\label{eq2018-11-22-12}
C_1 &\leq c\, \Big(\int_{s - \epsilon}^{s}d\theta \int_{\mathbb{R}^k}d\eta\, S(\rho - \theta, x - \eta)\Big)^{2q} = c\, \epsilon^{2q} \nonumber \\
&\leq c'\, \big((\rho - s + \epsilon)^{\frac{2 - \beta}{2}} - (\rho - s)^{\frac{2 - \beta}{2}}\big)^q \nonumber \\
 & = c'' \, \Big(\int_{s - \epsilon}^{s}d\theta \int_{\mathbb{R}^k}\mu(d\xi)|\mathscr{F}S(\rho - \theta, *)(\xi)|^2\Big)^q,
\end{align}
where the second inequality holds since by Lemma \ref{lemma2018-11-22-2}(a), $(\rho - s + \epsilon)^{\frac{2 - \beta}{2}} - (\rho - s)^{\frac{2 - \beta}{2}} \geq (T + \epsilon)^{\frac{2 - \beta}{2}} - T^{\frac{2 - \beta}{2}} \geq C \, \epsilon$ for all $\epsilon \in [0, T]$, and the second equality follows from \cite[Lemma 6.1]{DKN13}.
Applying H\"{o}lder's inequality with respect to the measure $S(\rho - \theta, x - \eta)d\theta d\theta$ and the Cauchy-Schwartz inequality,
\begin{align}\label{eq2018-11-22-10}
C_2 &\leq c \, \sum_{j = 1}^d\int_{s}^{\rho}d\theta \int_{\mathbb{R}^k}d\eta \, S(\rho - \theta, x - \eta)\sup_{x \in \mathbb{R}^k}\mathrm{E}\big[\|D_{\cdot, *}u_j(\theta, x)\|_{\mathscr{H}^d_{s - \epsilon, s\wedge \theta}}^{2q}\big] \nonumber \\
& = c \, \int_{s}^{\rho}d\theta\, \sum_{j = 1}^d\sup_{x \in \mathbb{R}^k}\mathrm{E}\big[\|D_{\cdot, *}u_j(\theta, x)\|_{\mathscr{H}^d_{s - \epsilon, s}}^{2q}\big].
\end{align}

Denote
\begin{align*}
\varphi(\theta) = \sum_{j = 1}^d\sup_{x \in \mathbb{R}^k}\mathrm{E}\big[\|D_{\cdot, *}u_j(\theta, x)\|_{\mathscr{H}^d_{s - \epsilon, s}}^{2q}\big].
\end{align*}
We deduce, from \eqref{eq2018-11-22-2}, \eqref{eq2018-11-22-3} and \eqref{eq2018-11-22-9}--\eqref{eq2018-11-22-10}, that
\begin{align*}
\varphi(\rho) & \leq c \, \Big(\int_{s - \epsilon}^{s}d\theta \int_{\mathbb{R}^k}\mu(d\xi)|\mathscr{F}S(\rho - \theta, *)(\xi)|^2\Big)^q + c\, \int_{s}^{\rho} (1 + (\rho - \theta)^{-\beta/2})\varphi(\theta)d\theta.
\end{align*}
By Gronwall's lemma (\cite[Lemma 15]{Dal99}), we conclude that for $\rho \geq s$,
\begin{align*}
\varphi(\rho) & \leq c \, \Big(\int_{s - \epsilon}^{s}d\theta \int_{\mathbb{R}^k}\mu(d\xi)|\mathscr{F}S(\rho - \theta, *)(\xi)|^2\Big)^q,
\end{align*}
which implies \eqref{eq2018-09-12-5}.
\end{proof}

\end{appendices}

\begin{small}

\vspace{1.5cm}

\noindent\textbf{Robert C. Dalang} and \textbf{Fei Pu.}
Institut de Math\'ematiques, Ecole Polytechnique
F\'ed\'erale de Lausanne, Station 8, CH-1015 Lausanne,
Switzerland.\\
Emails: \texttt{robert.dalang@epfl.ch} and \texttt{fei.pu@epfl.ch}\\
\end{small}
\end{document}